\tikzset{cd/.style=matrix of math nodes,row sep=2em,column sep=2em, text height=1.5ex, text depth=0.5ex}
\tikzset{cdar/.style=->,auto}
\tikzset{dar/.style={double,double equal sign distance,-implies}}
\tikzset{mid/.style={anchor=mid}} 
\tikzset{narrowfill/.style={inner sep=0pt, fill=white}}
\newcommand*{\meascor}[5][1]{\draw[cdar] (#2) --
  node[inner sep=#1pt] {\(\scriptstyle #3\)}
  node[inner sep=#1pt,swap] {\(\scriptstyle #4\)} (#5);}
\setlist[enumerate,1]{label=\textup{(\arabic*)}}
\setlist[enumerate,2]{label=\textup{(\alph*)}}
\renewcommand*{\MR}[1]{ \href{http://www.ams.org/mathscinet-getitem?mr=#1}{MR #1}}
\renewcommand{\PrintDOI}[1]{\href{http://dx.doi.org/#1}{DOI #1}}
\numberwithin{equation}{section}
\theoremstyle{plain}
\newtheorem{theorem}[equation]{Theorem}
\newtheorem{lemma}[equation]{Lemma}
\newtheorem{proposition}[equation]{Proposition}
\newtheorem{corollary}[equation]{Corollary}
\theoremstyle{definition}
\newtheorem{definition}[equation]{Definition}
\theoremstyle{remark}
\newtheorem{remark}[equation]{Remark}
\newtheorem{example}[equation]{Example}
\newcommand*{\braket}[2]{\langle#1 {\mid} #2\rangle}
\newcommand*{\alb}{\hspace{0pt}} 
\newcommand*{\Mult}{\mathcal M}
\newcommand*{\U}{\mathcal U}
\newcommand*{\UM}{\mathcal U}
\newcommand*{\Lt}{\mathcal L}
\newcommand*{\s}{s} 
\newcommand*{\rg}{r}
\newcommand{\C}{\mathbb{C}}
\newcommand{\N}{\mathbb{N}}
\newcommand{\R}{\mathbb{R}}
\newcommand*{\Hilm}[1][E]{\mathcal #1}
\newcommand*{\F}{\Hilm[F]}
\newcommand*{\Hils}[1][H]{\mathcal #1}
\newcommand*{\univ}{\mathrm u}
\newcommand*{\diff}{\mathrm d}
\newcommand*{\dd}{\,\mathrm d} 
\newcommand*{\Cont}{\mathrm C}
\newcommand*{\Contc}{\mathrm{C_c}}
\newcommand*{\Contb}{\mathrm{C_b}}
\newcommand*{\Star}{$^*$\nobreakdash-}
\newcommand*{\nb}{\nobreakdash}
\newcommand*{\Cst}{\mathrm C^*}
\newcommand*{\cstar}{\texorpdfstring{\(\Cst\)\nobreakdash-\hspace{0pt}}{*-}}
\newcommand*{\congto}{\xrightarrow\sim}
\newcommand*{\defeq}{\mathrel{\vcentcolon=}}
\newcommand{\Bound}{\mathbb B}
\newcommand*{\abs}[1]{\lvert#1\rvert}
\newcommand*{\norm}[1]{\lVert#1\rVert}
\newcommand*{\conj}[1]{\overline{#1}}
\newcommand*{\Id}{\mathrm{id}}
\newcommand*{\into}{\hookrightarrow}
\newcommand*{\onto}{\twoheadrightarrow}
\DeclareMathOperator{\Hom}{Hom} 
\DeclareMathOperator{\supp}{supp}
\DeclareMathOperator{\pr}{pr}
\DeclareMathOperator{\Bis}{Bis}
\begin{document}
\title[A universal property for groupoid C*-algebras. I]
{A universal property for\\ groupoid C*-algebras. I}

\author{Alcides Buss}
\email{alcides@mtm.ufsc.br}
\address{Departamento de Matem\'atica\\
 Universidade Federal de Santa Catarina\\
 88.040-900 Florian\'opolis-SC\\
 Brazil}

\author{Rohit Holkar}
\email{rohit.d.holkar@gmail.com}
\curraddr[Rohit Holkar]{Indian Institute of Science Education and
  Research, Dr.~Homi Bhabha Road, NCL Colony, Pashan, Pune 411008,
  India}

\author{Ralf Meyer}
\email{rmeyer2@uni-goettingen.de}
\address{Mathematisches Institut\\
  Georg-August Universität Göttingen\\
  Bunsenstraße 3--5\\
  37073 Göttingen\\
  Germany}

\keywords{groupoid; C*-algebra; representation; topological
  correspondence; integration of groupoid representations;
  disintegration; Haar system}

\subjclass[2010]{46L55, 22A22}
\thanks{Supported by CNPq/CsF (Brazil).  Part of this work has been
  done while the second and third authors were visiting the
  Department of Mathematics at the Federal University of Santa
  Catarina in Florianópolis, and they thank this institution for the
  hospitality.}

\begin{abstract}
  We describe representations of groupoid \cstar{}algebras on
  Hilbert modules over arbitrary \cstar{}algebras by a universal
  property.  For Hilbert space representations, our universal
  property is equivalent to Renault's Integration--Disintegration
  Theorem.  For a locally compact group, it is related to the
  automatic continuity of measurable group representations.  It
  implies known descriptions of groupoid \cstar{}algebras as crossed
  products for \'etale groupoids and transformation groupoids of group
  actions on spaces.
\end{abstract}
\maketitle

\section{Introduction}
\label{sec:intro}

The \cstar{}algebra of a locally compact group~\(G\) may be
characterised uniquely up to isomorphism by a universal property:
there is a natural bijection between nondegenerate
\Star{}homomorphisms \(\Cst(G)\to \Mult(D)\) --~briefly called
morphisms \(\Cst(G)\to D\)~-- and strictly continuous group
homomorphisms \(G\to \UM(D)\), where~\(D\) is a
\cstar{}algebra and~\(\UM(D)\) its group of unitary multipliers.
That this universal property characterises \(\Cst(G)\) is shown in
\cite{Williams:crossed-products}*{Theorem 2.61}, even in the more
general case of crossed products; Williams attributes this theorem
to Raeburn.

Now let~\(G\) be a locally compact, Hausdorff groupoid with a Haar
system~\(\alpha\).  Renault's Integration and Disintegration Theorems describe
the Hilbert space representations of the groupoid
\cstar{}algebra~\(\Cst(G,\alpha)\)
(see~\cite{Renault:Representations}).
The category of Hilbert space representations and intertwining
operators is not enough, however, to
determine a \(\Cst\)\nb-algebra uniquely up to isomorphism, compare
\cite{Rieffel:Morita}*{Example 1.4}.  This complicates
many arguments about groupoid \cstar{}algebras because the
dense \Star{}subalgebra \(\Contc(G)\) and other details of the
definition of~\(\Cst(G,\alpha)\) reappear in every argument.
This includes such technical matters as the automatic
boundedness of any \Star{}representation of \(\Contc(G)\) in the
\(I\)\nb-norm.

We are going to describe the representations of~\(\Cst(G,\alpha)\)
on Hilbert \emph{modules} over arbitrary \cstar{}algebras by a
universal property.  This universal property
determines~\(\Cst(G,\alpha)\) uniquely up to a canonical isomorphism
and should, therefore, simplify many arguments with groupoid
\cstar{}algebras.  We also prove the automatic boundedness
of densely defined \Star{}representations of \(\Contc(G)\), even
if~\(G\) is not second countable (Corollary~\ref{cor:prerep}).
This allows us to get rid of the second countability assumption in
the main result of~\cite{Muhly-Renault-Williams:Equivalence},
which says that Morita equivalent groupoids have Morita equivalent
\(\Cst\)\nb-algebras.

In particular, this result implies that the groupoid
\cstar{}algebras for different Haar systems on the same groupoid
are canonically Morita--Rieffel equivalent.  It is unclear,
however, whether they are isomorphic, and there certainly is no
\emph{natural} isomorphism between them.  Hence a universal
property for groupoid \(\Cst\)\nb-algebras must contain the Haar
system.  This entails some complications.  To show how our
universal property can be used, we apply it to two special cases,
namely, étale groupoids and transformation groupoids of group
actions.  We describe their representations and thus their groupoid
\cstar{}algebras.  This implies that the groupoid \cstar{}algebra of
an étale groupoid or a transformation groupoid for a group action is
a crossed product for an inverse semigroup action or a group action,
respectively.  This description comes with a universal property that
describes representations on Hilbert modules as well as Hilbert
spaces.
Hilbert modules are powerful objects, and some proofs of
disintegration theorems already use them.  For instance,
\cite{Androulidakis-Skandalis:Holonomy} proves a disintegration
theorem for Hilbert space representations of holonomy groupoids of
singular foliations.  Our techniques are, however, quite different
from those in~\cite{Androulidakis-Skandalis:Holonomy}.

Our universal property uses the commutative \cstar{}algebras of
functions on the spaces of objects, arrows, and composable pairs of
arrows in~\(G\).  Therefore, as it stands, it only works for
Hausdorff groupoids.  The non-Hausdorff case may be treated by
desingularising a non-Hausdorff, locally compact groupoid to a
Hausdorff, locally compact bigroupoid.  There is also a
variant where we add Fell bundles, even non-saturated ones.  The
most general version of the universal property applies to
non-saturated Fell bundles over bigroupoids, which we view as
partial actions of bigroupoids by Hilbert bimodules (partial
Morita--Rieffel equivalences).  Since both Fell bundles and
bigroupoids create further technical complications, we discuss them
only later, in sequels to this article.

When we combine our universal property with the representation theory
of commutative \cstar{}algebras
on separable Hilbert spaces, the resulting description of
representations of groupoid \cstar{}algebras
on separable Hilbert spaces is equivalent to Renault's
Integration--Disintegration Theorem.  Besides the Haar system on the
groupoid, our universal property does not involve any measure theory
because this would fail for representations on Hilbert
\emph{modules}: direct integral decompositions need not
exist in this case, see Remark~\ref{rem:no_disintegration_module}.
In fact, our universal property works for arbitrary (non-separable)
groupoid \cstar{}algebras and only involves rather soft analysis.
This is compensated by appropriate algebraic structures.  For a
locally compact group~\(G\), our universal property for Hilbert
space representations gives Haar-measurable weak representations, that
is, Haar-measurable maps \(g\mapsto U_g\) from~\(G\) to the unitary
group such that \(U_g U_h = U_{g h}\) holds for almost all \((g,h)\in
G^2\) with respect to the Haar measure.  Together with the usual
universal property for group \cstar{}algebras, this shows that any
Haar-measurable weak group representation is equal almost everywhere
to a continuous group representation (see
Corollary~\ref{cor:automatic-continuity}).
Similar automatic continuity results for group representations go
back to Stefan Banach and André
Weil (see \cite{Rosendal:Automatic}).

Throughout this article, we let~\(G\) be a locally compact,
Hausdorff groupoid with a Haar system, which we denote
by~\(\alpha\).  Let \(G^0\), \(G^1\) and~\(G^2\) be its spaces
of objects, arrows and composable pairs of arrows, and let
\(\rg,\s\colon G^1\rightrightarrows G^0\) be its range and source
maps.  We recall in Section~\ref{sec:families_correspondences} how
to construct \cstar{}correspondences between commutative
\cstar{}algebras such as \(\Cont_0(G^i)\) for \(i=0,1,2\) from
topological correspondences between the underlying spaces.  We
construct some \cstar{}correspondences of this type from
families of measures along canonical maps
\(G^2 \mathrel{\substack{\textstyle\rightarrow\\[-0.6ex]
    \textstyle\rightarrow \\[-0.6ex]
    \textstyle\rightarrow}}
G^1 \mathrel{\substack{\textstyle\rightarrow\\[-0.6ex]
    \textstyle\rightarrow}}
G^0\).
Using these \cstar{}correspondences, we formulate our universal
property in Section~\ref{sec:universal_groupoid_Hausdorff}.  We
illustrate it by the example of the regular representation and
relate it to the Integration and Disintegration Theorems of
Renault~\cite{Renault:Representations}.  For the universal property,
we define ``representations'' of~\((G,\alpha)\) on Hilbert modules.
Our main theorem asserts that these representations are equivalent
to representations of the groupoid \cstar{}algebra.  We describe
how to integrate and disintegrate representations in Sections
\ref{sec:integrate} and~\ref{sec:disintegrate}, and we show that
both constructions are inverse to each other in
Section~\ref{sec:back_and_forth}.  This section finishes the proof
of the universal property.  Section~\ref{sec:trafo_group_etale}
specialises to transformation groups and \'etale groupoids.

\section{Continuous families of measures and topological correspondences}
\label{sec:families_correspondences}

Our universal property is based on canonical
\cstar{}correspondences
between the commutative \cstar{}algebras
\(\Cont_0(G^i)\)
for \(i=0,1,2\).
We are going to describe a general procedure to construct
\cstar{}correspondences between commutative \cstar{}algebras.  The
\cstar{}correspondences
between \(\Cont_0(G^i)\) that we need are all of this form.

A \cstar{}correspondence from a \cstar{}algebra~\(A\)
to another \cstar{}algebra~\(D\)
consists of a (right) Hilbert \(D\)\nb-module~\(\Hilm[F]\)
with a nondegenerate \Star{}\hspace{0pt}homomorphism~\(\varphi\) from~\(A\)
to~\(\Bound(\Hilm[F])\),
the \cstar{}algebra of adjointable operators on~\(\Hilm[F]\).
We view a \cstar{}correspondence from~\(A\)
to~\(D\)
as an arrow \(A\to D\)
and usually write \(A\xrightarrow{\Hilm[F]} D\).
We also view~\(\varphi\)
as a \emph{representation} of~\(A\)
on~\(\Hilm[F]\).
Two \cstar{}correspondences
\(\Hilm[F_1]\)
and~\(\Hilm[F_2]\)
from~\(A\)
to~\(D\)
are \emph{isomorphic} if there is a unitary bimodule map
\(U\colon \Hilm[F_1] \congto \Hilm[F_2]\).

We write~\(\otimes\)
for suitably completed tensor products of \cstar{}correspondences,
and~\(\odot\)
for the tensor product of vector spaces without any completion.
In particular, the composite of two \cstar{}correspondences
\(A\xrightarrow{\Hilm} B\) and \(B\xrightarrow{\Hilm[F]} D\) is their
(balanced) tensor product~\(\Hilm\otimes_B\Hilm[F]\)
(see~\cite{Lance:Hilbert_modules}*{Chapter~4}). This is
the completion of the algebraic (balanced) tensor product
\(\Hilm\odot_B\Hilm[F]\) with respect to the \(D\)\nb-valued inner product
\[
\braket{\xi_1\otimes\eta_1}{\xi_2\otimes \eta_2}\defeq
\braket{\eta_1}{\varphi(\braket{\xi_1}{\xi_2})\eta_2},
\]
where \(\varphi\colon B\to \Bound(\Hilm[F])\) is the underlying
homomorphism that gives the left \(B\)\nb-module structure of~\(\Hilm[F]\);
the notation \(\Hilm\otimes_\varphi\Hilm[F]\) is used instead of
\(\Hilm\otimes_B\Hilm[F]\) to highlight~\(\varphi\).

Let \(X\)
and~\(Y\)
be locally compact, Hausdorff spaces and let \(f\colon X\to Y\)
be a continuous map with a continuous family~\(\lambda\)
of measures~\(\lambda_y\)
along the fibres~\(f^{-1}(y)\)
of~\(f\)
(such families are called \emph{\(f\)\nb-systems} in
\cite{Renault:Representations}*{Section~1}).  Thus
each~\(\lambda_y\)
is a positive Radon measure on~\(X\)
with \(\supp(\lambda_y)\subseteq f^{-1}(y)\).
The continuity of~\(\lambda\)
means that the integration map
\[
\lambda\colon \Contc(X) \to \Contc(Y),\qquad
\lambda(\varphi)(y) = \int_X \varphi(x)\dd\lambda_y(x),
\]
takes values in~\(\Contc(Y)\).

\begin{definition}
  \label{def:corr_from_measures}
  We equip
  \(\Contc(X)\) with the \(\Cont_0(X)\)-\(\Cont_0(Y)\)\nb-bimodule
  structure
  \[
  (\varphi_1\cdot \varphi_2\cdot \varphi_3)(x) \defeq
  \varphi_1(x)\varphi_2(x)\varphi_3(f(x))
  \]
  for \(\varphi_1\in\Cont_0(X)\),
  \(\varphi_2\in\Contc(X)\),
  \(\varphi_3\in\Cont_0(Y)\)
  and with the \(\Cont_0(Y)\)-valued
  inner product
  \(\braket{\xi}{\eta} \defeq \lambda \bigl(\conj{\xi}\cdot
  \eta\bigr)\), that is,
  \[
  \braket{\xi}{\eta}(y)
  \defeq \int_{f^{-1}(y)} \conj{\xi(x)}\cdot \eta(x)
  \dd\lambda_y(x).
  \]
  Then \(\Contc(X)\)
  is a pre-\alb{}Hilbert \(\Cont_0(Y)\)-module
  with a nondegenerate representation of~\(\Cont_0(X)\)
  by adjointable operators.  Some nonzero \(\xi\in \Contc(X)\)
  might have \(\braket{\xi}{\xi}=0\)
  unless we assume~\(\lambda_y\)
  to have full support.  Let \(\Lt^2(X,f,\lambda)\)
  be the Hausdorff completion of~\(\Contc(X)\)
  for this inner product, which is a \cstar{}correspondence
  from~\(\Cont_0(X)\)
  to~\(\Cont_0(Y)\).
  In diagrams, we often briefly denote this \cstar{}correspondence
  as
  \[
  \Cont_0(X) \xrightarrow[\lambda]{f} \Cont_0(Y).
  \]
\end{definition}


Any Hilbert module over a commutative
\cstar{}algebra~\(\Cont_0(Y)\) is isomorphic to the section space
of a continuous field of Hilbert spaces over~\(Y\), see
\cite{Weaver:book}*{Section~9.1}.  This is an equivalence of
categories, that is, there is a functorial bijection between
unitary operators between two Hilbert \(\Cont_0(Y)\)-modules and
continuous families of unitary operators between the corresponding
continuous fields of Hilbert spaces over~\(Y\).  The continuous
field of Hilbert spaces associated to \(\Lt^2(X,f,\lambda)\) has
the fibre \(\Lt^2(f^{-1}(y),\lambda_y)\) at \(y\in Y\), and its
\(\Cont_0\)\nb-sections are generated by~\(\Contc(X)\); here
\(\xi\in \Contc(X)\) is identified with the section \(y\mapsto
\xi|_{f^{-1}(y)}\).

Let \(f\colon X\to Y\) and \(g\colon Y\to Z\) be continuous maps
with continuous families of measures \(\lambda\) and~\(\mu\),
respectively.  Then the composite integration map
\begin{equation}
  \mu\circ\lambda\colon \Contc(X)\to\Contc(Y)\to\Contc(Z),\qquad
  (\mu\circ\lambda)(\varphi)(z)
  = \int_Y \int_X \varphi(x)\dd\lambda_y(x)\dd\mu_z(y),
\end{equation}
describes a continuous family of measures \(\mu\circ\lambda\) along
\(g\circ f\).

\begin{lemma}
  \label{lem:compose_measure_family}
  The map
  \[
  \gamma\colon \Contc(X)\odot \Contc(Y) \to \Contc(X),\qquad
  \gamma(\varphi\otimes\psi)(x) \defeq \varphi(x)\cdot\psi(f(x)),
  \]
  extends uniquely to an isomorphism of
  \(\Cont_0(X)\)-\(\Cont_0(Z)\)-correspondences
  \[
  \bar\gamma\colon \Lt^2(X,f,\lambda) \otimes_{\Cont_0(Y)} \Lt^2(Y,g,\mu)
  \congto \Lt^2(X,g\circ f,\mu\circ \lambda).
  \]
\end{lemma}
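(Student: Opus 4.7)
The plan is to verify that $\gamma$ is $\Cont_0(Y)$-balanced, preserves the $\Cont_0(Z)$-valued inner product, is a bimodule map, and has dense range; then the extension $\bar\gamma$ to the Hausdorff completion is automatically a unitary bimodule isomorphism of \cstar{}correspondences.

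First I would check that $\gamma$ descends to the balanced tensor product. For $h\in \Cont_0(Y)$, both $\gamma((\varphi\cdot h)\otimes \psi)$ and $\gamma(\varphi\otimes (h\cdot \psi))$ equal the function $x\mapsto \varphi(x) h(f(x))\psi(f(x))$, so $\gamma$ factors through $\Contc(X)\odot_{\Cont_0(Y)} \Contc(Y)$. Next, I would verify the inner product identity: unpacking the definition of the tensor product inner product and using commutativity of $\Cont_0(Y)$,
\[
\braket{\varphi_1\otimes\psi_1}{\varphi_2\otimes\psi_2}(z)
= \int_Y \conj{\psi_1(y)}\,\psi_2(y)\int_X \conj{\varphi_1(x)}\,\varphi_2(x)\dd\lambda_y(x)\dd\mu_z(y).
\]
Because $\lambda_y$ is supported in $f^{-1}(y)$, the value $\psi_i(f(x))$ under the inner $\lambda_y$-integral equals $\psi_i(y)$, so the right-hand side coincides with
\[
\int_X \conj{\gamma(\varphi_1\otimes\psi_1)(x)}\,\gamma(\varphi_2\otimes\psi_2)(x)\dd(\mu\circ\lambda)_z(x)
=\braket{\gamma(\varphi_1\otimes\psi_1)}{\gamma(\varphi_2\otimes\psi_2)}(z).
\]
Since $\gamma$ is isometric on the algebraic tensor product (with the relevant seminorms), it descends to an isometry $\bar\gamma$ between Hausdorff completions, i.e., between the \cstar{}correspondences in question.

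The bimodule property is immediate from the definition: for $h\in\Cont_0(X)$ and $h'\in\Cont_0(Z)$,
\[
\gamma\bigl((h\cdot\varphi)\otimes(\psi\cdot h')\bigr)(x)
= h(x)\varphi(x)\psi(f(x))h'(g(f(x)))
= h(x)\,\gamma(\varphi\otimes\psi)(x)\,h'((g\circ f)(x)).
\]
To get that $\bar\gamma$ is surjective, it suffices to show its image is dense in $\Lt^2(X,g\circ f,\mu\circ\lambda)$, and for this I would show the image already contains $\Contc(X)$: given $\varphi\in\Contc(X)$ with compact support $K$, the set $f(K)\subseteq Y$ is compact, so by Urysohn's lemma there is $\psi\in\Contc(Y)$ equal to~$1$ on $f(K)$; then $\gamma(\varphi\otimes\psi)=\varphi$ pointwise. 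As $\Contc(X)$ is dense in $\Lt^2(X,g\circ f,\mu\circ\lambda)$ by construction, $\bar\gamma$ is surjective, hence a unitary isomorphism.

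The only subtle point is the manipulation of the iterated integral, which is justified by Fubini for the continuous family $\mu\circ\lambda$ and by the support condition on $\lambda_y$; everything else is a routine check, so no serious obstacle is expected.
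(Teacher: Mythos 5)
Your proposal is correct and follows essentially the same route as the paper's proof: verify that $\gamma$ preserves the inner products by unwinding the iterated integral and using $\supp(\lambda_y)\subseteq f^{-1}(y)$, note it is a bimodule map, and obtain surjectivity by writing any $\varphi'\in\Contc(X)$ as $\gamma(\varphi'\otimes\psi)$ with $\psi\in\Contc(Y)$ equal to~$1$ on $f(\supp\varphi')$. The extra checks you include (the $\Cont_0(Y)$-balancing and the Fubini remark) are harmless refinements of the same argument.
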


\begin{proof}
  The inner products defining \(\Lt^2(X,f,\lambda)
  \otimes_{\Cont_0(Y)} \Lt^2(Y,g,\mu)\) and \(\Lt^2(X,g\circ
  f,\mu\circ \lambda)\) are preserved by~\(\gamma\) because
  \begin{align*}
    \braket{\gamma(\varphi_1\otimes\psi_1)}{\gamma(\varphi_2\otimes\psi_2)}(z)
    &= \int_{g^{-1}(z)} \int_{f^{-1}(y)} \conj{\gamma(\varphi_1\otimes\psi_1)(x)}
    \gamma(\varphi_2\otimes\psi_2)(x) \dd\lambda_y(x)\dd\mu_z(y)
    \\&= \int_{g^{-1}(z)} \int_{f^{-1}(y)} \conj{\varphi_1(x)\psi_1(y)}
    \varphi_2(x)\psi_2(y) \dd\lambda_y(x)\dd\mu_z(y)
    \\&= \braket{\psi_1}{\braket{\varphi_1}{\varphi_2}\cdot\psi_2}(z)
  \end{align*}
  for all \(z\in Z\).
  Hence~\(\gamma\)
  extends to an isometry
  \[
  \bar\gamma\colon \Lt^2(X,f,\lambda) \otimes_{\Cont_0(Y)}
  \Lt^2(Y,g,\mu)
  \to \Lt^2(X,g\circ f,\mu\circ \lambda).
  \]
  This is clearly a bimodule map.  It is surjective (and hence an
  isomorphism of correspondences)
  because~\(\gamma\colon \Contc(X)\odot \Contc(Y) \to \Contc(X)\)
  is already surjective: any \(\varphi'\in \Contc(X)\)
  may be decomposed as \(\varphi'(x)=\varphi(x)\psi(f(x))\)
  by taking \(\psi\in \Contc(Y)\)
  with \(\psi(y)=1\)
  for \(y\in f(\supp(\varphi'))\) and \(\varphi\defeq \varphi'\).
\end{proof}

The compositions for measure families and \cstar{}correspondences
are compatible by Lemma~\ref{lem:compose_measure_family}.  In our
brief notation, this means that the following diagram of
\cstar{}correspondences commutes up to the canonical
isomorphism~\(\gamma\):
\begin{equation}
  \label{eq:compose_measure_family}
  \begin{tikzpicture}[baseline=(current bounding box.west)]
    \matrix (m) [cd,column sep=4em, row sep=2.5em] {
      \Cont_0(X)&
      \Cont_0(Y)\\
      &\Cont_0(Z)\\
    };
    \meascor{m-1-1}{f}{\lambda}{m-1-2};
    \meascor{m-1-2}{g}{\mu}{m-2-2};
    \meascor{m-1-1}{g\circ f}{\mu\circ\lambda}{m-2-2};
  \end{tikzpicture}
\end{equation}

\begin{definition}
  Let \(f\colon X\to Y\) (``forward'') be a continuous map with a
  continuous family of measures~\(\lambda\) and let \(b\colon X\to Z\)
  (``backward'') be a continuous map.  We define a
  \(\Cont_0(Z)\)-module structure on~\(\Contc(X)\) by \((\varphi\cdot
  \psi)(x) \defeq \varphi(b(x))\cdot \psi(x)\) for
  \(\varphi\in\Cont_0(Z)\), \(\psi\in\Contc(X)\).  This extends to a
  representation of~\(\Cont_0(Z)\) on the Hilbert
  \(\Cont_0(Y)\)\nb-module \(\Lt^2(X,f,\lambda)\), turning it into
  a \cstar{}correspondence from~\(\Cont_0(Z)\)
  to~\(\Cont_0(Y)\).  We denote it by \(b^*\Lt^2(X,f,\lambda)\).
  We call a pair of maps \(Z \xleftarrow{b} X
  \xrightarrow{f} Y\) with a continuous family of measures~\(\lambda\)
  along~\(f\) a \emph{topological correspondence} from~\(Z\)
  to~\(Y\).
\end{definition}

In particular, the \(\Cont_0(X)\)-\(\Cont_0(Y)\)\nb-correspondence
\(\Lt^2(X,f,\lambda)\) built in
Definition~\ref{def:corr_from_measures} from a continuous family of
measures~\(\lambda\) along a continuous map \(f\colon X\to Y\) is
associated to the topological correspondence
\[
X\xleftarrow{\Id_X} X \xrightarrow[\lambda]{f} Y.
\]

Topological correspondences are a mild generalisation of the
topological quivers introduced by Muhly and
Tomforde~\cite{Muhly-Tomforde:Quivers}: a topological quiver is a
topological correspondence with the same source and target space.
Basic results about topological quivers such as
\cite{Muhly-Tomforde:Quivers}*{Lemmas 6.1--4} have obvious
generalisations to topological correspondences.  We also get the
notion of a topological correspondence if we specialise the
topological correspondences between locally compact, Hausdorff
groupoids introduced in~\cite{Holkar:Thesis} to locally compact
spaces.  

Topological correspondences may be composed by a fibre product
construction, and this composition and the interior tensor product of
\cstar{}correspondences
are compatible up to a canonical isomorphism, see
\cite{Muhly-Tomforde:Quivers}*{Lemmas 6.1--4} or~\cite{Holkar:Thesis}.
We give more details.  Let \(X\),
\(Y\)
and~\(Z\)
be locally compact spaces.  Let \((V,b_V,f_V,\lambda)\)
and \((W,b_W,f_W,\mu)\)
be topological correspondences from~\(X\)
to~\(Y\)
and from~\(Y\)
to~\(Z\),
respectively.  Their composite topological correspondence is
the fibre product
\[
V\times_{f_V,Y,b_W}W\defeq \{(v,w)\in V\times W: f_V(v)=b_W(w)\}
\]
with the maps \(b\defeq b_V\circ \pr_1\)
and \(f\defeq f_W\circ\pr_2\),
respectively, where~\(\pr_i\)
is the projection from \(V\times_{f_V,Y,b_W}W\)
to the \(i\)th
factor; the family of measures \(\lambda\times\mu\)
on \(V\times_{f_V,Y,b_W}W\) is defined by
\[
\int_{V\times_{f_V,Y,b_W}W} \varphi \dd(\lambda\times\mu)_z
\defeq \int_{f_W^{-1}(z)}\int_{f_V^{-1}(b_W(w))}
\varphi(v,w) \dd\lambda_{b_W(w)}(v) \dd\mu_z(w)
\]
for \(\varphi\in \Contc(V\times_{f_V,Y,b_W}W)\) and \(z\in Z\).

\begin{proposition}
  \label{prop:functoriality-of-corr}
  The canonical map
  \[
  \gamma\colon\Contc(V)\odot\Contc(W)\to\Contc(V\times_{f_V,Y,b_W}W),
  \qquad
  \gamma(\varphi\otimes \psi)(v,w)\defeq \varphi(v)\psi(w),
  \]
  extends to an isomorphism
  \[
  b_V^* \Lt^2(V,f_V,\lambda)\otimes_{\Cont_0(Y)}
  b_W^* \Lt^2(W,f_W,\mu) \congto
  b^* \Lt^2(V\times_{f_V,Y,b_W}W,f,\lambda\times\mu)
  \]
  of \cstar{}correspondences from \(\Cont_0(X)\) to \(\Cont_0(Z)\).
\end{proposition}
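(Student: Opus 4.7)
The plan is to adapt the proof of Lemma~\ref{lem:compose_measure_family}, adding the bookkeeping needed to handle the two backward maps \(b_V\) and \(b_W\). The proof splits into three parts: \((a)\) \(\gamma\) preserves the \(\Cont_0(Z)\)\nobreakdash-valued inner products; \((b)\) \(\gamma\) is a \(\Cont_0(X)\)\nobreakdash-\(\Cont_0(Z)\)\nobreakdash-bimodule map; \((c)\) the image of \(\gamma\) is dense in the Hilbert-module norm.

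First I would verify \((a)\) by a Fubini computation, using the definition of \(\lambda\times\mu\):
\begin{align*}
\braket{\gamma(\varphi_1\otimes\psi_1)}{\gamma(\varphi_2\otimes\psi_2)}(z)
&= \int_{f_W^{-1}(z)}\int_{f_V^{-1}(b_W(w))} \conj{\varphi_1(v)\psi_1(w)}\,\varphi_2(v)\psi_2(w)\dd\lambda_{b_W(w)}(v)\dd\mu_z(w)\\
&= \int_{f_W^{-1}(z)} \conj{\psi_1(w)}\,\braket{\varphi_1}{\varphi_2}(b_W(w))\,\psi_2(w)\dd\mu_z(w)
= \braket{\psi_1}{\braket{\varphi_1}{\varphi_2}\cdot\psi_2}(z),
\end{align*}
which is exactly the inner product defining the balanced tensor product \(b_V^*\Lt^2(V,f_V,\lambda)\otimes_{\Cont_0(Y)} b_W^*\Lt^2(W,f_W,\mu)\). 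Part \((b)\) is immediate from the identities \(b=b_V\circ\pr_1\) and \(f=f_W\circ\pr_2\): the left \(\Cont_0(X)\)\nobreakdash-action on the tensor product is the action via \(b_V\) on the first tensorand, which under \(\gamma\) becomes the action via \(b\) on the fibre product, and analogously for the right \(\Cont_0(Z)\)\nobreakdash-action via \(f_W\). Together \((a)\) and \((b)\) show that \(\gamma\) extends to an isometric bimodule map~\(\bar\gamma\) from the Hausdorff completion of the algebraic balanced tensor product to \(b^*\Lt^2(V\times_{f_V,Y,b_W}W,f,\lambda\times\mu)\).

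The remaining step~\((c)\) is that \(\bar\gamma\) is surjective; I expect this to be the main obstacle, because unlike in Lemma~\ref{lem:compose_measure_family}, a function on the fibre product cannot in general be written as a \emph{finite} sum of separable products \(\varphi(v)\psi(w)\), so we only get density rather than surjectivity at the \(\Contc\)\nobreakdash-level. To handle this, I would apply the locally compact Stone--Weierstrass theorem: the linear span \(\Cat[A]\) of functions of the form \((v,w)\mapsto\varphi(v)\psi(w)\), with \(\varphi\in\Contc(V)\), \(\psi\in\Contc(W)\), is a conjugation-closed subalgebra of \(\Cont_0(V\times_{f_V,Y,b_W}W)\) that separates points and vanishes nowhere, so \(\Cat[A]\) is uniformly dense in \(\Cont_0(V\times_{f_V,Y,b_W}W)\). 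A cutoff argument then upgrades this to density in \(\Contc(V\times_{f_V,Y,b_W}W)\) in the inductive limit topology: given \(h\in\Contc(V\times_{f_V,Y,b_W}W)\) with compact support \(K\), one chooses \(\eta_V\in\Contc(V)\) and \(\eta_W\in\Contc(W)\) that are~\(1\) on the projections of~\(K\) and approximates~\(h\) uniformly by an element \(h'\in\Cat[A]\); the product \(\eta_V(v)\eta_W(w)\cdot h'(v,w)\) then lies in \(\Cat[A]\), has support in a fixed compact set independent of the approximation, and approximates~\(h\) uniformly. Since the Hilbert-module norm is dominated by a constant times the supremum norm on functions with support in any fixed compact set, this yields density in the target Hilbert module and finishes the proof.
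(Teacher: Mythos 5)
Your proposal is correct and follows essentially the same route as the paper: the same Fubini computation for the inner products, the same bimodule-map observation, and surjectivity via density of the span of product functions \(\varphi(v)\psi(w)\). The only cosmetic difference is that you run Stone--Weierstra\ss{} and the cutoff argument directly on the fibre product, whereas the paper invokes density of product functions in \(\Contc(V\times W)\) in the inductive limit topology and then restricts to the (closed) fibre product; both correctly reduce to the fact that the Hilbert-module norm is controlled by the sup-norm on functions supported in a fixed compact set.
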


\begin{proof}
  A direct computation shows that~\(\gamma\)
  is a bimodule map.  To see that~\(\gamma\)
  preserves the inner products, take
  \(\varphi_1,\varphi_2\in \Contc(V)\)
  and \(\psi_1,\psi_2\in \Contc(W)\).  Then
  \begin{align*}
    \braket{\varphi_1\otimes\psi_1}{\varphi_2\otimes\psi_2}(z)
    &= \braket{\psi_1}{\braket{\varphi_1}{\varphi_2}\cdot\psi_2}(z)
    \\&= \int_{f_W^{-1}(z)}\int_{f_V^{-1}(b_W(w))}
    \conj{\psi_1(w)}\conj{\varphi_1(v)}
    \varphi_2(v)\psi_2(w)\dd\lambda_{b_W(w)}(v)\dd\mu_z(w)
    \\&=\braket{\gamma(\varphi_1\otimes\psi_1)}
    {\gamma(\varphi_2\otimes\psi_2)}(z).
  \end{align*}
  Thus~\(\gamma\)
  is an isometric bimodule map.  The subspace of \(\Contc(V\times W)\)
  spanned by functions of the form \((v,w)\mapsto \varphi(v)\psi(w)\)
  is dense in the inductive limit topology.  Hence restrictions of
  such functions to \(V\times_{f_V,Y,b_W}W\)
  are linearly dense in \(\Contc(V\times_{f_V,Y,b_W}W)\),
  which is dense in
  \(\Lt^2(V\times_{f_V,Y,b_W}W,f,\lambda\times\mu)\).
  Thus~\(\gamma\) is surjective.
\end{proof}

An \emph{isomorphism} between two topological correspondences
\[
X\xleftarrow{b_i} Z_i\xrightarrow[\lambda_i]{f_i} Y,\qquad
i=1,2,
\]
is a homeomorphism \(\Phi\colon Z_1\congto Z_2\) that
satisfies \(f_2\circ\Phi=f_1\), \(b_2\circ\Phi=b_1\) with a
continuous function \(\delta\colon Z_2 \to \R_{>0}\) such that
\(\lambda_2 = \delta\cdot \Phi_*(\lambda_1)\), that is,
\(\lambda_{2,y} = \delta\cdot \Phi_*(\lambda_1)_y\) for all \(y\in
Y\).  We call~\(\delta\) an \emph{equivalence} from
\(\Phi_*(\lambda_1)\) to \(\lambda_2\).  The restriction
of~\(\delta\) to a fibre \(f_2^{-1}(y)\subseteq Z_2\) is a
Radon--Nikodym derivative for the measures \(\lambda_{2,y}\)
and~\(\Phi_*(\lambda_1)_y\).  This is unique up to equality
almost everywhere.  Therefore, if \(\lambda_1\) and~\(\lambda_2\)
have full support and a function~\(\delta\) as above exists, then
it is unique.  An isomorphism as above induces a unitary
\(\Cont_0(X)\)-\(\Cont_0(Y)\)-bimodule isomorphism
\[
\Phi_*\colon \Contc(Z_1)\to \Contc(Z_2),\qquad
\Phi_*(g)(z) \defeq
g\circ \Phi^{-1}(z)\, \delta(z)^{-1/2}.
\]
It extends to an isomorphism of \cstar{}correspondences
\[
\Phi_*\colon b_1^* \Lt^2(Y,f_1,\lambda_1)\congto
b_2^* \Lt^2(Y,f_2,\lambda_2).
\]

\begin{remark}
  We may weaken the continuity assumptions on \(\Phi^{\pm1}\)
  and~\(\delta\).  For instance, if~\(Y\) is just a point, then it
  suffices to assume~\(\Phi^{\pm1}\) to be measurable.  Then we
  take~\(\delta\) to be the Radon--Nikodym derivative as above,
  which is automatically measurable.  In general, we need
  \(\Phi^{\pm1}\) and~\(\delta\) to be measurable in the fibre
  directions and continuous along the base~\(Y\).  We do not try
  to make this precise here because we shall only use continuous
  isomorphisms as defined above.
\end{remark}

\section{The universal property for groupoid \texorpdfstring{$\Cst$}{C*}-algebras}
\label{sec:universal_groupoid_Hausdorff}

Let~\(\alpha\) be a left-invariant Haar system on~\(G\).  This is a
continuous family of measures with full support along the fibres
\(G^x \defeq \{g\in G\mid \rg(g)=x\}\) of the range map \(\rg\colon
G^1\onto G^0\).  The right-invariant Haar system~\(\tilde{\alpha}\)
corresponding to~\(\alpha\) is the push-forward
of~\(\alpha\) along the inversion map.  So it is a continuous family
of measures along the fibres \(G_x \defeq \{g\in G\mid \s(g)=x\}\)
of the source map \(\s\colon G^1\onto G^0\):
\begin{align*}
  (\alpha f)(x) &= \int_{G^x} f(g)\dd\alpha^x(g),\\
  (\tilde{\alpha} f)(x) &= \int_{G_x} f(g)\dd\tilde{\alpha}_x(g)
  = \int_{G^x} f(g^{-1})\dd\alpha^x(g).
\end{align*}

Besides the range and source maps \(\rg,\s\colon G^1\onto G^0\), we
shall also use the three maps \(d_0,d_1,d_2\colon G^2\onto G^1\)
defined by
\[
d_0(g,h) = h,\qquad
d_1(g,h) = g\cdot h,\qquad
d_2(g,h) = g
\]
for \(g,h\in G^1\) with \(\s(g)=\rg(h)\).  The composite maps are
\begin{equation}
  \label{eq:vertex_maps}
  v_0 = \rg\circ d_1 = \rg\circ d_2,\qquad
  v_1 = \rg\circ d_0 = \s\circ d_2,\qquad
  v_2 = \s\circ d_0 = \s\circ d_1.
\end{equation}
These maps are illustrated in Figure~\ref{fig:triangle_arrows}.
\begin{figure}[htbp]
  \begin{tikzpicture}[scale=1.8]
    \node (v1) at (90:1) {\(v_1\)};
    \node (v0) at (210:1) {\(v_0\)};
    \node (v2) at (330:1) {\(v_2\)};
    \draw[cdar] (v1) -- node[swap] {\(d_2(g,h)=g\)} (v0);
    \draw[cdar] (v2) -- node {\(d_1(g,h)=gh\)} (v0);
    \draw[cdar] (v2) -- node[swap] {\(d_0(g,h)=h\)} (v1);
  \end{tikzpicture}
  \caption{A pair \((g,h)\in G^2\) of composable arrows generates a
    commutative triangle of arrows in~\(G\).  We number
    the edges so that the one opposite the vertex~\(v_i\) is
    denoted by~\(d_i\) for \(i=0,1,2\).}
  \label{fig:triangle_arrows}
\end{figure}

Our continuous families of measures \(\tilde\alpha\) and~\(\alpha\)
along \(\s\) and~\(\rg\) induce continuous families \(\lambda_0\),
\(\lambda_1\) and~\(\lambda_2\) along the maps \(d_0\), \(d_1\)
and~\(d_2\), which we describe through their integration maps
\(\Contc(G^2)\to\Contc(G^1)\):
\begin{align}
  \label{eq:lambda_definition0}
  (\lambda_0 f)(h) &=
  \int_{G_{\rg(h)}} f(g,h) \dd\tilde{\alpha}_{\rg(h)}(g),\\
  \label{eq:lambda_definition1}
  (\lambda_1 f)(k) &=
  \int_{G^{\rg(k)}} f(g,g^{-1}k) \dd\alpha^{\rg(k)}(g)
  = \int_{G_{\s(k)}} f(kh^{-1},h) \dd\tilde{\alpha}_{\s(k)}(h),\\
  \label{eq:lambda_definition2}
  (\lambda_2 f)(g) &=
  \int_{G^{\s(g)}} f(g,h) \dd\alpha^{\s(g)}(h).
\end{align}
Equation~\ref{eq:lambda_definition1} uses the
substitution \(h=g^{-1}k\), which transforms the measures as
asserted because~\(\alpha\) is left invariant and \(\tilde{\alpha}\)
is obtained from~\(\alpha\) by the substitution \(g\mapsto g^{-1}\).

Each map in~\eqref{eq:vertex_maps} comes with a fixed
continuous family of measures.  As the following computations
show, each identity in~\eqref{eq:vertex_maps} corresponds
to an identity of measure families or, equivalently, integration
maps:
\begin{align}
  \label{eq:vertex_maps_integrals0}
  (\alpha\circ \lambda_1) f(x)
  &= \int_{G^x} \int_{G^x} f(g,g^{-1} k)
  \dd\alpha^x(g)\dd\alpha^x(k)\\
  \notag &= \int_{G^x} \int_{G^x} f(g,h)
  \dd\alpha^{\s(g)}(h)\dd\alpha^x(g)
  = (\alpha\circ \lambda_2) f(x),\\
  \label{eq:vertex_maps_integrals1}
  (\alpha\circ \lambda_0) f(x)
  &= \int_{G^x} \int_{G_x} f(g,h)
  \dd\tilde{\alpha}_x(g)\dd\alpha^x(h)\\
  \notag &= \int_{G_x} \int_{G^x} f(g,h)
  \dd\alpha^x(h)\dd\tilde{\alpha}_x(g)
  = (\tilde{\alpha}\circ \lambda_2) f(x),\\
  \label{eq:vertex_maps_integrals2}
  (\tilde{\alpha}\circ \lambda_0) f(x)
  &= \int_{G_x} \int_{G_{\rg(h)}} f(g,h)
  \dd\tilde{\alpha}_{\rg(h)}(g)\dd\tilde{\alpha}_x(h)\\
  \notag &= \int_{G_x} \int_{G_x} f(k h^{-1}, h)
  \dd\tilde{\alpha}_x(h)\dd\tilde{\alpha}_x(k)
  = (\tilde{\alpha}\circ \lambda_1) f(x).
\end{align}
We define continuous families of measures~\(\mu_i\) along
\(v_i\colon G^2\onto G^0\) for \(i=0,1,2\) by
\begin{align}
  \label{eq:compose_integration_maps0}
  \mu_0 \defeq \alpha\circ \lambda_1
  &= \alpha\circ \lambda_2,\\
  \label{eq:compose_integration_maps1}
  \mu_1 \defeq \alpha\circ \lambda_0
  &= \tilde{\alpha}\circ \lambda_2,\\
  \label{eq:compose_integration_maps2}
  \mu_2 \defeq \tilde{\alpha}\circ \lambda_0
  &= \tilde{\alpha}\circ \lambda_1.
\end{align}
We remember the following consequence of
\eqref{eq:lambda_definition1} and~\eqref{eq:vertex_maps_integrals2}
for later:
\begin{equation}
  \label{eq:compare_integrals_rep}
  \iint f(g,g^{-1} k) \dd\alpha^{\rg(k)}(g)\dd\tilde\alpha_x(k)
  =
  \iint f(g,h) \dd\tilde\alpha_{\rg(h)}(g)\dd\tilde\alpha_x(h)
\end{equation}
for all \(x\in G^0\) and \(f\in \Contc(G^1\times_{\s,G^0,\rg}
G_x)\).

As in Definition~\ref{def:corr_from_measures}, we assign
\cstar{}correspondences to all the families of measures above.
This gives the diagram of \cstar{}correspondences in
Figure~\ref{fig:corr_G2_G0}.  It commutes up to canonical
isomorphisms of \cstar{}correspondences.

\begin{figure}[htbp]
  \begin{tikzpicture}
    \matrix (m) [cd,column sep=4em, row sep=2.5em] {
      \Cont_0(G^1)&\Cont_0(G^0)\\
      \Cont_0(G^0)&\Cont_0(G^2)&\Cont_0(G^1)\\
      &\Cont_0(G^1)&\Cont_0(G^0)\\
    };
    \meascor[0]{m-2-2}{d_0}{\lambda_0}{m-1-1};
    \meascor{m-2-2}{d_1}{\lambda_1}{m-2-3};
    \meascor{m-2-2}{d_2}{\lambda_2}{m-3-2};
    \meascor[0]{m-2-2}{v_0}{\mu_0}{m-3-3};
    \meascor{m-2-2}{v_1}{\mu_1}{m-2-1};
    \meascor{m-2-2}{v_2}{\mu_2}{m-1-2};

    \meascor{m-2-3}{\rg}{\alpha}{m-3-3};
    \meascor{m-3-2}{\rg}{\alpha}{m-3-3};

    \meascor[0]{m-3-2}{\s}{\tilde{\alpha}}{m-2-1};
    \meascor{m-1-1}{\rg}{\alpha}{m-2-1};

    \meascor{m-1-1}{\s}{\tilde{\alpha}}{m-1-2};
    \meascor[0]{m-2-3}{\s}{\tilde{\alpha}}{m-1-2};
  \end{tikzpicture}
  \caption{Canonical isomorphisms of the \cstar{}correspondences
    associated to the continuous families of measures along the maps
    \(G^2\to G^1\to G^0\).  For each triangle,
    \eqref{eq:compose_integration_maps0}--\eqref{eq:compose_integration_maps2}
    and Lemma~\ref{lem:compose_measure_family} give a canonical
    isomorphism between the two \cstar{}correspondences that form
    the boundary of the triangle.}
  \label{fig:corr_G2_G0}
\end{figure}

Let~\(D\) be a \cstar{}algebra and~\(\Hilm[F]\) a Hilbert
\(D\)\nb-module.  A \emph{representation} of the groupoid
\cstar{}algebra~\(\Cst(G,\alpha)\) on~\(\Hilm[F]\) is a
nondegenerate \Star{}homomorphism
\(\Cst(G,\alpha)\to\Bound(\Hilm[F])\).  Our main theorem,
Theorem~\ref{the:universal_groupoid_Haus}, says that these
representations are equivalent in a precise sense to
``representations'' of~\((G,\alpha)\) on~\(\Hilm[F]\).
Representations of~\((G,\alpha)\) have some data, which is subject
to a condition.  We shall write down the data and conditions
succinctly as diagrams in the correspondence bicategory.  First,
however, we formulate them in a more pedestrian way.

\begin{definition}
  \label{def:representation1}
  The \emph{data of a representation} of \((G,\alpha)\)
  on~\(\Hilm[F]\) is a pair~\((\varphi, U)\), where \(\varphi\colon
  \Cont_0(G^0)\to\Bound(\Hilm[F])\) is a representation -- which
  turns~\(\Hilm[F]\) into a \(\Cont_0(G^0),D\)-correspondence --
  and~\(U\) is a unitary operator
  \begin{equation}
    \label{eq:Unitary-rep}
    U\colon \Lt^2(G^1,\s,\tilde{\alpha}) \otimes_\varphi \Hilm[F] \congto
    \Lt^2(G^1,\rg,\alpha) \otimes_\varphi \Hilm[F]
  \end{equation}
  that intertwines the left actions of~\(\Cont_0(G^1)\) on these
  Hilbert \(D\)\nb-modules.
\end{definition}

More briefly, \(U\) is an isomorphism of
\(\Cont_0(G^1),D\)-correspondences or, equivalently, a
\(2\)\nb-arrow in the correspondence bicategory.  It makes the
following diagram in the correspondence bicategory commute:
\begin{equation}
  \label{eq:U_diagram}
  \begin{tikzpicture}[baseline=(current bounding box.west)]
    \matrix (m) [cd,column sep=4em, row sep=2.5em] {
      \Cont_0(G^1)&\Cont_0(G^0)\\
      \Cont_0(G^0)&D\\
    };
    \meascor{m-1-1}{\s}{\tilde{\alpha}}{m-1-2}
    \meascor{m-1-1}{\rg}{\alpha}{m-2-1}
    \meascor{m-2-1}{\Hilm[F]}{}{m-2-2}
    \meascor{m-1-2}{\Hilm[F]}{}{m-2-2}
    \draw[dar,mid] (m-1-2) -- node[narrowfill] {\(U\)} (m-2-1);
  \end{tikzpicture}
\end{equation}

To define a representation of~\((G,\alpha)\), we build several other
correspondence isomorphisms out of~\(U\).  First, \(U\) induces an
isomorphism of \(\Cont_0(G^2),D\)-correspondences
\begin{multline*}
  1\otimes U\colon
  \Lt^2(G^2,d_1,\lambda_1) \otimes_{\Cont_0(G^1)}
  (\Lt^2(G^1,\s,\tilde{\alpha}) \otimes_\varphi \Hilm[F])
  \\\congto
  \Lt^2(G^2,d_1,\lambda_1) \otimes_{\Cont_0(G^1)}
  (\Lt^2(G^1,\rg,\alpha) \otimes_\varphi \Hilm[F]).
\end{multline*}
The associativity of the composition of correspondences and the
canonical isomorphisms \(\Lt^2(G^2,d_1,\lambda_1)
\otimes_{\Cont_0(G^1)} \Lt^2(G^1,\s,\tilde{\alpha}) \cong
\Lt^2(G^2,v_2,\mu_2)\) and \(\Lt^2(G^2,d_1,\lambda_1)
\otimes_{\Cont_0(G^1)} \Lt^2(G^1,\rg,\alpha) \cong
\Lt^2(G^2,v_0,\mu_0)\) in Figure~\ref{fig:corr_G2_G0} turn this into
an isomorphism
\begin{equation}
  \label{eq:d1_U}
  d_1^*(U)\colon \Lt^2(G^2,v_2,\mu_2) \otimes_\varphi \Hilm[F]
  \xrightarrow{\cong}
  \Lt^2(G^2,v_0,\mu_0) \otimes_\varphi \Hilm[F].
\end{equation}
Similarly, we define isomorphisms of
\(\Cont_0(G^2),D\)-correspondences
\begin{align}
  \label{eq:d0_U}
  d_0^*(U)\colon
  \Lt^2(G^2,v_2,\mu_2) \otimes_\varphi \Hilm[F]
  \xrightarrow{\cong}
  \Lt^2(G^2,v_1,\mu_1) \otimes_\varphi \Hilm[F],\\
  \label{eq:d2_U}
  d_2^*(U)\colon
  \Lt^2(G^2,v_1,\mu_1) \otimes_\varphi \Hilm[F]
  \xrightarrow{\cong}
  \Lt^2(G^2,v_0,\mu_0) \otimes_\varphi \Hilm[F].
\end{align}

\begin{definition}
  \label{def:representation}
  A pair \((\varphi,U)\) as in Definition~\ref{def:representation1}
  is a \emph{representation} if it also satisfies the condition
  \begin{equation}
    \label{eq:di_U-condition}
    d_1^*(U) = d_2^*(U)\circ d_0^*(U).
  \end{equation}
\end{definition}

The construction of~\(d_1^*(U)\) only involves horizontal and
vertical products of \(2\)\nb-arrows in the correspondence
bicategory.  This is described succinctly in the right diagram in
Figure~\ref{fig:U_coherence}.
\begin{figure}[htbp]
  \begin{tikzpicture}[scale=2]
    \node (G2) at (0,1.5) {\(\Cont_0(G^2)\)};
    \node (G0a) at (1,0) {\(\Cont_0(G^0)\)};
    \node (G1b) at (1,1) {\(\Cont_0(G^1)\)};
    \node (G1c) at (1,2) {\(\Cont_0(G^1)\)};
    \node (G0d) at (1,3) {\(\Cont_0(G^0)\)};
    \node (G0e) at (2,1.5) {\(\Cont_0(G^0)\)};
    \node (D) at (3,1.5) {\(D\)};
    \meascor[0]{G2}{v_0}{\mu_0}{G0a}
    \meascor[0]{G2}{d_2}{\lambda_2}{G1b}
    \meascor[0]{G2}{d_0}{\lambda_0}{G1c}
    \meascor[0]{G2}{v_2}{\mu_2}{G0d}
    \meascor[0]{G2}{v_1}{\mu_1}{G0e}
    \meascor{G1b}{r}{\alpha}{G0a}
    \meascor[0]{G1b}{s}{\tilde{\alpha}}{G0e}
    \meascor{G1c}{s}{\tilde{\alpha}}{G0d}
    \meascor[0]{G1c}{r}{\alpha}{G0e}
    \meascor[0]{G0a}{\Hilm[F]}{}{D}
    \meascor[0]{G0d}{\Hilm[F]}{}{D}
    \meascor{G0e}{\Hilm[F]}{}{D}
    \draw[dar,mid] (G0e) -- node[narrowfill] {\(U\)} (G0a);
    \draw[dar,mid] (G0d) -- node[narrowfill] {\(U\)} (G0e);
  \end{tikzpicture}
  \begin{tikzpicture}[scale=2]
    \node (G2) at (0,1) {\(\Cont_0(G^2)\)};
    \node (G0a) at (1,0) {\(\Cont_0(G^0)\)};
    \node (G1b) at (1,1) {\(\Cont_0(G^1)\)};
    \node (G0c) at (1,2) {\(\Cont_0(G^0)\)};
    \node (D) at (2,1) {\(D\)};
    \meascor[0]{G2}{v_0}{\mu_0}{G0a}
    \meascor{G2}{d_1}{\lambda_1}{G1b}
    \meascor[0]{G2}{v_2}{\mu_2}{G0c}
    \meascor{G1b}{r}{\alpha}{G0a}
    \meascor{G1b}{s}{\tilde{\alpha}}{G0c}
    \meascor[0]{G0a}{}{\Hilm[F]}{D}
    \meascor[0]{G0c}{\Hilm[F]}{}{D}
    \draw[dar,mid] (G0c) to[bend left=40] node[narrowfill] {\(U\)} (G0a);
  \end{tikzpicture}
  \caption{Two parallel isomorphisms of correspondences constructed
    from~\(U\).  Each triangle or square corresponds to one
    isomorphism of \cstar{}correspondences.  The unlabelled
    ones involve the canonical isomorphisms of
    \cstar{}correspondences in Figure~\ref{fig:corr_G2_G0}.}
  \label{fig:U_coherence}
\end{figure}
Each triangle or quadrilateral in this diagram describes a
\(2\)\nb-arrow between certain arrows.  These are pasted together
using the appropriate horizontal or vertical products to obtain a
\(2\)\nb-arrow from the composite arrow \(\Lt^2(G^2,v_2,\mu_2)
\otimes_\varphi \Hilm[F]\) in the top to the composite arrow
\(\Lt^2(G^2,v_0,\mu_0) \otimes_\varphi \Hilm[F]\) in the bottom.
Similarly, the left diagram in Figure~\ref{fig:U_coherence}
describes \(d_2^*(U)\circ d_0^*(U)\).  So~\eqref{eq:di_U-condition}
says that the \(2\)\nb-arrows \(\Lt^2(G^2,v_2,\mu_2) \otimes_\varphi
\Hilm[F] \Rightarrow \Lt^2(G^2,v_0,\mu_0) \otimes_\varphi \Hilm[F]\)
described by the two diagrams in Figure~\ref{fig:U_coherence} are
equal.

\subsection{The regular representation}
\label{sec:regular-rep}

We may combine the left regular representations
of~\(\Cst(G,\alpha)\) on the Hilbert spaces
\(\Lt^2(G_x,\tilde\alpha_x)\) for \(x\in G^0\) into a single
representation on the Hilbert \(\Cont_0(G^0)\)-module
\(\Hilm[F]\defeq \Lt^2(G^1,\s,\tilde\alpha)\).  To illustrate our
definition, we describe the corresponding representation
of~\((G,\alpha)\) on~\(\Hilm[F]\).  We equip~\(\Hilm[F]\) with the
left action~\(\varphi\) of~\(\Cont_0(G^0)\) defined so
that~\(\varphi(f)\) for \(f\in\Cont_0(G^0)\) acts by pointwise
multiplication with the function \(f\circ\rg\in\Contb(G^1)\).
By abuse of notation, we still write~\(\Hilm[F]\) for the
\(\Cont_0(G^0),\Cont_0(G^0)\)-correspondence
\((\Hilm[F],\varphi)\), which is the \cstar{}correspondence
\(\rg^* \Lt^2(G^1,\s,\tilde\alpha)\) induced by the topological
correspondence
\[
G^0 \xleftarrow{\rg} G^1 \xrightarrow[\tilde\alpha]{\s} G^0.
\]

By Proposition~\ref{prop:functoriality-of-corr}, the
tensor product
\(\Lt^2(G^1,\s,\tilde{\alpha})\otimes_\varphi\Hilm[F]\) is
isomorphic to the \cstar{}correspondence from \(\Cont_0(G^1)\) to
\(\Cont_0(G^0)\) associated to the composite of the underlying
topological correspondences.  This involves the fibre product space
\(G^1 \times_{\s,G^0,\rg} G^1 = G^2\) and the maps \(d_2\colon G^2
\onto G^1\), \((g,h)\mapsto g\), and \(v_2\colon G^2 \onto G^0\),
\((g,h)\mapsto \s(h)\).  The measure family along~\(v_2\) for this
composite is, by definition, \(\tilde\alpha\circ\lambda_0 =
\mu_2\). Thus
\(\Lt^2(G^1,\s,\tilde{\alpha})\otimes_\varphi\Hilm[F] \cong d_2^*
\Lt^2(G^2,v_2,\mu_2)\) is induced by the topological
correspondence
\begin{equation}
  \label{eq:left_regular_source}
  G^1 \xleftarrow{d_2} G^2 \xrightarrow[\mu_2]{v_2} G^0.
\end{equation}
Similarly, \(\Lt^2(G^1,\rg,\alpha)\otimes_\varphi\Hilm[F]\)
is isomorphic to the \cstar{}correspondence induced by
the topological correspondence
\begin{equation}
  \label{eq:left_regular_range}
  G^1 \xleftarrow{\pr_1} G^1\times_{\rg,G^0,\rg} G^1
  \xrightarrow[\tilde\alpha\circ\alpha]{\s\circ\pr_2} G^0
\end{equation}
with \(\tilde\alpha\circ\alpha (f)(x) \defeq \iint
f(g,k)\dd\alpha^{\rg(k)}(g) \dd\tilde\alpha_x(k)\) for \(f\in
\Contc(G^1\times_{\rg,G^0,\rg} G^1)\), \(x\in G^0\).  We claim that
the topological correspondences \eqref{eq:left_regular_source}
and~\eqref{eq:left_regular_range} are isomorphic through the
homeomorphism
\begin{equation}
  \label{eq:fundamental-homeo}
  \Upsilon\colon G^2 = G^1\times_{\s,G^0,\rg} G^1 \congto
  G^1\times_{\rg,G^0,\rg} G^1,\qquad
  (g,h)\mapsto (g,g\cdot h).
\end{equation}
The conditions \(\pr_1\circ \Upsilon = d_2\) and \(\s\circ
\pr_2\circ \Upsilon = v_2\) are trivial.
Equation~\eqref{eq:compare_integrals_rep} says that
\(\mu_2=\Upsilon^{-1}_*(\tilde\alpha\circ\alpha)\) or, equivalently,
\(\Upsilon_*(\mu_2)=\tilde\alpha\circ\alpha\).  Thus~\(\Upsilon\) is
an isomorphism of topological correspondences.  It induces an
isomorphism of \cstar{}\alb{}correspondences
\[
U\colon
\Lt^2(G^1,\s,\tilde\alpha)\otimes_\varphi\Hilm[F]\congto
\Lt^2(G^1,\rg,\alpha)\otimes_\varphi\Hilm[F].
\]
We claim that~\((\varphi,U)\) is a representation of~\((G,\alpha)\).

The \cstar{}correspondence \(\Lt^2(G^2,v_i,\mu_i)\otimes_\varphi
\Hilm[F]\) from~\(\Cont_0(G^2)\) to~\(\Cont_0(G^0)\) is associated
to the topological correspondence
\[
G^2 \xleftarrow{\pr_1} G^2\times_{v_i,G^0,\rg} G^1
\xrightarrow[\tilde\alpha\circ\mu_i]{\s\circ\pr_2} G^0.
\]
The isomorphisms~\(d_i^*(U)\) in
\eqref{eq:d1_U}--\eqref{eq:d2_U} are induced by isomorphisms of
topological correspondences, namely, the homeomorphisms
\begin{alignat*}{2}
  d_1^*(\Upsilon)&\colon G^2\times_{v_2,G^0,\rg} G^1\to
  G^2\times_{v_0,G^0,\rg} G^1,&\qquad
  (g,h,l)&\mapsto (g,h,ghl),\\
  d_0^*(\Upsilon)&\colon G^2\times_{v_2,G^0,\rg} G^1\to
  G^2\times_{v_1,G^0,\rg} G^1,&\qquad
  (g,h,l)&\mapsto (g,h,hl),\\
  d_2^*(\Upsilon)&\colon G^2\times_{v_1,G^0,\rg} G^1\to
  G^2\times_{v_0,G^0,\rg} G^1,&\qquad
  (g,h,l)&\mapsto (g,h,gl).
\end{alignat*}
Since the multiplication in~\(G\)
is associative, these isomorphisms of topological correspondences
satisfy \(d_2^*(\Upsilon)\circ d_0^*(\Upsilon) = d_1^*(\Upsilon)\).
Therefore, \(d_2^*(U)\circ d_0^*(U) = d_1^*(U)\).
Hence~\((\varphi,U)\) is a representation of~\((G,\alpha)\).

\subsection{Formulation of the universal property}
\label{sec:formulate_universal}

The groupoid \cstar{}algebra~\(\Cst(G,\alpha)\) is defined
in~\cite{Renault:Groupoid_Cstar}*{Chapter II}.  The
following theorem uses this definition, but it may also serve as
an alternative definition of~\(\Cst(G,\alpha)\).

\begin{theorem}
  \label{the:universal_groupoid_Haus}
  Let~\(G\) be a locally compact, Hausdorff groupoid with a Haar
  system~\(\alpha\).  Let~\(D\) be a \cstar{}algebra
  and~\(\Hilm[F]\) a Hilbert \(D\)\nb-module.  There is a bijection
  between representations of~\(\Cst(G,\alpha)\) on~\(\Hilm[F]\) and
  representations of~\((G,\alpha)\) on~\(\Hilm[F]\).  It is natural
  in the following two ways:
  \begin{enumerate}
  \item \label{en:universal_groupoid_Haus1} Let \(\Hilm[F]_1\)
    and~\(\Hilm[F]_2\)
    be two Hilbert \(D\)\nb-modules
    and let \(V\colon \Hilm[F]_1\into\Hilm[F]_2\)
    be an isometry.  Then it
    intertwines two representations of~\(\Cst(G,\alpha)\)
    on \(\Hilm[F]_1\)
    and~\(\Hilm[F]_2\)
    if and only if~\(V\)
    intertwines the corresponding representations of~\((G,\alpha)\).
  \item \label{en:universal_groupoid_Haus2} Let~\(\Hilm[E]\)
    be a \cstar{}correspondence
    from~\(D\)
    to a \cstar{}algebra~\(D'\).
    A representation of~\(\Cst(G,\alpha)\)
    or of~\((G,\alpha)\)
    on~\(\Hilm[F]\)
    induces a representation of~\(\Cst(G,\alpha)\)
    or of~\((G,\alpha)\)
    on \(\Hilm[F]\otimes_D\Hilm[E]\),
    respectively.  The bijections between representations of
    \(\Cst(G,\alpha)\)
    and~\((G,\alpha)\)
    on \(\Hilm[F]\)
    and~\(\Hilm[F]\otimes_D\Hilm[E]\)
    are compatible with these induction processes.
  \end{enumerate}
  This universal property characterises~\(\Cst(G,\alpha)\)
  uniquely up to canonical isomorphism.
\end{theorem}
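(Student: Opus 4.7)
\emph{Integration} (Section~\ref{sec:integrate}). Given a representation $(\varphi,U)$ of $(G,\alpha)$ on~$\Hilm[F]$, I would first view the isomorphism~\eqref{eq:Unitary-rep} as a continuous family of fibrewise unitaries $U_g\colon\Hilm[F]_{\s(g)}\congto\Hilm[F]_{\rg(g)}$ via the standard equivalence between Hilbert $\Cont_0(G^0)$-modules and continuous fields over~$G^0$. For $f\in\Contc(G^1)$, define
\[
\pi(f)\xi\defeq\int_{G^1}f(g)\,U_g\xi\,\dd\alpha(g),
\]
interpreted rigorously as the composition of the ``ket'' $\xi\mapsto f\otimes\xi$ into $\Lt^2(G^1,\s,\tilde\alpha)\otimes_\varphi\Hilm[F]$, the unitary~$U$, and the adjoint of an auxiliary ``ket'' into $\Lt^2(G^1,\rg,\alpha)\otimes_\varphi\Hilm[F]$ (the outcome being independent of the auxiliary choice). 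Multiplicativity and $*$-preservation of $\pi$ reduce, via Proposition~\ref{prop:functoriality-of-corr} and the canonical isomorphisms in Figure~\ref{fig:corr_G2_G0}, to the cocycle identity~\eqref{eq:di_U-condition}; a Cauchy--Schwarz bound in the $I$-norm then extends $\pi$ to a nondegenerate representation of $\Cst(G,\alpha)$.

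\emph{Disintegration} (Section~\ref{sec:disintegrate}). Given $\pi\colon\Cst(G,\alpha)\to\Bound(\Hilm[F])$, I would take $\varphi$ to be $\pi$ composed with the canonical nondegenerate morphism $\Cont_0(G^0)\to\Mult(\Cst(G,\alpha))$, and then construct $U$ from the formula
\[
\langle f\otimes\eta,\,U(g\otimes\xi)\rangle_D \defeq \langle\eta,\pi(f^{*}*g)\xi\rangle_D,\qquad f,g\in\Contc(G^1),\ \xi,\eta\in\Hilm[F].
\]
The key verifications are that this sesquilinear expression is positive and matches the inner products on $\Lt^2(G^1,\rg,\alpha)\otimes_\varphi\Hilm[F]$ and $\Lt^2(G^1,\s,\tilde\alpha)\otimes_\varphi\Hilm[F]$, so that $U$ is well-defined, isometric, and surjective after passage to Hausdorff completions. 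The cocycle identity~\eqref{eq:di_U-condition} then translates, via Proposition~\ref{prop:functoriality-of-corr} applied on the $G^2$-level, into associativity of convolution on~$\Contc(G^1)$, which is automatic.

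\emph{Back-and-forth, naturality, uniqueness} (Section~\ref{sec:back_and_forth}). That the integration and disintegration constructions are mutually inverse follows from a direct matrix-coefficient computation on elementary tensors, since both are encoded by the same sesquilinear forms $\langle\eta,\pi(f)\xi\rangle_D$. Naturality under an isometry $V\colon\Hilm[F]_1\into\Hilm[F]_2$ (clause~\eqref{en:universal_groupoid_Haus1}) and under interior-tensoring with a correspondence $\Hilm[E]\colon D\to D'$ (clause~\eqref{en:universal_groupoid_Haus2}) is immediate because both constructions commute with $V\otimes 1$ and with $-\otimes_D\Hilm[E]$ in the correspondence bicategory. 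The universal-property uniqueness of $\Cst(G,\alpha)$ up to canonical isomorphism is then the standard Yoneda argument applied to the representable functor $D\mapsto\{\text{representations of }(G,\alpha)\text{ on Hilbert }D\text{-modules}\}$.

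\emph{Main obstacle.} The crux is the disintegration step, specifically establishing that the sesquilinear form defining $U$ descends to a well-defined unitary on the completed correspondences. This amounts to showing that the densely-defined $*$-representation of $\Contc(G^1)$ induced by $\pi$ is automatically $I$-bounded even in the absence of a second-countability hypothesis on~$G$, which is the substance of the promised Corollary~\ref{cor:prerep}; it is here that the continuity of the Haar system and the commutativity of the canonical diagram in Figure~\ref{fig:corr_G2_G0} are genuinely used. Once this is in hand, the remaining machinery runs smoothly.
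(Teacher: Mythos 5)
Your overall architecture (integrate, disintegrate, back-and-forth, naturality, Yoneda-type uniqueness) matches the paper's, and your integration step is essentially correct and essentially the paper's: the rigorous definition via kets is exactly \(L(f)=T_{h_1}^*UM_fT_{h_2}\) of \eqref{eq:define_Lf}, the reduction of \(L(f_1)^*L(f_2)=L(f_1^**f_2)\) to the cocycle identity~\eqref{eq:di_U-condition} via Figure~\ref{fig:corr_G2_G0} is Proposition~\ref{pro:Lf_star-representation}, and the \(I\)\nb-norm bound is Lemma~\ref{lem:Lf_well-defined}. One caveat: the heuristic ``continuous family of fibrewise unitaries \(U_g\)'' is not available, because \(\Hilm[F]\) is a Hilbert \(D\)\nb-module rather than a Hilbert \(\Cont_0(G^0)\)\nb-module and has no fibres over~\(G^0\) (this is precisely Remark~\ref{rem:no_disintegration_module}); your rigorous formulation fortunately does not use it.

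The genuine gap is in the disintegration step, which you yourself identify as the crux. The sesquilinear form you propose, \(\braket{f\otimes\eta}{U(g\otimes\xi)}_D\defeq\braket{\eta}{\pi(f^**g)\xi}_D\), is the inner product of the interior tensor product \(\Cst(G,\alpha)\otimes_\pi\Hilm[F]\cong\Hilm[F]\), not of either correspondence \(\Lt^2(G^1,\rg,\alpha)\otimes_\varphi\Hilm[F]\) or \(\Lt^2(G^1,\s,\tilde\alpha)\otimes_\varphi\Hilm[F]\): for instance, \(\braket{g\otimes\xi}{g\otimes\xi}=\braket{\xi}{\varphi(\tilde\alpha(\abs{g}^2))\xi}\) in \(\Lt^2(G^1,\s,\tilde\alpha)\otimes_\varphi\Hilm[F]\), which already for a group differs from \(\braket{\xi}{\pi(g^**g)\xi}=\norm{\pi(g)\xi}^2\). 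So the ``key verification'' you describe would fail. The correct matrix coefficient is \(\braket{f\otimes\eta}{U(g\otimes\xi)}=\braket{\eta}{\pi(\conj f\cdot g)\xi}\) with the \emph{pointwise} product (compare~\eqref{eq:integrated_form_explicit}), and even this only shows that \(L\) determines~\(U\); to \emph{construct}~\(U\) one needs the maps \(\tau_\s,\tau_\rg\) of Lemma~\ref{lem:dense_in_HilmF} defined on \(\Contc(G^2)\odot\Hilm[F]_0\) together with the left-invariance identity~\eqref{eq:compare_integrals_rep}, which is where the Haar system genuinely enters. Relatedly, the cocycle condition for the disintegrated~\(U\) is not ``automatic associativity of convolution''; it is verified in Proposition~\ref{pro:disintegrate} by identifying \(d_i^*(U)\) explicitly on \(\Contc(G^2\times_{v_j,G^0,\rg}G^1)\odot\Hilm[F]_0\) with the homeomorphisms~\(\Upsilon_i\) and using associativity of the groupoid multiplication. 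Finally, your ``main obstacle'' inverts the paper's logic: a representation of \(\Cst(G,\alpha)\) restricts to an \(I\)\nb-bounded representation of \(\Contc(G^1)\) by definition of the \(\Cst\)\nb-norm, and the automatic boundedness of Corollary~\ref{cor:prerep} is an \emph{output} of the disintegrate-then-integrate round trip applied to pre-representations, not an input needed to make~\(U\) well defined.
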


The two naturality properties in the theorem above are the
same as in the definition of a \(\Cst\)\nb-hull for a class of
integrable representations of a \Star{}algebra in
\cite{Meyer:Unbounded}*{Section~3}.

Before we prove Theorem~\ref{the:universal_groupoid_Haus}, we relate
it to Renault's Integration and Disintegration Theorems
in~\cite{Renault:Representations}. Thus we specialise to the case
where \(D=\C\), \(\Hilm[F]\) is a separable Hilbert space,
and~\(G^1\) is second countable.
Let \((\varphi, U)\) be a representation of \((G,\alpha)\)
on~\(\Hilm[F]\) as in Theorem~\ref{the:universal_groupoid_Haus}.
The representation theory of commutative \cstar{}algebras on
separable Hilbert spaces is well known (see, for instance,
\cite{Dixmier:Cstar-algebres}*{Sections 8.2--3} or
  \cite{Kadison-Ringrose:Fundamentals_2}*{Chapter~14}).  When
we apply it to the representation~\(\varphi\) of~\(\Cont_0(G^0)\),
we get a measure class~\([\nu]\) on~\(G^0\), a
\([\nu]\)\nb-measurable field of Hilbert spaces \(\Hils=
(\Hils_x)_{x\in G^0}\) on~\(G^0\), and a unitary operator \(\Hilm[F]
\congto \Lt^2(G^0,\nu,\Hils)\) that intertwines~\(\varphi\) and the
representation of~\(\Cont_0(G^0)\) on \(\Lt^2(G^0,\nu,\Hils)\) by
pointwise multiplication; here \(\Lt^2(G^0,\nu,\Hils)\) is
the Hilbert space of (equivalence classes of) square-integrable
sections of~\(\Hils\) with respect to~\(\nu\), which is also
called the direct integral of the field \(\Hils=(\Hils_x)_{x\in
  G^0}\) and denoted by \(\int_{G^0}^\oplus \Hils_x \dd\nu(x)\),
see~\cite{Dixmier:Cstar-algebres}.

Tensoring \(\Lt^2(G^0,\nu,\Hils)\) with the \cstar{}correspondences
\(\Lt^2(G^1,\s,\tilde{\alpha})\) and \(\Lt^2(G^1,\rg,\alpha)\) gives
representations of~\(\Cont_0(G^1)\) on other separable
Hilbert spaces, which may be described similarly.  We may compute
the measure classes and measurable fields on~\(G^1\) directly.  For
\(\Lt^2(G^1,\s,\tilde{\alpha})\), we get the measure class
\([\nu\circ\tilde{\alpha}]\) and the pullback field~\(\s^*(\Hils)\)
with fibre~\(\Hils_{\s(g)}\) at \(g\in G^1\).  For
\(\Lt^2(G^1,\rg,\alpha)\), we get the measure class
\([\nu\circ\alpha]\) and the pullback field~\(\rg^*(\Hils)\) with
fibre~\(\Hils_{\rg(g)}\) at \(g\in G^1\): the proof is
  similar to that of Proposition~\ref{prop:functoriality-of-corr}.
A unitary intertwiner~\(U\) between these two representations
of~\(\Cont_0(G^1)\) can only exist if the measure classes
\([\nu\circ\tilde{\alpha}]\) and~\([\nu\circ\alpha]\) on~\(G^1\) are
equal (see
  \cite{Dixmier:Cstar-algebres}*{Proposition~8.2.4}).  By
definition, this means that the measure~\(\nu\) on~\(G^0\) is
\emph{quasi-invariant}.  The description of representations
of~\(\Cont_0(G^1)\) through a measure class and a measurable field
of Hilbert spaces is natural in the formal sense.  That is, any
unitary intertwiner between the two representations
of~\(\Cont_0(G^1)\) on \(\Lt^2(G^1,\nu\circ\alpha,\s^*\Hils)\) and
\(\Lt^2(G^1,\nu\circ\tilde{\alpha},\rg^*\Hils)\) must have the form
\begin{equation}
  \label{eq:intertwiner_G1}
  (U f)(g) = U_g (f(g)) \cdot
  \sqrt{\frac{\diff \nu\circ\alpha}
    {\diff \nu\circ\tilde{\alpha}}}
\end{equation}
for \(f\in\Lt^2(G^1,\nu\circ\alpha,\s^*\Hils)\) and almost all
\(g\in G^1\), where \((U_g)_{g\in G^1}\) is an isomorphism between
the measurable fields of Hilbert spaces \(\s^*\Hils\)
and~\(\rg^*\Hils\).  That is, \(U_g\) is a unitary operator
\(U_g\colon \Hils_{\s(g)} \congto \Hils_{\rg(g)}\) for all \(g\in G^1\)
outside a null set for the measure class
\([\nu\circ\alpha]\).  The Radon--Nikodym derivative
in~\eqref{eq:intertwiner_G1} is well defined because
\([\nu\circ\tilde{\alpha}]=[\nu\circ\alpha]\).  Conversely, any
measurable family of unitary operators
\(U_g\colon \Hils_{\s(g)} \congto \Hils_{\rg(g)}\) defines a
unitary intertwiner between the two representations
of~\(\Cont_0(G^1)\) on \(\Lt^2(G^1,\nu\circ\alpha,\s^*\Hils)\) and
\(\Lt^2(G^1,\nu\circ\tilde{\alpha},\rg^*\Hils)\).

Similarly, we may identify
\[
\Lt^2(G^2,\mu_j,v_j) \otimes_{\Cont_0(G^0)} \Lt^2(G^0,\nu,\Hils)
\cong \Lt^2(G^2,\nu\circ\mu_j,v_j^*\Hils);
\]
that is, we take \(\Lt^2\)\nb-sections of the measurable fields of
Hilbert spaces with fibres \(\Hils_{\rg(g)}\), \(\Hils_{\s(g)}=
\Hils_{\rg(h)}\) and \(\Hils_{\s(h)}\) at \((g,h)\in G^2\) with
respect to the measures \(\nu\circ\mu_j\) for \(j=0,1,2\),
respectively.  The isomorphisms of
\cstar{}correspondences~\(d_i^*(U)\) for \(i=0,1,2\) become
isomorphisms
\begin{alignat*}{2}
  d_0^*(U)&\colon \Lt^2(G^2,\mu_2,v_2) \congto \Lt^2(G^2,\mu_1,v_1),
  &\qquad d_0^*(U) &\sim (U_h)_{(g,h)\in G^2},\\
  d_1^*(U)&\colon \Lt^2(G^2,\mu_2,v_2) \congto \Lt^2(G^2,\mu_0,v_0),
  &\qquad d_1^*(U) &\sim (U_{g h})_{(g,h)\in G^2},\\
  d_2^*(U)&\colon \Lt^2(G^2,\mu_1,v_1) \congto \Lt^2(G^2,\mu_0,v_0),
  &\qquad d_2^*(U) &\sim (U_g)_{(g,h)\in G^2},
\end{alignat*}
where~\(\sim\) means that the unitary \(d_0^*(U)\) corresponds to
the measurable family of unitary operators \(U_h\colon \Hils_{\s(h)}
\congto \Hils_{\rg(h)}\), and so on, as
in~\eqref{eq:intertwiner_G1}.  Equation~\eqref{eq:di_U-condition}
holds if and only if \(U_{gh} = U_g\circ U_h\) for almost all
\((g,h)\in G^2\) with respect to the measure class \([\nu\circ
\mu_0] = [\nu\circ \mu_1] = [\nu\circ \mu_2]\).  The Radon--Nikodym
derivatives in~\eqref{eq:intertwiner_G1} cancel automatically.

Summing up, the representation theory of commutative \cstar{}algebras
translates a representation~\((\varphi,U)\)
of~\((G,\alpha)\)
on a separable Hilbert space in Definition~\ref{def:representation}
into
\begin{enumerate}
\item a quasi-invariant measure class~\([\nu]\) on~\(G^0\),
  that is, \([\nu\circ \alpha]=[\nu\circ\tilde\alpha]\);
\item a \([\nu]\)\nb-measurable field of Hilbert spaces~\(\Hils\)
  on~\(G^0\);
\item unitary operators \(U_g\colon \Hils_{\s(g)} \congto
  \Hils_{\rg(g)}\) for \([\nu\circ\alpha]\)-almost all \(g\in
  G^1\), which satisfy \(U_{gh} = U_g\circ U_h\) for
  \([\nu\circ\mu_i]\)-almost all \((g,h)\in G^2\).
\end{enumerate}

Comparing this with Renault's notion of a representation in
\cite{Renault:Representations}*{Definition 3.4}, there is only one
technical difference about which null sets are allowed in \(G^1\)
and~\(G^2\).  In~\cite{Renault:Representations}, there is one
\([\nu]\)\nb-negligible subset~\(N\) of~\(G^0\) such that the
unitaries~\(U_g\) are defined and satisfy \(U_{gh} = U_g\circ U_h\)
whenever \((g,h)\in G^2\) and \(\rg(g),\s(g)=\rg(h),\s(h)\notin N\).
This extra information is often useful, but it is not needed to
integrate a representation.  The formulas for integration of
representations in~\cite{Renault:Representations} still work if we
allow arbitrary null sets in~\(G^1\) and~\(G^2\).  If~\(G^1\) is
second countable, then \cite{Ramsay:Topologies}*{Lemma~3.3} allows to
modify~\((U_g)\) on a set of measure~\(0\) so that there is a null
set~\(N\) as above.

\begin{remark}
  \label{rem:no_disintegration_module}
  The theory in~\cite{Renault:Representations} is limited to
  Hilbert space representations because there is no disintegration
  theory for representations of commutative \cstar{}algebras on
  Hilbert modules.  For instance, there is no reasonable way to
  define ``fibres'' for the identity representation of
  \(\Cont[0,1]\) on itself.  The fibre at \(t\in [0,1]\) ought to
  have dimension~\(0\) away from~\(t\).  There is, however, no
  continuous field of Hilbert spaces over~\([0,1]\) with exactly
  one non-zero fibre.
\end{remark}

What are the Hilbert space representations of~\((G,\alpha)\)
if~\(G\)
is a group?  Since~\(G^0\)
has only one point, the quasi-invariant measure on~\(G^0\)
is irrelevant and the measurable field over~\(G^0\)
is simply the Hilbert space~\(\Hils\)
on which the representation takes place.  The isomorphism of
correspondences~\(U\)
is a unitary intertwiner for the pointwise multiplication action
of~\(\Cont_0(G)\)
on \(L^2(G,\Hils) \cong L^2(G)\otimes \Hils\).
This is equivalent to a measurable family of unitary operators
\(U_g\in\U(\Hils)\).
The condition \(d_2^*(U)\circ d_0^*(U) = d_1^*(U)\)
holds if and only if \(U_g \circ U_h = U_{g\cdot h}\)
for almost all \((g,h)\in G^2\).
Thus~\((U_g)_{g\in G}\)
is a measurable weak representation of~\(G\)
on~\(\Hils\)
(compare \cite{Ramsay:Topologies} for the notation of weak
representations).  The usual universal property of~\(\Cst(G)\)
uses \emph{continuous} representations.  Hence both universal properties
together imply the following:

\begin{corollary}
  \label{cor:automatic-continuity}
  Any measurable weak representation of a locally compact group is equal almost
  everywhere to a continuous group representation.
\end{corollary}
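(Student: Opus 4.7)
The strategy is to combine the two universal properties of $\Cst(G)$ available for a locally compact group~$G$. The classical universal property recalled in the introduction sets up a bijection between nondegenerate representations of $\Cst(G)$ on a Hilbert space~$\Hils$ and strongly continuous unitary representations $V\colon G\to\U(\Hils)$. On the other hand, Theorem~\ref{the:universal_groupoid_Haus}, together with the analysis that immediately precedes the corollary, furnishes a bijection between the same nondegenerate representations of $\Cst(G)$ and measurable weak representations $(U_g)_{g\in G}$ of~$G$ on~$\Hils$. The plan is to compare these two bijections.

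The first step is to verify that if $V\colon G\to\U(\Hils)$ is strongly continuous, and hence in particular a measurable weak representation, then both procedures associate to~$V$ the same representation of $\Cst(G)$, namely the classical integrated form $\pi(f)=\int_G f(g)V_g\dd\alpha(g)$ for $f\in\Contc(G)$. This boils down to unwinding the unitary~$U$ of Definition~\ref{def:representation1} associated to~$V$ and checking that the integration construction of Section~\ref{sec:integrate} (used in proving Theorem~\ref{the:universal_groupoid_Haus}) reproduces the classical integrated form. Granting this, the two bijections agree when restricted to strongly continuous representations.

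The second step is to start with an arbitrary measurable weak representation $(U_g)$, pass through the bijection of Theorem~\ref{the:universal_groupoid_Haus} to a representation $\pi$ of $\Cst(G)$, and then disintegrate $\pi$ through the classical universal property to obtain a strongly continuous representation $(V_g)$. Viewing $(V_g)$ again as a measurable weak representation and applying the compatibility from step one, $(V_g)$ is sent back to the same~$\pi$. Now the bijection of Theorem~\ref{the:universal_groupoid_Haus} factors through the pair $(\varphi,U)$ of Definition~\ref{def:representation}, and this pair depends on $(U_g)$ only through its almost-everywhere equivalence class, because the unitary~$U$ in~\eqref{eq:Unitary-rep} is built from an integral against the Haar system. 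Injectivity of the bijection therefore forces $U_g=V_g$ for $\alpha$-almost all~$g$.

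The main obstacle is the compatibility asserted in the first step: one has to see that the unitary~$U\colon\Lt^2(G,\s,\tilde\alpha)\otimes_\varphi\Hils\congto\Lt^2(G,\rg,\alpha)\otimes_\varphi\Hils$ attached to a strongly continuous representation~$V$ is essentially the fibrewise multiplication by~$V_g$ (as in~\eqref{eq:intertwiner_G1}, in the degenerate case $G^0=\{*\}$), and that the general integration procedure of Section~\ref{sec:integrate}, when fed with this explicit~$U$, collapses to the classical formula $\pi(f)=\int_G f(g)V_g\dd\alpha(g)$. Once this bookkeeping is carried out, the automatic-continuity statement follows at once from the two universal properties.
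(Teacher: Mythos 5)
Your argument is correct and follows essentially the same route as the paper: both proofs play the two universal properties off against each other and conclude by the almost-everywhere injectivity of integration on measurable weak representations. The only difference is cosmetic --- the paper recovers the continuous representation from the nondegenerate $L^1(G)$-module structure (using continuity of the regular representation of $G$ on $L^1(G)$) rather than from the classical universal property of $\Cst(G)$, which renders the compatibility check you single out as ``the main obstacle'' essentially automatic, since both integrated forms are then literally the same formula $f\mapsto\int_G f(g)U_g\dd\alpha(g)$.
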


\begin{proof}
  First, any measurable weak representation of~\(G\)
  integrates to a nondegenerate representation of the convolution
  algebra~\(L^1(G)\).
  Secondly, any nondegenerate Banach \(L^1(G)\)-module
  comes from a continuous representation of~\(G\)
  because the regular representation on~\(L^1(G)\)
  is continuous.  Third, this continuous representation must be equal
  almost everywhere to the given weak representation in order to
  integrate to the same representation of~\(L^1(G)\).
\end{proof}

\begin{remark}
  Our universal property also works for non-separable Hilbert spaces.
  But unitary intertwiners on~\(L^2(G,\Hils)\)
  are no longer equivalent to measurable families of unitary operators
  on~\(\Hils\)
  up to equality almost everywhere.  For instance, consider the family
  of unitary operators~\(U_t\)
  on \(\ell^2(\R)\),
  where \(U_t(\delta_t) \defeq \delta_{t+1}\),
  \(U_t(\delta_{t+1}) \defeq \delta_t\),
  and \(U_t(\delta_s)\defeq \delta_s\)
  for \(s\in\R\setminus\{t,t+1\}\).
  This family describes the identity operator on \(L^2(\R,\ell^2\R)\)
  because if \(s\in\R\),
  then \(U_t(\delta_s) = \delta_s\)
  for almost all \(t\in\R\).
  Nevertheless, the set of \(t\in\R\)
  with \(U_t=1\) is empty.
\end{remark}

What happens if~\(G\) is a locally compact space viewed as a
groupoid?  In this case, \(\s=\rg\) and \(\alpha=\tilde{\alpha}\).
Equation~\eqref{eq:di_U-condition} says that \(U\cdot U = U\).
Since~\(U\) is unitary, we may cancel~\(U\) here, so~\(U\)
is the identity map.  Thus a representation in the sense of
Definition~\ref{def:representation} is simply a representation
of~\(\Cont_0(G^0)\), which is also the groupoid \cstar{}algebra.
So Theorem~\ref{the:universal_groupoid_Haus} is trivial in this
case.  In contrast, the Integration and Disintegration Theorems
in~\cite{Renault:Representations} are non-trivial even for spaces
viewed as groupoids.

The proof of Theorem~\ref{the:universal_groupoid_Haus} requires two
constructions.  Integration takes a representation of~\((G,\alpha)\)
to one of~\(\Cst(G,\alpha)\), and disintegration takes a
representation of~\(\Cst(G,\alpha)\) to one of~\((G,\alpha)\).  We
shall discuss these two constructions in Sections \ref{sec:integrate}
and~\ref{sec:disintegrate}, respectively.  We prove that they are
inverse to each
other in Section~\ref{sec:back_and_forth}.

\section{Integration}
\label{sec:integrate}

Let~\((\varphi,U)\)
be a representation of~\((G,\alpha)\)
on a Hilbert module~\(\Hilm[F]\)
over a \cstar{}algebra~\(D\).
We are going to ``integrate'' it to a representation
of~\(\Cst(G,\alpha)\).

\begin{definition}[(Creation operators)]
  \label{def:creation}
  Let~\(\Hilm\)
  be a Hilbert module over a \cstar{}algebra~\(B\),
  let~\(\Hilm[F]\)
  be a \(B\)-\(D\)-correspondence,
  and \(x\in\Hilm\).
  Let \(T_x\colon \Hilm[F]\to \Hilm\otimes_B \Hilm[F]\)
  denote the operator \(y\mapsto x\otimes y\).
  Its adjoint~\(T_x^*\)
  maps \(z\otimes y \mapsto \braket{x}{z}\cdot y\)
  for \(x,z\in\Hilm\), \(y\in\Hilm[F]\).
\end{definition}

Fix \(f\in \Contc(G^1)\), and let~\(M_f\) denote the
operator of pointwise multiplication by~\(f\), which is how
\(\Cont_0(G^1)\) acts in the \cstar{}correspondences
\(\Lt^2(G^1,\s,\tilde{\alpha})\) and \(\Lt^2(G^1,\rg,\alpha)\).
Choose functions \(h_1,h_2\in \Contc(G^1)\) with
\(h_1(g)=h_2(g)=1\) for all \(g\in\supp f\).  View \(h_1\)
and~\(h_2\) as elements of \(\Lt^2(G^1,\rg,\alpha)\) and
\(\Lt^2(G^1,\s,\tilde{\alpha})\), respectively.  Let
\(L(f)\in\Bound(\Hilm[F])\) be the composite operator
\[
\begin{tikzpicture}
  \matrix (m) [cd] {
    \Hilm[F]&
    \Lt^2(G^1,\s,\tilde{\alpha})\otimes_\varphi \Hilm[F]&&
    \Lt^2(G^1,\s,\tilde{\alpha})\otimes_\varphi \Hilm[F]\\
    &\Lt^2(G^1,\rg,\alpha)\otimes_\varphi \Hilm[F]&&
    \Lt^2(G^1,\rg,\alpha)\otimes_\varphi \Hilm[F]&
    \Hilm[F].\\
  };
  \draw[cdar] (m-1-1) -- node {\(T_{h_2}\)} (m-1-2);
  \draw[cdar] (m-1-2) -- node {\(M_{f}\otimes\Id_{\Hilm[F]}\)} (m-1-4);
  \draw[cdar] (m-2-2) -- node {\(M_{f}\otimes\Id_{\Hilm[F]}\)} (m-2-4);
  \draw[cdar] (m-1-2) -- node {\(U\)} node[swap] {\(\cong\)} (m-2-2);
  \draw[cdar] (m-1-4) -- node {\(U\)} node[swap] {\(\cong\)} (m-2-4);
  \draw[cdar] (m-2-4) -- node {\(T_{h_1}^*\)} (m-2-5);
\end{tikzpicture}
\]
The square commutes because~\(U\), as an isomorphism of
\cstar{}correspondences, intertwines the left actions
of~\(\Cont_0(G^1)\).  We write~\(M_f\) instead
of~\(M_f\otimes\Id_{\Hilm[F]}\) in the following to simplify
notation.  So we write
\begin{equation}
  \label{eq:define_Lf}
  L(f)\defeq T_{h_1}^*UM_fT_{h_2}=T_{h_1}^*M_fUT_{h_2}.
\end{equation}

\begin{lemma}
  \label{lem:Lf_well-defined}
  The operator~\(L(f)\) does not depend on \(h_1\) and~\(h_2\) and
  satisfies
  \[
  \norm{L(f)}\le
  \norm{\alpha(\abs{f})}_\infty^{1/2} \cdot
  \norm{\tilde{\alpha}(\abs{f})}_\infty^{1/2}
  \le \norm{f}_I
  \]
  for all \(f\in \Contc(G^1)\),
  where \(\norm{f}_I\defeq \max \{\norm{\alpha(\abs{f})}_\infty,
  \norm{\tilde{\alpha}(\abs{f})}_\infty \}\) and \(\alpha\)
  and~\(\tilde{\alpha}\) also denote the integration maps
  \[
  \alpha(\abs{f})(x) \defeq
  \int_{G^x} \abs{f(g)} \dd\alpha^x(g),\qquad
  \tilde{\alpha}(\abs{f})(x) \defeq
  \int_{G_x} \abs{f(g)} \dd\tilde{\alpha}_x(g).
  \]
\end{lemma}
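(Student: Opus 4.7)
The plan is to first establish that $L(f)$ is independent of $h_1,h_2$ by pure algebra, and then to bound its norm by a ``polar-type'' factorisation $f=\bar{f_1}\cdot f_2$ with $|f_1|^2=|f_2|^2=|f|$, which allows the two occurrences of $|f|$ in the target bound to be separated onto the two sides of~$U$.

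First I would observe that the hypothesis $h_2=1$ on $\supp f$ means $f\cdot h_2=f$ pointwise, so
\[
M_f T_{h_2} y = (fh_2)\otimes y = f\otimes y,
\]
which is manifestly independent of the choice of $h_2$. Dually, $\bar{f}\cdot h_1=\bar{f}$ yields
\[
T_{h_1}^*M_f(\xi\otimes y) = \varphi(\braket{h_1}{f\xi}_{\rg})\,y = \varphi(\braket{\bar{f}h_1}{\xi}_{\rg})\,y = \varphi(\braket{\bar{f}}{\xi}_{\rg})\,y,
\]
which is independent of $h_1$. Combined with the identity $UM_f=M_fU$ built into the definition of~$U$ as a $\Cont_0(G^1)$-module map, this establishes both expressions in~\eqref{eq:define_Lf} for $L(f)$ and the joint independence of $h_1,h_2$.

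For the norm bound I would set $f_1\defeq|f|^{1/2}\in\Contc(G^1)$ and $f_2\defeq f\cdot|f|^{-1/2}$ (extended by zero on $\{f=0\}$), so that $f_2$ is bounded Borel with $\supp f_2\subseteq\supp f$, $|f_2|^2=|f|$, and $\bar{f_1}\cdot f_2=f$. Because $f_1\in\Cont_0(G^1)$ the intertwining property gives $UM_{f_1}=M_{f_1}U$, and combined with the computations from the previous paragraph this yields
\[
L(f) = T_{h_1}^*UM_{\bar{f_1}}M_{f_2}T_{h_2}
= T_{h_1}^*M_{\bar{f_1}}\,U\,(M_{f_2}T_{h_2})
= T_{f_1,\rg}^*\,U\,T_{f_2,\s},
\]
where $T_{f_1,\rg}$ and $T_{f_2,\s}$ are the creation operators with $f_1,f_2$ viewed as elements of $\Lt^2(G^1,\rg,\alpha)$ and $\Lt^2(G^1,\s,\tilde\alpha)$ respectively. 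Since $U$ is unitary and $\|T_\xi\|^2=\|\braket{\xi}{\xi}\|_\infty$ for a creation operator,
\[
\|L(f)\| \le \|T_{f_1,\rg}\|\cdot\|T_{f_2,\s}\|
= \|\alpha(|f|)\|_\infty^{1/2}\cdot\|\tilde\alpha(|f|)\|_\infty^{1/2},
\]
and the second inequality of the lemma is the elementary $\sqrt{ab}\le\max(a,b)$.

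The main technical point to justify is that $T_{f_2,\s}$ is well defined for a merely Borel $f_2$. This reduces to showing that $f_2\otimes y$ is a bona fide element of $\Lt^2(G^1,\s,\tilde\alpha)\otimes_\varphi\Hilm[F]$, which follows from $\braket{f_2}{f_2}_{\s}=\tilde\alpha(|f|)\in\Cont_0(G^0)$: this identifies $f_2$ with an element of the Hilbert module completion of norm $\|\tilde\alpha(|f|)\|_\infty^{1/2}$, via density of $\Contc(G^1)$. Crucially, I never need $M_{f_2}$ to be adjointable on its own; it only appears in the composite $M_{f_2}T_{h_2}y=f_2\otimes y$, so the Borel regularity of $f_2$ enters only in this very mild way.
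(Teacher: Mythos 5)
Your proof is correct and follows essentially the same route as the paper: factor \(f=\conj{f_1}\cdot f_2\) with \(\abs{f_1}^2=\abs{f_2}^2=\abs{f}\), absorb the cut-offs via \(M_{f_i}T_{h_i}=T_{f_i}\) after commuting a factor past~\(U\), and bound \(\norm{T_{f_1}^*UT_{f_2}}\) by the product of the two creation-operator norms. The only superfluous step is your final paragraph: \(f_2=f\cdot\abs{f}^{-1/2}\) is in fact continuous, since \(\abs{f_2}=\abs{f}^{1/2}\) tends to \(0\) wherever \(f\) does, so \(f_2\in\Contc(G^1)\) exactly as in the paper's choice and no Borel-regularity issue arises.
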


Similar estimates are used in
\cite{Khoshkam-Skandalis:Crossed_inverse_semigroup}*{Section 3.6} to
prove that the regular representation of a groupoid is bounded by the
\(I\)\nb-norm.

\begin{proof}
  There are \(f_1,f_2\in\Contc(G^1)\)
  with \(\supp(f_i)\subseteq \supp(f)\)
  and \(f(g)=\conj{f_1}(g)\cdot f_2(g)\)
  for all \(g\in G^1\).
  A good choice for later estimates is to take
  \(f_2(g) \defeq \sqrt{\abs{f(g)}}\)
  and \(f_1(g) \defeq \conj{f}(g)/f_2(g)\)
  if \(f(g)\neq0\)
  and \(f_1(g)\defeq0\)
  if \(f(g)=0\).
  Now we use \(M_{f_i}\circ T_{h_i} = T_{f_i\cdot h_i} = T_{f_i}\)
  and that~\(M_{\conj{f_1}} = M_{f_1}^*\)
  commutes with~\(U\) to simplify
  \[
  \braket{\xi}{L(f)\eta}
  = \braket{h_1\otimes \xi}{U(\conj{f_1} f_2 h_2\otimes \eta)}
  = \braket{f_1 h_1\otimes \xi}{U(f_2 h_2\otimes \eta)}
  = \braket{f_1\otimes \xi}{U(f_2\otimes \eta)}
  \]
  for all \(\xi,\eta\in \Hilm[F]\).  That is,
  \begin{equation}
    \label{eq:integrated_form_explicit}
    L\bigl(\conj{f_1}\cdot f_2\bigr) = T_{f_1}^* U T_{f_2}
  \end{equation}
  for all \(f_1,f_2\in \Contc(G^1)\), where~\(\cdot\) denotes the
  pointwise product.  This does not depend on \(h_1\) and~\(h_2\)
  any more.  And \(\norm{T_f} = \norm{T_f^*} =
  \norm{f}\) and \(\norm{U}=1\) imply
  \[
  \norm{L(f)}
  \le \norm{f_1}_{\Lt^2(G^1,\rg,\alpha)} \norm{f_2}_{\Lt^2(G^1,\s,\tilde{\alpha})}.
  \]
  If we choose \(f_1\) and~\(f_2\) as above, then
  \(\abs{f_1(g)}^2 = \abs{f_2(g)}^2 = \abs{f(g)}\) for all \(g\in
  G^1\), so that
  \[
  \norm{f_1}^2_{\Lt^2(G^1,\rg,\alpha)} =
  \norm{\alpha(\abs{f})}_\infty,
  \qquad
  \norm{f_2}^2_{\Lt^2(G^1,\s,\tilde{\alpha})} =
  \norm{\tilde{\alpha}(\abs{f})}_\infty.
  \]
  This gives the desired norm estimate for~\(L(f)\).
\end{proof}

The previous lemma implies that the map~\(L\) is continuous
from~\(\Contc(G^1)\) with the inductive limit topology
to~\(\Bound(\Hilm[F])\) with the norm topology because the inductive
limit topology is stronger than the \(I\)\nb-norm topology.

\begin{lemma}
  \label{lem:Lf_nondegenerate}
  The linear span of~\(L(f)\xi\)
  for \(f\in\Contc(G^1)\),
  \(\xi\in\Hilm[F]\) is dense in~\(\Hilm[F]\).
\end{lemma}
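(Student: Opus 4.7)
The plan is to unfold the definition of $L(f)$, exploit that $U$ is unitary, and reduce the density claim to a statement about $\Cont_0(G^0)$ together with the nondegeneracy of $\varphi$.

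By formula~\eqref{eq:integrated_form_explicit} and the factorisation $f=\conj{f_1}\cdot f_2$ used in the proof of Lemma~\ref{lem:Lf_well-defined}, the linear span of $\{L(f)\xi:f\in\Contc(G^1),\ \xi\in\Hilm[F]\}$ agrees with the span of the vectors $T_{f_1}^*U(f_2\otimes\xi)$ for $f_1,f_2\in\Contc(G^1)$ and $\xi\in\Hilm[F]$. Since $\Contc(G^1)$ is by construction dense in $\Lt^2(G^1,\s,\tilde\alpha)$, the elementary tensors $f_2\otimes\xi$ span a dense subspace of $\Lt^2(G^1,\s,\tilde\alpha)\otimes_\varphi\Hilm[F]$, so $\{U(f_2\otimes\xi)\}$ spans a dense subspace of $\Lt^2(G^1,\rg,\alpha)\otimes_\varphi\Hilm[F]$ because $U$ is unitary. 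By continuity of $T_{f_1}^*$, the closed span we want therefore coincides with the closed span of $\{T_{f_1}^*(\eta\otimes\zeta):f_1,\eta\in\Contc(G^1),\ \zeta\in\Hilm[F]\}$, and a direct computation gives
\[
T_{f_1}^*(\eta\otimes\zeta)=\varphi\bigl(\braket{f_1}{\eta}\bigr)\zeta,\qquad
\braket{f_1}{\eta}(x)=\int_{G^x}\conj{f_1(g)}\,\eta(g)\dd\alpha^x(g).
\]

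Since $\varphi$ is nondegenerate, it suffices to show that $\spn\{\braket{f_1}{\eta}:f_1,\eta\in\Contc(G^1)\}$ is dense in $\Cont_0(G^0)$. This span is stable under pointwise multiplication by $\Cont_0(G^0)$: for $\phi\in\Cont_0(G^0)$ one has $\phi\cdot\braket{f_1}{\eta}=\braket{f_1}{(\phi\circ\rg)\cdot\eta}$, and $(\phi\circ\rg)\cdot\eta$ lies again in $\Contc(G^1)$. Its closure is therefore a closed ideal in $\Cont_0(G^0)$, so it is enough to exhibit, for each $x_0\in G^0$, some $\braket{f_1}{\eta}$ with value nonzero at $x_0$. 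Choosing $f_1=\eta\in\Contc(G^1)$ with compact support in a small neighbourhood of the unit arrow $\Id_{x_0}\in G^1$ and $\eta(\Id_{x_0})\neq 0$, the full support of $\alpha^{x_0}$ gives $\braket{\eta}{\eta}(x_0)=\int_{G^{x_0}}\abs{\eta(g)}^2\dd\alpha^{x_0}(g)>0$.

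The only substantive step is this final density statement for $\Cont_0(G^0)$, which rests on the positivity of the Haar measure near the units; the rest is a routine unwinding of the definition of $L(f)$ using continuity of $T_{f_1}^*$ and the fact that $U$ is unitary.
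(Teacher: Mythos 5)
Your proof is correct and follows essentially the same route as the paper: density of elementary tensors, unitarity of \(U\), and then reduction via \(T_{f_1}^*\) to nondegeneracy of \(\varphi\) together with fullness of \(\Lt^2(G^1,\rg,\alpha)\). The only difference is that the paper simply cites fullness of this Hilbert \(\Cont_0(G^0)\)-module, whereas you verify it explicitly via the closed-ideal argument and the full support of the Haar system; both are fine.
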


\begin{proof}
  The linear span of~\(T_{f_2}\eta\) for \(f_2\in\Contc(G^1)\),
  \(\eta\in \Hilm[F]\) is dense in the tensor product
  \(\Lt^2(G^1,\s,\tilde{\alpha}) \otimes_\varphi \Hilm[F]\).
  Since~\(U\) is unitary, the linear span of~\(U T_{f_2}\eta\) for
  such \(f_2\) and~\(\eta\) is still dense in
  \(\Lt^2(G^1,\rg,\alpha)\otimes_\varphi \Hilm[F]\).  Then the
  linear span of \(L\bigl(\conj{f_1}\cdot f_2\bigr)(\eta) =
  T_{f_1}^* U T_{f_2}\eta\) for \(f_1,f_2\in\Contc(G^1)\), \(\eta\in
  \Hilm[F]\) is dense in~\(\Hilm[F]\) because the Hilbert
  \(\Cont_0(G^0)\)-module \(\Lt^2(G^1,\rg,\alpha)\) is full
  and~\(\varphi\) is nondegenerate.  Here we have
  used~\eqref{eq:integrated_form_explicit}.
\end{proof}

\begin{proposition}
  \label{pro:Lf_star-representation}
  The map~\(L\) is a nondegenerate \Star{}homomorphism from the
  convolution algebra \((\Contc(G^1),*)\) to \(\Bound(\Hilm[F])\).
\end{proposition}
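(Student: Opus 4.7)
Linearity of \(L\) is immediate from \eqref{eq:integrated_form_explicit}; boundedness is Lemma~\ref{lem:Lf_well-defined}; and nondegeneracy is Lemma~\ref{lem:Lf_nondegenerate}. So the only content left is the \(\ast\)\nb-algebra identities \(L(f_1 \ast f_2) = L(f_1)\, L(f_2)\) and \(L(f^*) = L(f)^*\), which I would treat separately.

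\emph{Multiplicativity.} Writing each factor in the form \(L(\conj{f_1} f_2) = T_{f_1}^* U T_{f_2}\), the product becomes
\[
L(\conj{f_1} f_2)\, L(\conj{g_1} g_2) \;=\; T_{f_1}^*\, U\, T_{f_2} T_{g_1}^*\, U\, T_{g_2}.
\]
The plan is to recognise the middle expression \(U\, T_{f_2} T_{g_1}^*\, U\) as the pushdown of \(d_2^*(U) \circ d_0^*(U)\) to \(\Lt^2(G^1,s,\tilde\alpha) \otimes_\varphi \Hilm[F]\). Concretely, using the tensor-product isomorphisms of Figure~\ref{fig:corr_G2_G0} together with Proposition~\ref{prop:functoriality-of-corr}, the creation/annihilation operators \(T_{f_2}\) and \(T_{g_1}^*\) let one realise this sandwich as coming from a canonical isomorphism on \(\Lt^2(G^2, v_1, \mu_1) \otimes_\varphi \Hilm[F]\), and the two copies of \(U\) become \(d_0^*(U)\) and \(d_2^*(U)\). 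The cocycle condition \eqref{eq:di_U-condition} then collapses this to a single \(d_1^*(U)\), namely \(U\) tensored with \(\Lt^2(G^2, d_1, \lambda_1)\); integrating out the \(\lambda_1\)\nb-fibre via \eqref{eq:lambda_definition1} reproduces the convolution \(f_1 \ast f_2\) on \(G^1\), and the resulting operator is \(L(f_1 \ast f_2)\). In the bicategorical language of Figure~\ref{fig:U_coherence}, the identity is just the equality of the two pastings.

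\emph{Involutivity.} Taking adjoints in \(L(\conj{f_1} f_2) = T_{f_1}^* U T_{f_2}\) gives \(L(\conj{f_1} f_2)^* = T_{f_2}^*\, U^*\, T_{f_1}\). Since \((\conj{f_1} f_2)^*(g) = \conj{(f_2\circ \iota)(g)}\, (f_1\circ \iota)(g)\) with \(\iota\) the groupoid inversion, the involutivity identity reduces to
\[
T_{f_2\circ \iota}^*\, U\, T_{f_1\circ \iota} \;=\; T_{f_2}^*\, U^*\, T_{f_1},
\]
which expresses that \(U^*\) is the pullback of \(U\) along the isomorphism of topological correspondences induced by \(\iota\) (swapping \(r \leftrightarrow s\) and \(\alpha \leftrightarrow \tilde\alpha\)). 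I would derive this by specialising the cocycle condition on \(G^2\) to pairs near \((u,u)\) and \((g, g^{-1})\): the cocycle at \((u,u)\) together with unitarity forces the fibre unitary \(U_u\) to be the identity on units, and then the cocycle at \((g, g^{-1})\) gives \(U_{g^{-1}} = U_g^*\). Passing from this fibrewise statement to the correspondence level via the continuous-field picture of Hilbert \(\Cont_0(G^0)\)\nb-modules and density of \(\Contc(G^1)\) gives the displayed identity.

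\emph{Main obstacle.} The harder of the two steps is multiplicativity, because the ``sandwich'' \(U\, T_{f_2} T_{g_1}^*\, U\) is not itself a pointwise-multiplication operator: it must be massaged into the pushdown of a genuine operator on a \(G^2\)-level correspondence before \eqref{eq:di_U-condition} can be invoked. This requires careful bookkeeping of the tensor-product balancings over \(\Cont_0(G^1)\) and of which measure systems from \eqref{eq:lambda_definition0}--\eqref{eq:compose_integration_maps2} appear where, guided by Lemma~\ref{lem:compose_measure_family} and Proposition~\ref{prop:functoriality-of-corr}. Once this translation between operator-theoretic and measure-theoretic data is in place, the cocycle condition does the rest of the work.
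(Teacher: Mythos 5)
Your plan for multiplicativity is viable and is essentially the route the paper alludes to when it remarks that using \(d_1^*(U)=d_2^*(U)\circ d_0^*(U)\) directly ``would lead to a proof that \(L(f_1*f_2)=L(f_1)L(f_2)\)''; the bookkeeping you describe (lifting the sandwich to a \(G^2\)\nb-level operator via the isomorphisms of Figure~\ref{fig:corr_G2_G0} and the relation \(d_i^*(U)T^{d_i}_{h'}=T^{d_i}_{h'}U\)) is exactly what the paper carries out. The genuine gap is in your involutivity step. You propose to establish \(T_{f_2\circ\iota}^*\,U\,T_{f_1\circ\iota}=T_{f_2}^*\,U^*\,T_{f_1}\) by ``specialising the cocycle condition to pairs near \((u,u)\) and \((g,g^{-1})\)'' to get \(U_u=\Id\) and \(U_{g^{-1}}=U_g^*\) fibrewise, and then passing to the correspondence level via the continuous-field picture. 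This fails for two reasons. First, the fibre unitaries \(U_g\) do not exist in the setting of the proposition: \(\Hilm[F]\) is a Hilbert module over an arbitrary \cstar{}algebra~\(D\), and the paper points out explicitly (Remark~\ref{rem:no_disintegration_module}) that there is no disintegration theory for representations of commutative \cstar{}algebras on Hilbert modules; the continuous-field description applies to Hilbert \(\Cont_0(Y)\)\nb-modules, whereas \(\Lt^2(G^1,\s,\tilde\alpha)\otimes_\varphi\Hilm[F]\) is a Hilbert \(D\)\nb-module. Second, even in the separable Hilbert space case where fibres do exist, the unit space inside~\(G^1\) and the set \(\{(g,g^{-1})\}\subseteq G^2\) are typically null for the relevant measure classes \([\nu\circ\alpha]\) and \([\nu\circ\mu_i]\) (for instance for any nondiscrete group), so the almost-everywhere cocycle identity cannot be evaluated there; extracting pointwise conclusions from an a.e.\ cocycle is precisely the delicate point handled by Ramsay's lemma, and your argument bypasses it without justification.

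Note also that multiplicativity alone does not yield the \(\ast\)\nb-property, so the involutivity step cannot simply be dropped. The paper's way around this is to prove the single identity \(L(f_1)^*L(f_2)=L(f_1^**f_2)\) (using the cocycle condition in the equivalent form \(d_2^*(U)^*d_1^*(U)=d_0^*(U)\)), from which both \(L(f^*)=L(f)^*\) and \(L(f_1*f_2)=L(f_1)L(f_2)\) follow by a purely algebraic argument combined with the density of \(\spn L(\Contc(G^1))\Hilm[F]\) from Lemma~\ref{lem:Lf_nondegenerate}; no evaluation of~\(U\) at units or at inverse pairs is ever needed. I would recommend you restructure your proof around that starred identity: the computation is no harder than the one you already sketch for multiplicativity, and it closes the gap.
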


\begin{proof}
  We are going to prove below that
  \begin{equation}
    \label{eq:Lf_star-rep}
    L(f_1)^*L(f_2) = L(f_1^* * f_2).
  \end{equation}
  for all \(f_1,f_2\in \Contc(G^1)\).  We first claim that this
  implies \(L(f)^* = L(f^*)\) and then \(L(f_1) L(f_2) = L(f_1 *
  f_2)\) for \(f,f_1,f_2\in\Contc(G^1)\); that is, \(L\) is a
  \Star{}representation.  Since \(f^{**}=f\) for all
  \(f\in\Contc(G^1)\), \eqref{eq:Lf_star-rep} is equivalent to
  \(L(f_1^*)^*L(f_2) = L(f_1 * f_2)\).  Since \(\Contc(G^1)\) is a
  \Star{}algebra, this implies
  \begin{multline*}
    L(f_1^*)^* L(f_2^*)^* L(f_3)
    = L(f_1^*)^* L(f_2*f_3)
    = L(f_1 * (f_2*f_3))
    = L((f_1 * f_2)*f_3)
    \\= L((f_1 * f_2)^*)^* L(f_3)
    = L(f_2^* * f_1^*)^* L(f_3)
    = L(f_1^*)^* L(f_2) L(f_3).
  \end{multline*}
  Hence \(\braket{L(f_1^*)\xi_1}{L(f_2^*)^* L(f_3)\xi_2}
  =\braket{L(f_1^*)\xi_1}{L(f_2) L(f_3)\xi_2}\) for all
  \(\xi_1,\xi_2\in\Hilm[F]\).  Vectors of the form \(L(f_1^*)\xi_1\)
  or~\(L(f_3)\xi_2\) span a dense subspace of~\(\Hilm[F]\) by
  Lemma~\ref{lem:Lf_nondegenerate}.  Hence \(L(f_2^*)^*=L(f_2)\) for
  all \(f_2\in\Contc(G^1)\) and~\(L\) is nondegenerate.

  It remains to prove~\eqref{eq:Lf_star-rep}.  Our analysis in the
  Hilbert space case suggests that we need~\eqref{eq:di_U-condition}
  with the operators~\(d_i^*(U)\) for \(i=0,1,2\) in
  \eqref{eq:d1_U}--\eqref{eq:d2_U}.  We will use the equivalent
  formula \(d_2^*(U)^* d_1^*(U) = d_0^*(U)\) because \(d_1^*(U) =
  d_2^*(U)\circ d_0^*(U)\) would lead to a proof that \(L(f_1 * f_2)
  = L(f_1)L(f_2)\).  We are given \(f_1,f_2\in\Contc(G^1)\) and
  define \(f\in\Contc(G^2)\) by
  \[
  f(g_1,g_2) = \conj{f_1(g_1)} f_2(g_1\cdot g_2).
  \]
  Let \(h'\in\Contc(G^2)\) be some function with
  \(h'\cdot f=f\), that is, \(h'\) is~\(1\) on the (compact) support
  of~\(f\).  We use~\eqref{eq:di_U-condition} and
  that~\(d_2^*(U)^*\) commutes with \(\Cont_0(G^2)\) to compute
  \[
  T_{h'}^* M_f d_0^*(U) T_{h'}
  = T_{h'}^* M_f d_2^*(U)^* d_1^*(U) T_{h'}
  = T_{h'}^* d_2^*(U)^* M_f d_1^*(U) T_{h'}.
  \]
  We are going to prove that
  \begin{align}
    \label{eq:Lf_star-rep_2}
    T_{h'}^* M_f d_0^*(U) T_{h'} &= L(f_1^* *f_2),\\
    \label{eq:Lf_star-rep_3}
    T_{h'}^* d_2^*(U)^* M_f d_1^*(U) T_{h'} &= L(f_1)^* L(f_2).
  \end{align}
  This will finish the proof of~\eqref{eq:Lf_star-rep}.

  We begin with some preparatory observations.  Our proof depends on
  the isomorphisms of \cstar{}correspondences in
  Figure~\ref{fig:corr_G2_G0} such as
  \begin{equation}
    \label{eq:sample_corr_G2_G0}
    \Lt^2(G^2,d_0,\lambda_0) \otimes_{\Cont_0(G^1)} \Lt^2(G^1,\rg,\alpha)
    \cong \Lt^2(G^2,v_1,\mu_1).
  \end{equation}
  This isomorphism is described in
  Lemma~\ref{lem:compose_measure_family} and maps
  \(\varphi_1\otimes\varphi_2\)
  for \(\varphi_1\in\Contc(G^2)\), \(\varphi_2\in\Contc(G^1)\) to
  the function
  \[
  (g_1,g_2)\mapsto \varphi_1(g_1,g_2)\cdot \varphi_2(d_0(g_1,g_2))
  = \varphi_1(g_1,g_2)\cdot \varphi_2(g_2).
  \]
  Hence the inverse isomorphism can be taken to
  send \(\varphi_1\mapsto \varphi_1\otimes k\), where
  \(k\in\Contc(G^1)\) is such that \(k(g_2)=1\) for all
  \((g_1,g_2)\in\supp\varphi_1\).  Similar remarks apply to all
  commutative triangles in Figure~\ref{fig:corr_G2_G0}.

  The operators \(T^*_{h'}\) and~\(T_{h'}\)
  in \eqref{eq:Lf_star-rep_2} and~\eqref{eq:Lf_star-rep_3}
  have slightly different meanings: the
  first treats \(h'\in \Lt^2(G^2,v_1,\mu_1)\), the second as \(h'\in
  \Lt^2(G^2,v_2,\mu_2)\); this is implicit in
  \eqref{eq:Lf_star-rep_2} and~\eqref{eq:Lf_star-rep_3}
  because \(M_f d_0^*(U)\) and \(d_2^*(U)^* M_f d_1^*(U)\) are
  operators from \(\Lt^2(G^2,v_2,\mu_2)\) to \(\Lt^2(G^2,v_1,\mu_1)\).
  We write \((T_{h'}^{v_1})^*\) or~\(T_{h'}^{v_2}\) to clarify in
  which \cstar{}correspondence we view~\(h'\).  We shall also
  need~\(T_{h'}^{d_0}\), and so on.  The definition of~\(d_i^*(U)\)
  through \(\Id_{\Lt^2(G^2,d_i,\lambda_i)} \otimes U\) implies
  \begin{equation}
    \label{eq:diU_T}
    d_i^*(U) T^{d_i}_{h'} = T^{d_i}_{h'} U\qquad
    \text{for }i=0,1,2.
  \end{equation}

  Now we compute the operator
  \((T^{v_1}_{h'})^* M_f d_0^*(U) T^{v_2}_{h'}\).
  First, we rewrite \(h'= h'\otimes k\),
  where~\(k\)
  is~\(1\)
  on a sufficiently large compact subset, compare the discussion
  after~\eqref{eq:sample_corr_G2_G0}.  Then
  \(T^{v_2}_{h'} = T^{d_0}_{h'} T^{\s}_k\)
  and \(T^{v_1}_{h'} = T^{d_0}_{h'} T^{\rg}_k\).
  Using~\eqref{eq:diU_T} as well, we get
  \[
  (T^{v_1}_{h'})^* M_f d_0^*(U) T^{v_2}_{h'}
  = (T^{\rg}_k)^* (T^{d_0}_{h'})^* M_f T^{d_0}_{h'} U T^{\s}_k.
  \]
  Let \(\varphi\in \Lt^2(G^1,\rg,\alpha)\), \(\xi\in\Hilm[F]\).
  The operator \((T^{d_0}_{h'})^* M_f T^{d_0}_{h'}\)
  does the following on \(\varphi\otimes\xi\):
  \[
  \varphi\otimes\xi \xrightarrow{T^{d_0}_{h'}}
  h'\otimes \varphi\otimes\xi \xrightarrow{M_f}
  f h'\otimes \varphi\otimes\xi \xrightarrow{(T^{d_0}_{h'})^*}
  \braket{h'}{f h'}_{\Lt^2(G^2,d_0,\lambda_0)}\cdot\varphi\otimes\xi,
  \]
  where the product in \(\braket{h'}{f h'}\cdot \varphi\)
  is the pointwise multiplication action of~\(\Cont_0(G^1)\)
  on \(\varphi\in \Lt^2(G^1,\rg,\alpha)\).
  Since \(h'=1\)
  on the support of~\(f\),
  we get \((T^{d_0}_{h'})^* M_f T^{d_0}_{h'} = M_{\lambda_0(f)}\) with
  \begin{multline*}
    \lambda_0(f)(g_2)
    = \braket{h'}{f h'}_{\Lt^2(G^2,d_0,\lambda_0)}(g_2)
    = \int_{G_{\rg(g_2)}} f(g_1,g_2) \dd\tilde{\alpha}_{\rg(g_2)}(g_1)
    \\= \int_{G^{\rg(g_2)}} f(g_1^{-1},g_2) \dd\alpha^{\rg(g_2)}(g_1)
    = \int_{G^{\rg(g_2)}} \conj{f_1(g_1^{-1})} f_2(g_1^{-1}\cdot g_2)
    \dd\alpha^{\rg(g_2)}(g_1)
    = (f_1^* * f_2)(g_2).
  \end{multline*}
  Plugging this into our computations above
  proves~\eqref{eq:Lf_star-rep_2}.

  Now we prove~\eqref{eq:Lf_star-rep_3}.  We rewrite \(T^{v_2}_{h'} =
  T^{d_1}_{h'} T^{\s}_k\) and \(T^{v_1}_{h'} = T^{d_2}_{h'}
  T^{\s}_k\) because \(v_2= \s\circ d_1\) and \(v_1 = \s\circ d_2\).
  We use~\eqref{eq:diU_T} to compute
  \[
  (T^{v_1}_{h'})^* d_2^*(U)^* M_f d_1^*(U) T_{h'}^{v_2}
  = (T^{\s}_k)^* U^* (T^{d_2}_{h'})^* M_f T^{d_1}_{h'} U T^{\s}_k.
  \]
  Let \(\varphi\in\Contc(G^1) \subseteq \Lt^2(G^1,\rg,\alpha)\) and
  \(\xi\in\Hilm[F]\).  Then~\(M_f T^{d_1}_{h'}\) first maps
  \(\varphi\otimes\xi\) to \(f\cdot h'\otimes \varphi\otimes \xi \in
  \Lt^2(G^2,d_1,\lambda_1) \otimes_{\Cont_0(G^1)}
  \Lt^2(G^1,\rg,\alpha)\otimes_\varphi\Hilm[F]\).  To apply the
  operator~\((T^{d_2}_{h'})^*\) to this, we apply the canonical
  isomorphism
  \[
  \Lt^2(G^2,d_1,\lambda_1) \otimes_{\Cont_0(G^1)}
  \Lt^2(G^1,\rg,\alpha)
  \cong \Lt^2(G^2,v_0,\mu_0) \cong
  \Lt^2(G^2,d_2,\lambda_2) \otimes_{\Cont_0(G^1)}
  \Lt^2(G^1,\rg,\alpha)
  \]
  from Figure~\ref{fig:corr_G2_G0} and then take the inner product
  with~\(h'\) in the first tensor factor.  Since~\(h'\) is~\(1\)
  wherever our functions are supported, we have \(f h' = f\), and
  the inner product with~\(h'\) simply applies the integration
  map~\(\lambda_2\).  Thus~\((T^{d_2}_{h'})^* M_f T^{d_1}_{h'}\)
  maps~\(\varphi\otimes\xi\) to~\(\psi\otimes\xi\) with
  \begin{align*}
    \psi(g_1)
    &= \int_{G^{\s(g_1)}} f(g_1,g_2) (\varphi\circ d_1)(g_1,g_2)
    \dd\alpha^{\s(g_1)}(g_2)
    \\&= \int_{G^{\s(g_1)}} \conj{f_1(g_1)} f_2(g_1 g_2) \varphi(g_1 g_2)
    \dd\alpha^{\s(g_1)}(g_2)
    \\&= \conj{f_1(g_1)} \int_{G^{\s(g_1)}} f_2(g_2) \varphi(g_2)
    \dd\alpha^{\rg(g_1)}(g_2).
  \end{align*}
  As a consequence,
  \[
  (T^{d_2}_{h'})^* M_f T^{d_1}_{h'}
  = M^*_{f_1} T^{\rg}_k (T^{\rg}_k)^* M_{f_2}.
  \]
  Putting things together gives
\[
  (T^{v_1}_{h'})^* (M_f d_2^*(U)^* d_1^*(U)) T_{h'}^{v_2}\\
  = (T^{\s}_k)^* U^* M^*_{f_1} T^{\rg}_k
  (T^{\rg}_k)^* M_{f_2} U T^{\s}_k
  = L(f_1)^**L(f_2).
\]
\end{proof}

Since the \Star{}representation~\(L\) of~\((\Contc(G^1),*)\) is
bounded in the \(I\)\nb-norm, it extends uniquely to a
representation of~\(\Cst(G,\alpha)\), which we still denote
by~\(L\).  This is the integrated form of the
representation~\((\varphi,U)\).

\begin{example}[(The integrated regular representation)]
  We describe the integrated form of the regular representation
  of~\((G,\alpha)\) introduced in Section~\ref{sec:regular-rep}.
  We continue to use the notation from that construction.  The
  underlying \(\Cont_0(G^0),\Cont_0(G^0)\)-correspondence is
  \((\Hilm[F],\varphi)\defeq \rg^*\Lt^2(G^1,\s,\tilde\alpha)\).
  The isomorphism
  \[
  U\colon \Lt^2(G^1\times_{\s,\rg}G^1,\nu_2,\mu_2)\congto
  \Lt^2(G^1\times_{\rg,\rg}G^1,\s\circ\pr_2,\tilde\alpha\circ\alpha)
  \]
  is the unique extension of the map on continuous functions with
  compact support defined by \(U(\zeta)(g,h)\defeq
  \zeta(g,g^{-1}h)\) for \(\zeta\in
  \Contc(G^1\times_{\s,\rg}G^1)\) and \((g,h)\in
  G^1\times_{\rg,\rg}G^1\).
  Equation~\eqref{eq:integrated_form_explicit} describes \(L\colon
  \Contc(G^1)\to \Bound(\Hilm[F])\) by \(L(f)=T_{f_1}^* U T_{f_2}\)
  if \(f=\conj{f_1}\cdot f_2\) with \(f_1,f_2\in \Contc(G^1)\).
  In particular, we may choose \(k\in \Contc(G^1)\) with \(k=1\)
  on \(\supp(f)\) and compute \(L(f)\) as \(T_k^* U T_f\).  With
  our previous identifications, \(T_k^*\) is the operator from
  \(\Lt^2(G^1\times_{\rg,\rg}G^1,\s\circ\pr_2,\tilde\alpha\circ\alpha)\)
  to \(\Hilm[F]=\Lt^2(G^1,\s,\tilde\alpha)\) given by
  \[
  T_k^*(\zeta)(h)=\int_G \conj{k(g)}\zeta(g,h)\dd\alpha^{\rg(h)}(g)
  \]
  for all \(\zeta\in \Contc(G^1\times_{\rg,\rg}G^1)\) and \(h\in G^1\).
  Hence
  \begin{multline*}
    L(f)\xi(h)
    = T_k^*U(f\otimes \xi)(h)
    = \int_G\conj{k(g)}U(f\otimes\xi)(g,h)\dd\alpha^{\rg(h)}(g)
    \\= \int_G \conj{k(g)}f(g)\xi(g^{-1}h)\dd\alpha^{\rg(h)}(g)
    = \int_G f(g)\xi(g^{-1}h)\dd\alpha^{\rg(h)}(g)
    = (f*\xi)(h)
  \end{multline*}
  for all \(f\in \Contc(G^1)\subseteq \Cst(G,\alpha)\), \(\xi\in
  \Contc(G^1)\subseteq \Hilm[F]\) and \(h\in G^1\).  So \(L(f)\) is the
  operator of left convolution with~\(f\).  This is the usual
  definition of the regular representation.
\end{example}

\section{Disintegration}
\label{sec:disintegrate}

In this section, we construct a representation~\((\varphi,U)\)
of~\((G,\alpha)\) from a representation of~\(\Cst(G,\alpha)\).
Actually, we shall start with a more technical setting, allowing
densely defined representations of~\(\Contc(G^1)\).  Renault's
Disintegration Theorem also applies in this generality, and several
results need such representations.

\begin{definition}
  \label{def:prerepresentation}
  Let~\(\Hilm[F]\)
  be a Hilbert module over a \cstar{}algebra~\(D\)
  and let~\(\Hilm[F]_0\)
  be a vector space with a linear map
  \(\iota\colon \Hilm[F]_0\to\Hilm[F]\)
  with dense image.  Let \(\Hom(\Hilm[F]_0,\Hilm[F])\),
  be the vector space of all linear maps \(\Hilm[F]_0\to \Hilm[F]\).
  A \emph{pre-\alb{}representation} of~\(\Contc(G^1)\)
  on~\(\Hilm[F]\)
  is a linear map \(L\colon \Contc(G^1)\to \Hom(\Hilm[F]_0,\Hilm[F])\)
  such that
  \begin{enumerate}
  \item for all \(\xi, \eta\in \Hilm[F]_0\),
    the map \(f\mapsto \braket{\iota(\xi)}{L(f)\eta}_D\)
    from~\(\Contc(G^1)\)
    to~\(D\)
    is continuous in the inductive limit topology on~\(\Contc(G^1)\)
    and the norm topology on \(D\);
  \item
    \(\braket{L(f_1)\xi}{L(f_2)\eta}_D =
    \braket{\iota(\xi)}{L(f_1^**f_2)\eta}_D\)
    for all \(\xi,\eta\in \Hilm[F]_0\);
  \item the linear span of~\(L(f)\xi\)
    for \(f\in\Contc(G^1)\),
    \(\xi\in \Hilm[F]_0\) is dense in~\(\Hilm[F]\).
  \end{enumerate}
\end{definition}

We do not need the map~\(\iota\)
to be injective.  The assumptions in
Definition~\ref{def:prerepresentation} imply, however, that
\(L(f)\xi=0\)
for all \(f\in\Contc(G^1)\),
\(\xi\in\Hilm[F]_0\)
with \(\iota(\xi)=0\)
because \(\braket{L(f)\xi}{L(f_2)\xi_2}=0\)
for all \(f_2\in\Contc(G^1)\),
\(\xi_2\in\Hilm[F]_0\)
by~(2) and linear combinations of~\(L(f_2)\xi_2\)
are dense in~\(\Hilm[F]\)
by~(3).  So it would be no loss of generality to assume~\(\iota\)
to be injective: we may replace \(\iota\colon \Hilm[F]_0\to \Hilm[F]\)
by the injective linear map
\(\tilde\iota\colon \tilde\Hilm[F]_0\defeq \Hilm[F]_0/\ker(\iota)\to\Hilm[F]\),
\(\tilde\iota(\xi+\ker(\iota))\defeq \iota(\xi)\),
and \(L\)
by \(\tilde L\colon \Contc(G^1)\to \Hom(\tilde\Hilm[F]_0,\Hilm[F])\),
\(\tilde L(f)(\xi+\ker(\iota))\defeq L(f)\xi\).

Throughout this section, we fix a groupoid~\(G\)
with a Haar system~\(\alpha\)
and a pre-representation \((L,\Hilm[F]_0,\iota)\)
of~\(\Contc(G^1)\).
Disintegration will produce a representation~\((\varphi,U)\)
of~\((G,\alpha)\)
on~\(\Hilm[F]\).  First, we
construct the representation~\(\varphi\)
of~\(\Cont_0(G^0)\)
on~\(\Hilm[F]\).  Define
\(\rg^*(f_0)\cdot f_1\in\Contc(G^1)\)
for \(f_0\in\Contb(G^0)\),
\(f_1\in\Contc(G^1)\)
by \(\rg^*(f_0)\cdot f_1(g) \defeq f_0(\rg(g))\cdot f_1(g)\)
for all \(g\in G^1\).  Then define
\[
\varphi_0\colon
\Contb(G^0) \odot \Contc(G^1) \odot \Hilm[F]_0 \to \Hilm[F],\qquad
f_0\otimes f_1 \otimes \xi\mapsto L(\rg^*(f_0)\cdot f_1)\xi.
\]

\begin{lemma}
  \label{lem:disintegrate_varphi}
  There is a unique representation \(\varphi
\colon
  \Cont_0(G^0)\to\Bound(\Hilm[F])\) with
  \[
  \varphi(f_0)(L(f_1)\xi) = \varphi_0(f_0\otimes f_1\otimes\xi)
  \]
  for all \(f_0\in\Cont_0(G^0)\), \(f_1\in\Contc(G^1)\),
  \(\xi\in\Hilm[F]_0\).
\end{lemma}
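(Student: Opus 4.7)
The plan is to define $\varphi(f_0)$ first on the dense subspace $\Hilm[F]_0' \defeq \spn\{L(f_1)\xi : f_1\in\Contc(G^1),\, \xi\in\Hilm[F]_0\}$, check well-definedness and boundedness using properties (1)--(3), then extend by continuity. Uniqueness is automatic because $\Hilm[F]_0'$ is dense and $\varphi(f_0)$ must be bounded.

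The key algebraic identity to isolate is
\[
  f^* * (\rg^*(f_0)\cdot k) = (\rg^*(\conj{f_0})\cdot f)^* * k \qquad \text{in } \Contc(G^1),
\]
which follows from a direct calculation using that $\rg(h^{-1}g)=\s(h)$ when $(h,h^{-1}g)\in G^2$; equivalently, $\rg^*(f_0)$ is a left multiplier of the convolution algebra $(\Contc(G^1),*)$. Given this, \textbf{well-definedness} of $\varphi_0(f_0)\bigl(\sum_i L(f_1^{(i)})\xi_i\bigr) \defeq \sum_i L(\rg^*(f_0)\cdot f_1^{(i)})\xi_i$ follows: assuming $\sum_i L(f_1^{(i)})\xi_i=0$, for any $f\in\Contc(G^1)$ and $\eta\in\Hilm[F]_0$, property~(2) gives
\[
  \bbraket{L(f)\eta}{\sum_i L(\rg^*(f_0)\cdot f_1^{(i)})\xi_i}
  = \sum_i \bbraket{\iota(\eta)}{L(f^**\rg^*(f_0)\cdot f_1^{(i)})\xi_i}
  = \bbraket{L(\rg^*(\conj{f_0})\cdot f)\eta}{\textstyle\sum_i L(f_1^{(i)})\xi_i} = 0,
\]
and density of the vectors $L(f)\eta$ (property (3)) forces the sum to vanish. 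The same identity immediately yields the adjointness relation $\braket{v}{\varphi_0(f_0)w} = \braket{\varphi_0(\conj{f_0})v}{w}$ for $v,w\in\Hilm[F]_0'$, and multiplicativity $\varphi_0(f_0 f_0') = \varphi_0(f_0)\varphi_0(f_0')$ is trivial from the definition.

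The step I expect to be the \textbf{main obstacle} is \textbf{boundedness}, because we are in a Hilbert module, not a Hilbert space, so positivity has to be handled carefully. Applying the above symbolic calculation twice (or taking $f=\rg^*(\conj{g_0})f_1$) yields the crucial identity
\[
  \braket{v}{\varphi_0(g_0\conj{g_0})v} = \norm{\varphi_0(\conj{g_0})v}^2 \ge 0
\]
for every $g_0\in\Contb(G^0)$ and $v\in\Hilm[F]_0'$. Note that $\varphi_0$ is defined on all of $\Contb(G^0)$ on the dense subspace, not just $\Cont_0(G^0)$; this is what lets us use positive square roots. Given $f_0\in\Cont_0(G^0)$, set $c\defeq\norm{f_0}_\infty^2$ and choose $g_0\in\Contb(G^0)$ with $g_0\conj{g_0}=c-\abs{f_0}^2\ge0$. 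Then
\[
  c\braket{v}{v} - \braket{\varphi_0(f_0)v}{\varphi_0(f_0)v}
  = \braket{v}{\varphi_0(c - \abs{f_0}^2)v}
  = \braket{v}{\varphi_0(g_0\conj{g_0})v} \ge 0 \quad \text{in } D,
\]
so $\norm{\varphi_0(f_0)v}^2 \le c\norm{v}^2$ after taking norms in $D$. Hence $\varphi_0(f_0)$ is bounded by $\norm{f_0}_\infty$ and extends uniquely to an adjointable operator $\varphi(f_0)\in\Bound(\Hilm[F])$.

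Finally, $\varphi$ is a \Star{}homomorphism by continuity of the extensions and the algebraic identities on $\Hilm[F]_0'$, and it is \textbf{nondegenerate}: for any $L(f_1)\xi$, picking $f_0\in\Contc(G^0)$ with $f_0\equiv 1$ on the compact set $\rg(\supp f_1)$ gives $\rg^*(f_0)\cdot f_1=f_1$, hence $\varphi(f_0)L(f_1)\xi = L(f_1)\xi$, so $\varphi(\Cont_0(G^0))\Hilm[F]$ contains the dense subspace $\Hilm[F]_0'$.
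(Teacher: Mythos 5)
Your proposal is correct and follows essentially the same route as the paper: the crucial boundedness step --- completing $\abs{f_0}^2$ to $\norm{f_0}_\infty^2$ by the square of the bounded continuous function $\sqrt{\norm{f_0}_\infty^2-\abs{f_0}^2}$ and invoking positivity of the $D$\nb-valued inner product --- is exactly the paper's argument, as is the algebraic identity $f^**(\rg^*(f_0)k)=(\rg^*(\conj{f_0})f)^**k$ used for adjointability. Your separate well-definedness check and the explicit nondegeneracy via a Urysohn function are only minor repackagings of steps the paper leaves implicit in the norm estimate and in the identity $\Cont_0(G^0)\cdot\Contc(G^1)=\Contc(G^1)$.
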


\begin{proof}
  Let \(f_0\in\Contb(G^1)\).
  Define \(f_0'\in\Contb(G^1)\)
  by
  \[
  f_0'(g) \defeq \sqrt{\norm{f_0}_\infty^2 - \abs{f_0(g)}^2}
  \]
  for all \(g\in G^1\),
  so that \(f_0^* f_0 + (f_0')^* f_0' = \norm{f_0}_\infty^2\cdot 1\)
  in~\(\Contb(G^1)\).
  If \(f_1\in\Contc(G^1)\), \(\xi\in\Hilm[F]_0\), then
  \begin{align*}
    &\phantom{{}\le{}}
    \braket{\varphi_0(f_0\otimes f_1\otimes\xi)}
    {\varphi_0(f_0\otimes f_1\otimes\xi)}
    \\&\le
    \braket{\varphi_0(f_0\otimes f_1\otimes\xi)}
    {\varphi_0(f_0\otimes f_1\otimes\xi)}
    + \braket{\varphi_0(f'_0\otimes f_1\otimes\xi)}
    {\varphi_0(f'_0\otimes f_1\otimes\xi)}
    \\&=
    \braket{L(\rg^*(f_0)f_1)\xi} {L(\rg^*(f_0) f_1)\xi}
    + \braket{L(\rg^*(f_0')f_1)\xi} {L(\rg^*(f_0') f_1)\xi}
    \\&= \braket{\iota(\xi)}
    {L(f_1^* * (\rg^*(f_0^* f_0 + (f_0')^* f_0') f_1))\xi}
    \\&= \norm{f_0}^2 \braket{\iota(\xi)} {L(f_1^* * f_1)\xi}
    = \norm{f_0}^2 \braket{L(f_1)\xi} {L(f_1)\xi}.
  \end{align*}
  Since \(L(\Contc(G^1))(\Hilm[F]_0)\)
  is linearly dense in~\(\Hilm[F]\),
  there is a unique bounded linear operator
  \(\varphi(f_0)\colon \Hilm[F]\to\Hilm[F]\) with
  \[
  \varphi(f_0)(L(f_1)\xi)
  = \varphi_0(f_0\otimes f_1\otimes\xi)
  = L(\rg^*(f_0)f_1) \xi
  \]
  for all \(f_1\in\Contc(G^1)\), \(\xi\in\Hilm[F]_0\).
  The operator \(\varphi(f_0)\) is adjointable with
  adjoint~\(\varphi(f_0^*)\) because \(f_1^* * (\rg^*(f_0)f_1') =
  (\rg^*(f_0^*)f_1)^* * f_1'\) in~\(\Contc(G^1)\) for all
  \(f_1,f_1'\in\Contc(G^1)\).  The map \(\varphi\colon
  \Contb(G^1)\to\Bound(\Hilm[F])\) is linear and multiplicative
  because the action of~\(\Contb(G^1)\) on~\(\Contc(G^1)\) by
  pointwise multiplication is a module structure.  The restriction
  of~\(\varphi\) to~\(\Cont_0(G^1)\) is nondegenerate because
  \(\Cont_0(G^1)\cdot \Contc(G^1) = \Contc(G^1)\) and
  \(L(\Contc(G^1))(\Hilm[F]_0)\) is linearly dense in~\(\Hilm[F]\).
\end{proof}

Next, we are going to construct linear maps with dense range
\begin{align*}
  \tau_\s&\colon\Contc(G^2)\odot \Hilm[F]_0 \to
  \Lt^2(G^1,\s,\tilde\alpha) \otimes_\varphi \Hilm[F],\\
  \tau_\rg&\colon\Contc(G^2)\odot \Hilm[F]_0 \to
  \Lt^2(G^1,\rg,\alpha) \otimes_\varphi \Hilm[F],
\end{align*}
such that
\[
\braket{\tau_\s(x)}{\tau_\s(x)} =
\braket{\tau_\rg(x)}{\tau_\rg(x)}
\]
for all \(x\in \Contc(G^2)\odot\Hilm[F]_0\).  Hence there is a
unique unitary operator
\[
U\colon \Lt^2(G^1,\s,\tilde\alpha) \otimes_\varphi \Hilm[F] \congto
\Lt^2(G^1,\rg,\alpha) \otimes_\varphi \Hilm[F]
\]
with \(U\bigl(\tau_\s(x)\bigr) = \tau_\rg(x)\) for all
\(x\in\Contc(G^2)\odot \Hilm[F]_0\).  We will check later
that~\((\varphi,U)\) is a representation of~\((G,\alpha)\).
The construction of \(\tau_\s,\tau_\rg\) is a special
case of the following lemma.

\begin{lemma}
  \label{lem:dense_in_HilmF}
  Let \(p\colon X\to G^0\) be a continuous map and let~\(\lambda\)
  be a continuous family of measures along~\(p\).  The map
  \[
  \tau\colon \Contc(X) \odot \Contc(G^1) \odot \Hilm[F]_0 \to
  \Lt^2(X,p,\lambda) \otimes_\varphi \Hilm[F],\qquad
  f_0\otimes f_1\otimes \xi \mapsto f_0\otimes L(f_1)\xi,
  \]
  extends uniquely to a linear map
  \[
  \bar\tau\colon \Contc(X\times_{p,G^0,\rg} G^1) \odot
  \Hilm[F]_0 \to \Lt^2(X,p,\lambda) \otimes_\varphi \Hilm[F],
  \]
  such that \(f\mapsto \bar\tau(f\otimes \xi)\)
  for \(f\in\Contc(X\times_{p,G^0,\rg} G^1)\)
  is continuous in the inductive limit topology for all
  \(\xi\in\Hilm[F]_0\).
  If \(F_1,F_2\in\Contc(X\times_{p,G^0,\rg} G^1)\),
  \(\xi_1,\xi_2\in\Hilm[F]_0\), then
  \begin{equation}
    \label{eq:innprod_dense_in_HilmF0}
    \braket{\bar\tau(F_1\otimes \xi_1)}{\bar\tau(F_2\otimes \xi_2)}
    = \braket{\iota(\xi_1)}{L(\braket{F_1}{F_2})\xi_2},
  \end{equation}
  where \(\braket{F_1}{F_2}\in\Contc(G^1)\) is defined by
  \begin{align}
    \label{eq:innprod_dense_in_HilmF}
    \braket{F_1}{F_2}(g) &\defeq \iint \conj{F_1(x,h^{-1})}
    F_2(x,h^{-1} g) \dd\lambda^{\s(h)}(x)\dd\alpha^{\rg(g)}(h)
    \\ \notag &= \iint \conj{F_1(x,h)}
    F_2(x,h g) \dd\lambda^{\rg(h)}(x) \dd\tilde\alpha_{\rg(g)}(h).
  \end{align}
\end{lemma}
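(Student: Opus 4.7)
The plan is to first define $\bar\tau$ on those $F \in \Contc(X \times_{p,G^0,\rg} G^1)$ of the elementary form $F(x,g) = f_0(x) f_1(g)$ with $f_0 \in \Contc(X)$ and $f_1 \in \Contc(G^1)$ via $\bar\tau(F \otimes \xi) \defeq f_0 \otimes L(f_1) \xi$, then verify~\eqref{eq:innprod_dense_in_HilmF0} on the span of such elementary $F$, and finally extend $\bar\tau$ by inductive-limit continuity to all of $\Contc(X \times_{p,G^0,\rg} G^1) \odot \Hilm[F]_0$. That this definition descends to elementary \emph{functions} rather than to ordered pairs $(f_0, f_1)$ amounts to balancing over $\Cont_0(G^0)$: for $\psi \in \Cont_0(G^0)$, the two functions $(x,g) \mapsto \psi(p(x)) f_0(x) f_1(g)$ and $(x,g) \mapsto f_0(x) \psi(\rg(g)) f_1(g)$ agree on the fibre product, while Lemma~\ref{lem:disintegrate_varphi} yields
\[
p^*(\psi) f_0 \otimes L(f_1)\xi = f_0 \otimes \varphi(\psi) L(f_1) \xi = f_0 \otimes L(\rg^*(\psi) f_1) \xi
\]
in $\Lt^2(X,p,\lambda) \otimes_\varphi \Hilm[F]$.

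To verify~\eqref{eq:innprod_dense_in_HilmF0} on elementary tensors $F_i = f_0^{(i)} \otimes f_1^{(i)}$, I would expand the left-hand side via the inner product on the balanced tensor product as $\braket{L(f_1^{(1)})\xi_1}{\varphi(\braket{f_0^{(1)}}{f_0^{(2)}}) L(f_1^{(2)}) \xi_2}$, apply Lemma~\ref{lem:disintegrate_varphi} to rewrite this inner slot as $L(\rg^*(\braket{f_0^{(1)}}{f_0^{(2)}}) f_1^{(2)})\xi_2$, and then invoke axiom~(2) of a pre-representation to reach $\braket{\iota(\xi_1)}{L((f_1^{(1)})^* * (\rg^*(\braket{f_0^{(1)}}{f_0^{(2)}}) f_1^{(2)}))\xi_2}$. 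A direct unwinding of the convolution, using $\rg(h^{-1}k) = \s(h)$ to pull $\rg^*(\braket{f_0^{(1)}}{f_0^{(2)}})$ inside the $\alpha$-integral and writing out $\braket{f_0^{(1)}}{f_0^{(2)}}$ as a $\lambda$-integral, identifies the function inside $L$ with $\braket{F_1}{F_2}$ in the first form of~\eqref{eq:innprod_dense_in_HilmF}; the second form then follows by the substitution $h \mapsto h^{-1}$ that swaps $\alpha^{\rg(g)}$ with $\tilde\alpha_{\rg(g)}$.

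For the extension step, elementary functions span a linearly dense subspace of $\Contc(X \times_{p,G^0,\rg} G^1)$ in the inductive-limit topology, by a partition-of-unity argument as in the proof of Proposition~\ref{prop:functoriality-of-corr}. Formula~\eqref{eq:innprod_dense_in_HilmF} makes $(F_1, F_2) \mapsto \braket{F_1}{F_2}$ continuous from the inductive-limit topologies on the source into the inductive-limit topology on $\Contc(G^1)$ (supports stay in a common compact set and uniform convergence passes through the integrals), and axiom~(1) makes $f \mapsto \braket{\iota(\xi)}{L(f)\xi}$ continuous from the inductive-limit topology into the norm topology on $D$. Combined with~\eqref{eq:innprod_dense_in_HilmF0}, this gives a norm bound on $\bar\tau(F \otimes \xi)$ depending continuously on $F$ in the inductive-limit topology, so $\bar\tau$ extends uniquely and linearly to $\Contc(X \times_{p,G^0,\rg} G^1) \odot \Hilm[F]_0$; identity~\eqref{eq:innprod_dense_in_HilmF0} then persists by continuity for arbitrary $F_1, F_2, \xi_1, \xi_2$. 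The main obstacle is the bookkeeping identification of the convolution $(f_1^{(1)})^* * (\rg^*(\braket{f_0^{(1)}}{f_0^{(2)}}) f_1^{(2)})$ with $\braket{F_1}{F_2}$; once that is done, the rest is formal, relying on continuity, density, and Lemma~\ref{lem:disintegrate_varphi}.
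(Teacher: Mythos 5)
Your proposal is correct and follows essentially the same route as the paper: verify \eqref{eq:innprod_dense_in_HilmF0} on elementary tensors by combining the balanced-tensor-product inner product, Lemma~\ref{lem:disintegrate_varphi} and axiom~(2) of a pre-representation, identify the resulting convolution with \(\braket{F_1}{F_2}\), and then extend by density of elementary functions (the paper uses Stone--Weierstraß on \(\Cont_0(V\times_{p,G^0,\rg}W)\) and a Cauchy-sequence argument, which is the same continuity-plus-compact-supports mechanism you invoke). Your extra remark on balancing over \(\Cont_0(G^0)\) is harmless but not needed, since the isometry identity already shows the construction depends only on the restricted function.
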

\begin{proof}
  Let \(f_1,f_3\in\Contc(X)\), \(f_2,f_4\in\Contc(G^1)\),
  \(\xi_1,\xi_2\in \Hilm[F]_0\).  We compute
  \begin{align*}
    \braket{\tau(f_1\otimes f_2\otimes \xi_1)}
    {\tau(f_3\otimes f_4\otimes \xi_2)}
    &= \braket{L(f_2)\xi_1}{\varphi(\braket{f_1}{f_3}) L(f_4)\xi_2}
    \\&= \braket{\iota(\xi_1)}{L(f_2^* * (\rg^*(\braket{f_1}{f_3}) f_4))\xi_2}.
  \end{align*}
  Here \(f_2^* * (\rg^*(\braket{f_1}{f_3}) f_4) \in \Contc(G^1)\)
  is given by
  \begin{align*}
    f_2^* * (\rg^*(\braket{f_1}{f_3}) f_4)(g)
    &= \int_{G^1} f_2^*(h) (\rg^*(\braket{f_1}{f_3}) f_4)(h^{-1} g)
    \dd\alpha^{\rg(g)}(h)
    \\&= \int_{G^1} \int_X \conj{f_2(h^{-1})} \conj{f_1(x)} f_3(x) f_4(h^{-1} g)
    \dd\lambda^{\rg(h^{-1} g)}(x) \dd\alpha^{\rg(g)}(h)
    \\&= \int_{G^1} \int_X \conj{f_2(h)} \conj{f_1(x)} f_3(x) f_4(h g)
    \dd\lambda^{\rg(h)}(x) \dd\tilde\alpha_{\rg(g)}(h).
  \end{align*}
  This is the right hand side in~\eqref{eq:innprod_dense_in_HilmF} if
  we let \(F_1(x,g) \defeq f_1(x) f_2(g)\)
  and \(F_2(x,g) \defeq f_3(x) f_4(g)\).
  Thus~\eqref{eq:innprod_dense_in_HilmF0} with \(\bar\tau=\tau\)
  holds for \(F_1,F_2\)
  in the image of \(\Contc(X)\odot \Contc(G^1)\)
  in \(\Contc(X\times_{p,G^0,\rg} G^1)\).

  Let \(F\in\Contc(X\times_{p,G^0,\rg} G^1)\) and
  \(\xi\in\Hilm[F]_0\).  Let \(V\) and~\(W\) be open, relatively compact
  subsets of~\(X\) and~\(G^1\), respectively, so that
  \(V\times_{p,G^0,\rg} W\) is a neighbourhood of the support
  of~\(F\), that is, \(F\in\Cont_0(V \times_{p,G^0,\rg} W)\).  The
  linear span of functions of the form \(f_1\otimes f_2\) with
  \(f_1\in\Cont_0(V) \subseteq \Contc(X)\), \(f_2\in\Cont_0(W)
  \subseteq \Contc(G^1)\) is dense in the Banach space
  \(\Cont_0(V\times_{p,G^0,\rg} W)\subseteq
  \Contc(X\times_{p,G^0,\rg} G^1)\) by the Stone--Weierstraß
  Theorem.  Hence there is a sequence~\((F_n)_{n\in\N}\) in
  \(\Cont_0(V)\odot \Cont_0(W)\) whose image in
  \(\Cont_0(V\times_{p,G^0,\rg} W)\) converges towards~\(F\).  We
  claim that~\(\tau(F_n\otimes\xi)\) is a Cauchy sequence
  in~\(\Lt^2(X,p,\lambda) \otimes_\varphi \Hilm[F]\).  Then we are
  going to define
  \(\bar\tau(F\otimes\xi) = \lim \tau(F_n\otimes\xi)\).

  We may use~\eqref{eq:innprod_dense_in_HilmF0} to compute
  \(\braket{\tau(F_n\otimes\xi)}{\tau(F_m\otimes\xi)}\) for all
  \(n,m\in\N\) because it holds for elementary tensors in the place
  of \(F_n\) and~\(F_m\).
  The continuity assumption for~\(L\) in
  Definition~\ref{def:prerepresentation} shows that this is a
  continuous bilinear map in the two entries \(F_n,F_m\).
  Therefore,
  \[
  \lim_{n,m\to\infty}
  \braket{\tau(F_n\otimes\xi)}{\tau(F_m\otimes\xi)}
  = \braket{\iota(\xi)}{L(\braket{F}{F})\xi}
  \]
  with \(\braket{F}{F}\in\Contc(G^1)\) as
  in~\eqref{eq:innprod_dense_in_HilmF}.  Hence
  \begin{multline*}
    \norm{\tau(F_n\otimes\xi) - \tau(F_m\otimes\xi)}^2
    =
    \braket{\tau(F_n\otimes\xi)}{\tau(F_n\otimes\xi)} -
    \braket{\tau(F_n\otimes\xi)}{\tau(F_m\otimes\xi)} \\-
    \braket{\tau(F_m\otimes\xi)}{\tau(F_n\otimes\xi)} +
    \braket{\tau(F_m\otimes\xi)}{\tau(F_m\otimes\xi)}
  \end{multline*}
  converges to~\(0\)
  for \(n,m\to\infty\).
  Thus \(\tau(F_n\otimes\xi)\)
  is a Cauchy sequence in
  \(\Lt^2(X,p,\lambda) \otimes_\varphi \Hilm[F]\).
  We let~\(\bar\tau(F\otimes\xi)\)
  be its limit.  This does not depend on the choice of \(V,W\)
  and~\((F_n)\)
  because mixing two sequences of this type gives a Cauchy sequence as
  well, by the same argument.  The map
  \((F,\xi)\mapsto \bar\tau(F\otimes\xi)\)
  is bilinear and hence extends to a linear map
  \(\bar\tau\colon \Contc(X\times_{p,G^0,\rg} G^1) \odot \Hilm[F]_0
  \to \Lt^2(X,p,\lambda)\otimes_\varphi \Hilm[F]\).
  It has dense image because already~\(\tau\)
  has dense image.  The formula~\eqref{eq:innprod_dense_in_HilmF0}
  holds everywhere by continuity and because it holds for~\(F_1,F_2\)
  in the dense subspace \(\Contc(X)\odot \Contc(G^1)\)
  of \(\Contc(X\times_{p,G^0,\rg} G^1)\).
\end{proof}

We apply Lemma~\ref{lem:dense_in_HilmF} to the maps \(\rg,s\colon
G^1 \rightrightarrows G^0\) with the measure families
\(\alpha,\tilde\alpha\).  This gives linear maps with dense image
\begin{align*}
  \tau_\s&\colon \Contc(G^1 \times_{\s,G^0,\rg} G^1) \odot
  \Hilm[F]_0 \to
  \Lt^2(G^1,\s,\tilde\alpha) \otimes_\varphi \Hilm[F],\\
  \tau_\rg&\colon \Contc(G^1 \times_{\rg,G^0,\rg} G^1) \odot
  \Hilm[F]_0 \to
  \Lt^2(G^1,\rg,\alpha) \otimes_\varphi \Hilm[F].
\end{align*}
We identify \(G^1 \times_{\s,G^0,\rg} G^1 = G^2\) with \(G^1
\times_{\rg,G^0,\rg} G^1\) through the homeomorphisms \((g,h)\mapsto
(g, g^{\pm 1} h)\) going back and forth.  Let \(F_1,F_2\in\Contc(G^2)\),
\(\xi_1,\xi_2\in\Hilm[F]_0\).  Then
\begin{align*}
  \braket{\tau_\s(F_1\otimes \xi_1)}
  {\tau_\s(F_2\otimes \xi_2)}
  &= \braket{\iota(\xi_1)} {L(\braket{F_1}{F_2}_\s)\xi_2},\\
  \braket{\tau_\rg(F_1\otimes \xi_1)}
  {\tau_\rg(F_2\otimes \xi_2)}
  &= \braket{\iota(\xi_1)} {L(\braket{F_1}{F_2}_\rg)\xi_2}
\end{align*}
with
\begin{align*}
  \braket{F_1}{F_2}_\s(k) &=
  \iint \conj{F_1(x,h,x h)} F_2(x,h k,x h k) \dd\tilde\alpha_{\rg(h)}(x)
  \dd\tilde\alpha_{\rg(k)}(h),\\
  \braket{F_1}{F_2}_\rg(k) &=
  \iint \conj{F_1(x,x^{-1} h, h)} F_2(x,x^{-1} h k, h k)
  \dd\alpha^{\rg(h)}(x) \dd\tilde\alpha_{\rg(k)}(h)
\end{align*}
by~\eqref{eq:innprod_dense_in_HilmF}.  Here we described points
in~\(G^2\) through \(x,h,y\in G^1\) with \(x h = y\) to
clarify the identification of \(G^1\times_{\rg,G^0,\rg} G^1\)
with~\(G^2\).  Equation~\eqref{eq:compare_integrals_rep} applied to
the function
\[
f^k(g,h) \defeq \conj{F_1(g,h)} F_2(g,h k)
\]
in \(\Contc(G^1\times_{\s,G^0,\rg} G_{\rg(k)})\) for fixed \(k\in
G^1\) implies \(\braket{F_1}{F_2}_\s(k) = \braket{F_1}{F_2}_\rg(k)\)
for all \(k\in G^1\).  Hence \(\tau_\s\) and~\(\tau_\rg\) induce the
same inner product on \(\Contc(G^2)\odot \Hilm[F]_0\).  So there is
a unique unitary
\[
U\colon \Lt^2(G^1,\s,\tilde\alpha) \otimes_\varphi \Hilm[F] \congto
\Lt^2(G^1,\rg,\alpha) \otimes_\varphi \Hilm[F]
\]
with \(U\tau_\s(x) = \tau_\rg(x)\) for all \(x\in\Contc(G^2)\otimes
\Hilm[F]_0\).
More precisely, \(U\tau_\s(x) = \tau_\rg(\upsilon(x))\), where
\(\upsilon\colon \Contc(G^1\times_{\s,\rg}G^1)\odot\Hilm[F]_0\congto
\Contc(G^1\times_{\rg,\rg}G^1)\odot\Hilm[F]_0\) is the isomorphism
induced by the homeomorphism \(\Upsilon\colon
G^1\times_{\s,\rg}G^1\congto G^1\times_{\rg,\rg}G^1\),
\((g,h)\mapsto (g,gh)\) in~\eqref{eq:fundamental-homeo}.

\begin{proposition}
  \label{pro:disintegrate}
  The pair~\((\varphi,U)\)
  associated to a pre-representation~\(L\) of~\(\Contc(G^1)\)
  is a representation of~\((G,\alpha)\).
\end{proposition}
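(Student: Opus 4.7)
The plan is to verify the two requirements of Definitions~\ref{def:representation1} and~\ref{def:representation}: first, that $U$ intertwines the left $\Cont_0(G^1)$-actions on $\Lt^2(G^1,\s,\tilde\alpha)\otimes_\varphi\Hilm[F]$ and $\Lt^2(G^1,\rg,\alpha)\otimes_\varphi\Hilm[F]$, and second, that it satisfies the cocycle condition $d_1^*(U)=d_2^*(U)\circ d_0^*(U)$.

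The intertwining property is essentially built into the construction of~$U$. By definition, $U\tau_\s(x)=\tau_\rg(\upsilon(x))$, where~$\upsilon$ is the isomorphism induced by the homeomorphism $\Upsilon\colon(g,h)\mapsto(g,gh)$ from~\eqref{eq:fundamental-homeo}, which preserves the first coordinate. Unpacking Lemma~\ref{lem:dense_in_HilmF}, multiplication by $f\in\Cont_0(G^1)$ on $\Lt^2(G^1,\s,\tilde\alpha)\otimes_\varphi\Hilm[F]$ corresponds under~$\tau_\s$ to the operation $F\mapsto\bigl((g,h)\mapsto f(g)F(g,h)\bigr)$, and analogously under~$\tau_\rg$. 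Since~$\upsilon$ commutes with such first-coordinate multiplication, density of the ranges of $\tau_\s$ and $\tau_\rg$ yields $UM_f=M_fU$ for every $f\in\Cont_0(G^1)$.

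For the cocycle condition, I would apply Lemma~\ref{lem:dense_in_HilmF} with $X=G^2$ and $p=v_i$ for each $i\in\{0,1,2\}$, producing maps $\bar\tau^{v_i}\colon\Contc(G^2\times_{v_i,G^0,\rg}G^1)\odot\Hilm[F]_0\to\Lt^2(G^2,v_i,\mu_i)\otimes_\varphi\Hilm[F]$ with dense image. The three fibre products $G^2\times_{v_i,G^0,\rg}G^1$ are all parametrised by the space of composable triples $G^3=\{(g,h,k):\s(g)=\rg(h),\,\s(h)=\rg(k)\}$ through bijections induced by the groupoid inverse. Unwinding the definition $d_i^*(U)=\Id\otimes U$ together with the canonical isomorphisms in Figure~\ref{fig:corr_G2_G0} shows that each $d_i^*(U)$ acts on these dense images by a map induced from a homeomorphism of $G^3$, exactly analogous to the homeomorphisms $d_i^*(\Upsilon)$ constructed in Section~\ref{sec:regular-rep}. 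Since $d_2^*(\Upsilon)\circ d_0^*(\Upsilon)=d_1^*(\Upsilon)$ is just associativity of composition in~$G$, the desired identity $d_1^*(U)=d_2^*(U)\circ d_0^*(U)$ follows on a dense subspace and hence everywhere.

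The main obstacle is the bookkeeping required to trace $d_i^*(U)$ through the three-fold tensor products, to keep track of the canonical isomorphisms between $\Lt^2$-correspondences and their composites, and to match the various fibre products over $G^0$ via changes of variables using the groupoid inverse; once those identifications are in place the cocycle is a formal consequence of associativity in~$G$, mirroring the regular representation computation. If this direct approach becomes too unwieldy, an equivalent route is to compare inner products: for $\zeta_1$ in the image of $\bar\tau^{v_2}$ and $\zeta_2$ in the image of $\bar\tau^{v_0}$, both $\braket{\zeta_2}{d_1^*(U)\zeta_1}$ and $\braket{\zeta_2}{d_2^*(U)\circ d_0^*(U)\zeta_1}$ reduce by~\eqref{eq:innprod_dense_in_HilmF0} to expressions $\braket{\iota(\xi_1)}{L(\Psi)\xi_2}$ with the same $\Psi\in\Contc(G^1)$, obtained by iterated integration along~$\alpha$ and~$\tilde\alpha$ and an application of~\eqref{eq:compare_integrals_rep}, forcing equality on a dense subspace.
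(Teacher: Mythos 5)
Your proposal is correct and follows essentially the same route as the paper: the intertwining of the $\Cont_0(G^1)$-actions is read off from the fact that $\Upsilon$ preserves the first coordinate and that $\tau_\s$, $\tau_\rg$ transport pointwise multiplication to first-coordinate multiplication, and the cocycle identity is verified on the dense images supplied by Lemma~\ref{lem:dense_in_HilmF} by exhibiting each $d_i^*(U)$ as induced from a homeomorphism of the space of composable triples, so that $d_1^*(U)=d_2^*(U)\circ d_0^*(U)$ reduces to associativity in~$G$. The paper simply carries out the bookkeeping you defer, writing the three homeomorphisms $\Upsilon_i$ of $G^2\times_{v_j,G^0,\rg}G^1$ explicitly and checking $\Upsilon_1=\Upsilon_0\circ\Upsilon_2$.
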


\begin{proof}
  First we check that~\(U\) is an isomorphism of correspondences.
  That is, it is a \(\Cont_0(G^1)\)-module homomorphism for the
  canonical left \(\Cont_0(G^1)\)-module structures by pointwise
  multiplication on the first tensor factors
  \(\Lt^2(G^1,\s,\tilde\alpha)\) and \(\Lt^2(G^1,\rg,\alpha)\).  The
  maps \(\tau_\s\) and~\(\tau_\rg\) are \(\Cont_0(G^1)\)-module
  homomorphism if we let \(f_1\in \Cont_0(G^1)\) act on \(f_2\in
  \Contc(G^1\times_{\s,\rg}G^1)\odot\Hilm[F]_0\) and \(f_2'\in
  \Contc(G^1\times_{\rg,\rg}G^1)\odot\Hilm[F]_0\) by
  \[
  (f_1\cdot f_2)(g,h) \defeq f_1(g)f_2(g,h),\qquad
  (f_1\cdot f_2')(g,k) \defeq f_1(g)f_2'(g,k).
  \]
  These give the same \(\Cont_0(G^1)\)-module structure on
  \(\Contc(G^2)\odot\Hilm[F]_0\).  So the unitary~\(U\) is a
  \(\Cont_0(G^1)\)-module homomorphism.

  The unitaries~\(d_i^*(U)\) in \eqref{eq:d1_U}--\eqref{eq:d2_U} act
  between \(\Lt^2(G^2,v_j,\mu_j) \otimes_\varphi\Hilm[F]\) for
  \(j\in\{0,1,2\}\setminus\{i\}\).  Lemma~\ref{lem:dense_in_HilmF}
  provides a linear map with dense range
  \[
  \tau_{v_j}\colon
  \Contc(G^2\times_{v_j,G^0,\rg} G^1)\odot\Hilm[F]_0 \to
  \Lt^2(G^2,v_j,\mu_j) \otimes_\varphi\Hilm[F].
  \]
  The elements of \(G^2\times_{v_j,G^0,\rg} G^1\) are configurations
  of three arrows \((g,h,x)\) with \((g,h)\in G^2\) and
  \(\rg(x)=\rg(g)\) for \(j=0\), \(\rg(x)=\s(g)=\rg(h)\) for \(j=1\),
  and \(\rg(x)=\s(h)\) for \(j=2\), respectively.  In particular,
  \(G^2\times_{v_2,G^0,\rg} G^1\) is the space~\(G^3\) of triples of
  composable arrows~\((g,h,x)\) in~\(G\).

  To define~\(d_0^*(U)\) as in~\eqref{eq:d0_U}, we identify \(G^2
  \times_{v_2,G^0,\rg} G^1 \cong G^1 \times_{\s,G^0,\rg\circ\pr_1}
  (G^1\times_{\s,\rg} G^1)\) and \(G^2 \times_{v_1,G^0,\rg} G^1
  \cong G^1 \times_{\s,G^0,\rg\circ\pr_1} (G^1\times_{\rg,\rg}
  G^1)\).  Since~\(U\) is induced by the homeomorphism
  \(G^1\times_{\rg,\rg} G^1 \to G^1\times_{\s,\rg} G^1\),
  \((h,k)\mapsto (h,h^{-1} k)\), the operator \(d_0^*(U)\) is the
  unique extension of
  \[
  \Contc(G^2\times_{v_2,G^0,\rg} G^1)\odot\Hilm[F]_0 \to
  \Contc(G^2\times_{v_1,G^0,\rg} G^1)\odot\Hilm[F]_0,\qquad
  f\otimes \xi\mapsto (f\circ \Upsilon_0)\otimes \xi,
  \]
  with \(\Upsilon_0(g,h,x) \defeq (g, h, h^{-1} x)\).  Similarly,
  \(d_2^*(U)\) is the unique extension of
  \[
  \Contc(G^2\times_{v_1,G^0,\rg} G^1)\odot\Hilm[F]_0 \to
  \Contc(G^2\times_{v_0,G^0,\rg} G^1)\odot\Hilm[F]_0,\qquad
  f\otimes \xi\mapsto (f\circ \Upsilon_2)\otimes \xi,
  \]
  with \(\Upsilon_2(g,h,x) \defeq (g, h, g^{-1} x)\).  And~\(d_1^*(U)\)
  is the unique extension of
  \[
  \Contc(G^2\times_{v_2,G^0,\rg} G^1)\odot\Hilm[F]_0 \to
  \Contc(G^2\times_{v_0,G^0,\rg} G^1)\odot\Hilm[F]_0,\qquad
  f\otimes \xi\mapsto (f\circ \Upsilon_1)\otimes \xi,
  \]
  with \(\Upsilon_1(g,h,x) \defeq (g, h, (g h)^{-1} x)\).  The
  equation \(d_2^*(U) d_0^*(U) = d_1^*(U)\) follows from \(\Upsilon_1
  = \Upsilon_0 \circ \Upsilon_2\), which is the associativity of the
  multiplication in~\(G\).
\end{proof}

\section{Integration versus disintegration}
\label{sec:back_and_forth}

This section finishes the proof of
Theorem~\ref{the:universal_groupoid_Haus}.  Let~\(\Hilm[F]\) be a
Hilbert module over a \cstar{}algebra~\(D\).  Given a
representation of~\((G,\alpha)\) on~\(\Hilm[F]\), we have
constructed a representation of the convolution algebra
\(\Contc(G^1)\) bounded with respect to the \(I\)\nb-norm in
Section~\ref{sec:integrate}; this extends to a representation
of~\(\Cst(G,\alpha)\).  Conversely, given a representation
of~\(\Cst(G,\alpha)\) or merely a pre-representation of
\(\Contc(G^1)\) as in Definition~\ref{def:prerepresentation},
we have constructed a representation of~\((G,\alpha)\) in
Section~\ref{sec:disintegrate}.  First we prove that these two
constructions are inverse to each other.  Hence we get the asserted
bijection between representations of \((G,\alpha)\)
and~\(\Cst(G,\alpha)\).  Then we check that the bijection has the
two naturality properties in
Theorem~\ref{the:universal_groupoid_Haus}.

\begin{proposition}
  \label{pro:back_and_forth_from_L}
  Let~\((L,\Hilm[F]_0,\iota)\) be a pre-representation
  of~\(\Contc(G^1)\) on~\(\Hilm[F]\).  Let~\((\varphi,U)\) be its
  disintegration.  Then the integrated form~\(L'\) of~\((\varphi,U)\)
  satisfies \(L'(f)(\iota(\xi)) = L(f)(\xi)\) for all
  \(f\in\Contc(G^1)\), \(\xi\in\Hilm[F]_0\), and
 \(L'(f)(L(f_2)\xi) = L(f*f_2)(\xi)\) for all
 \(f,f_2\in\Contc(G^1)\), \(\xi\in\Hilm[F]_0\).
\end{proposition}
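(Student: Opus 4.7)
The plan is to prove the second identity first by unwinding the explicit formula for $L'$, then deduce the first by testing against a dense set of vectors. For the second identity, I write $f = \overline{f_1}\cdot f_2'$ as in Lemma~\ref{lem:Lf_well-defined}, so that $L'(f) = T_{f_1}^* U T_{f_2'}$ by~\eqref{eq:integrated_form_explicit}. The definition of $\tau_\s$ on elementary tensors gives $T_{f_2'}(L(f_2)\xi) = \tau_\s(f_2'\otimes f_2\otimes\xi)$, and the defining property $U\tau_\s(x)=\tau_\rg(\upsilon(x))$ from Proposition~\ref{pro:disintegrate}, with $\upsilon$ induced by $\Upsilon(g,h)=(g,gh)$, then yields $UT_{f_2'}(L(f_2)\xi) = \tau_\rg(F\otimes\xi)$ where $F(a,k)\defeq f_2'(a)f_2(a^{-1}k)$.

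The technical heart is computing $T_{f_1}^*\tau_\rg(F\otimes\xi)$, which is also the main obstacle. Since $F$ is not an elementary tensor, I approximate it in the inductive limit topology of $\Contc(G^1\times_{\rg,\rg}G^1)$ by finite sums $F_n = \sum_i f_{1,n}^{(i)}\otimes f_{2,n}^{(i)}$ of elementary tensors, using the Stone--Weierstra\ss\ argument from the proof of Lemma~\ref{lem:dense_in_HilmF}. For each approximant, Lemma~\ref{lem:disintegrate_varphi} computes $T_{f_1}^*\tau_\rg(F_n\otimes\xi) = L(G_n)\xi$, where $G_n(b) = \int_{G^{\rg(b)}}\overline{f_1(a)}F_n(a,b)\dd\alpha^{\rg(b)}(a)$; in the inductive limit topology one has $G_n\to G$ with $G(b) = \int_{G^{\rg(b)}}\overline{f_1(a)}f_2'(a)f_2(a^{-1}b)\dd\alpha^{\rg(b)}(a) = (f*f_2)(b)$. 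The continuity statement in Lemma~\ref{lem:dense_in_HilmF} and boundedness of $T_{f_1}^*$ give norm-convergence $L(G_n)\xi\to T_{f_1}^*\tau_\rg(F\otimes\xi)$ in $\Hilm[F]$; pairing with $\iota(\eta)$ for $\eta\in\Hilm[F]_0$ and using the weak continuity axiom Definition~\ref{def:prerepresentation}(1) shows the same limit equals $L(f*f_2)\xi$, where density of $\iota(\Hilm[F]_0)$ closes the gap between weak and norm convergence. This establishes equation~(2).

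For the first identity, I test both sides against the dense set $\{L(h)\zeta:h\in\Contc(G^1),\zeta\in\Hilm[F]_0\}$. Using~(2) and that $L'$ is a \Star{}homomorphism (Proposition~\ref{pro:Lf_star-representation}),
\[
\braket{L(h)\zeta}{L'(f)\iota(\xi)} = \braket{L'(f^*)L(h)\zeta}{\iota(\xi)} = \braket{L(f^**h)\zeta}{\iota(\xi)}.
\]
Applying Definition~\ref{def:prerepresentation}(2) with $(f_1,\xi_1,f_2,\xi_2) = (f,\xi,h,\zeta)$ and taking the $D$-valued adjoint yields $\braket{L(f^**h)\zeta}{\iota(\xi)} = \braket{L(h)\zeta}{L(f)\xi}$. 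Equation~(1) then follows by density.
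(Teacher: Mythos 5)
Your proposal is correct and follows essentially the same route as the paper's own proof: both decompose $f=\conj{f_1}\cdot f_2'$, use $L'(f)=T_{f_1}^*UT_{f_2'}$ from~\eqref{eq:integrated_form_explicit}, identify $UT_{f_2'}(L(f_2)\xi)$ with $\tau_\rg(F\otimes\xi)$ for $F(a,k)=f_2'(a)f_2(a^{-1}k)$, evaluate $T_{f_1}^*$ on this via density of elementary tensors in $\Contc(G^1\times_{\rg,\rg}G^1)$, and then deduce the first identity from the second by testing against the dense span of $L(h)\zeta$ and invoking Definition~\ref{def:prerepresentation}(2). Your treatment of the limit interchange (norm convergence of $\tau_\rg(F_n\otimes\xi)$ versus weak convergence of $L(G_n)\xi$) merely spells out a step the paper compresses into one sentence.
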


\begin{proof}
  Let \(f_1,f_2,f_3\in\Contc(G^1)\) and \(\xi_1,\xi_2\in\Hilm[F]_0\).
  We compute the inner product
  \(\braket{f_3\otimes\iota(\xi_1)}{f_1\otimes L(f_2)\xi_2}\) in
  \(\Lt^2(G^1,\rg,\alpha)\otimes_\varphi \Hilm[F]\):
  \[
  \braket{f_3\otimes\iota(\xi_1)}{f_1\otimes L(f_2)\xi_2}
  = \braket{\iota(\xi_1)}{\varphi(\braket{f_3}{f_1}) L(f_2)\xi_2}
  = \braket{\iota(\xi_1)}{L(\rg^*(\braket{f_3}{f_1})\cdot f_2)\xi_2},
  \]
  where \(\rg^*(\braket{f_3}{f_1})\cdot f_2\in\Contc(G^1)\) is given by
  \[
  \rg^*(\braket{f_3}{f_1})\cdot f_2(g)
  = \int_{G^1} \conj{f_3(x)} f_1(x) f_2(g) \dd\alpha^{\rg(g)}(x).
  \]
  Thus \(T_{f_3}^*(f_1\otimes L(f_2)\xi_2) =
  L(\rg^*(\braket{f_3}{f_1})\cdot f_2)\xi_2\); the annihilation
  operator~\(T_{f_3}^*\) is the adjoint of the creation
  operator~\(T_{f_3}\) as in Definition~\ref{def:creation}.
  Define \(F\in\Contc(G^1\times_{\rg,G^0,\rg}
  G^1)\) by \(F(x,g) \defeq f_1(x) f_2(x^{-1} g)\).
  The unitary~\(U\) maps \(f_1 \otimes L(f_2)\xi_2\)
  to \(\tau(F\otimes\xi_2)\).
  Since the image of \(\Contc(G^1)\odot \Contc(G^1)\) in \(\Contc(G^2)\)
  is dense in the inductive limit topology, the computation above
  implies that \(T_{f_3}^* U(f_1 \otimes L(f_2)\xi_2) =
  L(\psi)\xi_2\) with
  \[
  \psi = \int \conj{f_3(x)} F(x,g) \dd\alpha^{\rg(g)}(x)
  = \int \conj{f_3(x)} f_1(x) f_2(x^{-1} g) \dd\alpha^{\rg(g)}(x)
  = (\conj{f_3} f_1) * f_2(g).
  \]
  Summing up,
  \[
  T_{f_3}^* U T_{f_1} (L(f_2)\xi)
  = T_{f_3}^* U(f_1 \otimes L(f_2)\xi)
  = L\bigl(\bigl(\conj{f_3}f_1\bigr) * f_2\bigr)\xi
  \]
  for all \(f_1,f_2,f_3\in\Contc(G^1)\), \(\xi\in\Hilm[F]_0\).

  The integrated form~\(L'\) of~\((\varphi,U)\) is
  \(L'\bigl(\conj{f_3}f_1\bigr) \defeq T_{f_3}^* U T_{f_1}\)
  by~\eqref{eq:integrated_form_explicit}.  So the computation above
  shows that \(L'(f)(L(f_2)\xi) = L(f*f_2)\xi\) for all
  \(f,f_2\in\Contc(G^1)\), \(\xi\in\Hilm[F]_0\).  Since~\(L'\) is a
  \Star{}representation of the \Star{}algebra~\(\Contc(G^1)\)
  on~\(\Hilm[F]\),
  \[
  \braket{L'(f)\iota(\xi_1)}{L(f_2)\xi_2}
  = \braket{\iota(\xi_1)}{L'(f^*)L(f_2)\xi_2}
  = \braket{\iota(\xi_1)}{L(f^* * f_2)\xi_2}
  = \braket{L(f)\xi_1}{L(f_2)\xi_2}
  \]
  for all \(f,f_2\in\Contc(G^1)\), \(\xi_1,\xi_2\in\Hilm[F]_0\).
  Since vectors of the form \(L(f_2)\xi_2\) span a dense subspace
  in~\(\Hilm[F]\), this implies \(L'(f)\iota(\xi_1) = L(f)\xi_1\).
\end{proof}

\begin{corollary}
  \label{cor:prerep}
  For any pre-representation~\((L,\Hilm[F]_0,\iota)\)
  of~\(\Contc(G^1)\) there is a representation \(L'\colon
  \Contc(G^1)\to\Bound(\Hilm[F])\) that is bounded with respect to the
  \(I\)\nb-norm and satisfies \(L'(f)(\iota(\xi)) = L(f)(\xi)\) for
  all \(f\in\Contc(G^1)\), \(\xi\in\Hilm[F]_0\).  In particular, a
  representation of \(\Contc(G^1)\) is \(I\)\nb-norm bounded if and
  only if it is continuous in the inductive limit topology.
\end{corollary}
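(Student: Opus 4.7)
The plan is to observe that this corollary is essentially a direct consequence of the integration/disintegration machinery already developed, once we verify the hypotheses fit. Given the pre-representation $(L,\Hilm[F]_0,\iota)$, I would first apply the disintegration construction of Section~\ref{sec:disintegrate} (Proposition~\ref{pro:disintegrate}) to produce a representation $(\varphi,U)$ of $(G,\alpha)$ on~$\Hilm[F]$. Then I would integrate $(\varphi,U)$ back to a \Star{}representation $L'\colon \Contc(G^1)\to\Bound(\Hilm[F])$ via the construction of Section~\ref{sec:integrate}. By Lemma~\ref{lem:Lf_well-defined}, this integrated form automatically satisfies
\[
\norm{L'(f)}\le \norm{\alpha(\abs{f})}_\infty^{1/2}\norm{\tilde\alpha(\abs{f})}_\infty^{1/2} \le \norm{f}_I,
\]
so $L'$ is bounded in the $I$-norm. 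The compatibility identity $L'(f)(\iota(\xi))=L(f)(\xi)$ is exactly the content of Proposition~\ref{pro:back_and_forth_from_L}.

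For the ``in particular'' clause, I would argue both implications. If $L\colon \Contc(G^1)\to\Bound(\Hilm[F])$ is a representation in the usual (everywhere-defined) sense that is continuous in the inductive limit topology, then it may be regarded as a pre-representation with $\Hilm[F]_0=\Hilm[F]$ and $\iota=\Id$: conditions~(2) and~(3) of Definition~\ref{def:prerepresentation} are immediate from the \Star{}representation hypothesis together with a standard approximate unit argument for nondegeneracy (which is already implicit in calling~$L$ a representation; if nondegeneracy is not assumed, one cuts down to the essential subspace). Condition~(1) is precisely inductive-limit continuity. The construction above then yields~$L'$ with $L'(f)\xi=L(f)\xi$ for all $\xi\in\Hilm[F]$, so $L=L'$ is $I$-norm bounded. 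Conversely, any $I$-norm bounded \Star{}representation is automatically inductive limit continuous because the inductive limit topology on~$\Contc(G^1)$ dominates the $I$-norm topology (the same remark used after Lemma~\ref{lem:Lf_well-defined}): functions supported in a fixed compact set~$K$ satisfy $\norm{f}_I\le C_K\norm{f}_\infty$ for a constant~$C_K$ depending only on $\sup_{x}\alpha^x(K)$ and $\sup_x\tilde\alpha_x(K)$, both of which are finite by local compactness and continuity of the Haar system.

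No genuinely hard step remains here: the work has been done in Sections \ref{sec:integrate}--\ref{sec:disintegrate}. The only point requiring a touch of care is making sure the intermediate disintegrated pair $(\varphi,U)$ is legitimately a representation of $(G,\alpha)$ (so that Lemma~\ref{lem:Lf_well-defined} applies to its integrated form), which is guaranteed by Proposition~\ref{pro:disintegrate}.
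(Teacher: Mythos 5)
Your proposal is correct and follows exactly the route the paper intends: the corollary is stated immediately after Proposition~\ref{pro:back_and_forth_from_L} precisely because it is the combination of disintegration (Proposition~\ref{pro:disintegrate}), integration with the $I$\nobreakdash-norm bound of Lemma~\ref{lem:Lf_well-defined}, and the compatibility $L'(f)\iota(\xi)=L(f)\xi$ from that proposition. Your handling of the ``in particular'' clause (viewing an inductive-limit continuous representation as a pre-representation with $\iota=\Id$, and noting the inductive limit topology dominates the $I$\nobreakdash-norm topology for the converse) is likewise the argument the paper relies on.
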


The first statement in Corollary~\ref{cor:prerep} looks
rather technical.  Nevertheless, it is a key result for the theory
of groupoid \(\Cst\)\nb-algebras.  For instance, it is needed to
prove that Morita equivalent groupoids have Morita--Rieffel
equivalent \(\Cst\)\nb-algebras, see
\cite{Muhly-Renault-Williams:Equivalence}*{p.~15}.  And this is
the only point in the proof of
\cite{Muhly-Renault-Williams:Equivalence}*{Theorem 2.8} where it
is needed that the groupoids involved are second countable.  So
our Corollary~\ref{cor:prerep} removes this hypothesis from the
main result of~\cite{Muhly-Renault-Williams:Equivalence}:

\begin{corollary}
  Let \((G,\alpha)\) and \((H,\beta)\) be locally compact
  groupoids with Haar systems.  Let~\(Z\) be a Morita equivalence
  between \(G\) and~\(H\).  Then \(\Contc(Z)\) may be completed to
  a \(\Cst(G,\alpha),\Cst(H,\beta)\)-imprimitivity bimodule.  So
  \(\Cst(G,\alpha)\) and \(\Cst(H,\beta)\) are Morita--Rieffel
  equivalent.
\end{corollary}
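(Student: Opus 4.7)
The plan is to follow the proof of \cite{Muhly-Renault-Williams:Equivalence}*{Theorem~2.8} essentially verbatim, observing that the only place where that argument requires second countability is the automatic \(I\)\nb-norm boundedness of certain \Star{}representations of \(\Contc(G^1)\) and \(\Contc(H^1)\)---and this is precisely what Corollary~\ref{cor:prerep} now supplies without any countability hypothesis.

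First I would recall the MRW pre-imprimitivity bimodule structure on \(\Contc(Z)\): a left \(\Contc(G^1)\)-module structure with \(\Contc(G^1)\)-valued inner product \(\lbraket{\cdot}{\cdot}\), and a right \(\Contc(H^1)\)-module structure with \(\Contc(H^1)\)-valued inner product \(\braket{\cdot}{\cdot}\), all given by convolution formulas using the Haar systems \(\alpha\) and~\(\beta\) together with the principality of the \(G,H\)-bibundle~\(Z\). The algebraic identities (the two module actions, adjointness of the inner products, and the imprimitivity compatibility \(\lbraket{\xi}{\eta}\cdot\zeta = \xi\cdot\braket{\eta}{\zeta}\)) are purely formal and were established in~\cite{Muhly-Renault-Williams:Equivalence} without any countability assumption.

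Next I would complete \(\Contc(Z)\) to a Hilbert \(\Cst(H,\beta)\)-module \(\Hilm[F]\), using positivity of the \(\Contc(H^1)\)-valued inner product after mapping into \(\Cst(H,\beta)\) exactly as in MRW. Let \(\Hilm[F]_0 \defeq \Contc(Z)\) and let \(\iota\colon \Hilm[F]_0\to\Hilm[F]\) be the canonical map. The left MRW action then yields a linear map \(L\colon \Contc(G^1)\to \Hom(\Hilm[F]_0,\Hilm[F])\) satisfying the three conditions of Definition~\ref{def:prerepresentation}: inductive-limit continuity and the \Star{}algebraic identity \(\braket{L(f_1)\xi}{L(f_2)\eta} = \braket{\iota(\xi)}{L(f_1^**f_2)\eta}\) follow directly from the convolution formulas and the bimodule associativity, while density of \(L(\Contc(G^1))\Hilm[F]_0\) in~\(\Hilm[F]\) follows from \(\Contc(G^1)*\Contc(Z) = \Contc(Z)\). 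Thus \((L,\Hilm[F]_0,\iota)\) is a pre-representation.

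Finally, Corollary~\ref{cor:prerep} produces an \(I\)\nb-norm-bounded \Star{}representation \(L'\colon \Contc(G^1)\to \Bound(\Hilm[F])\) extending \(L\) through~\(\iota\), which in turn extends uniquely to a representation of \(\Cst(G,\alpha)\) by adjointable operators on~\(\Hilm[F]\). Symmetrically, the right \(\Contc(H^1)\)-action extends to a representation of \(\Cst(H,\beta)\). Fullness on both sides and the surviving imprimitivity identities pass from \(\Contc(Z)\) to the completion by continuity, so \(\Hilm[F]\) is the desired \(\Cst(G,\alpha),\Cst(H,\beta)\)-imprimitivity bimodule. The main delicate point---and the only place where one has to be careful not to smuggle in a countability hypothesis---is the verification of axiom~(2) of Definition~\ref{def:prerepresentation}: one must match the original \(\Contc(H^1)\)-valued pairing to the pairing inherited by \(\Hilm[F]\) via \(\iota\), but this compatibility is built into the very definition of the \(\Cst(H,\beta)\)-valued inner product on \(\Hilm[F]\), so no analytic subtleties arise.
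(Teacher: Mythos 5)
Your proposal is correct and follows exactly the route the paper intends: the paper gives no separate proof of this corollary, but the preceding paragraph explains that one runs the argument of \cite{Muhly-Renault-Williams:Equivalence}*{Theorem 2.8} verbatim and uses Corollary~\ref{cor:prerep} to replace the one step (the automatic \(I\)\nb-norm boundedness of the \(\Contc(G^1)\)-action on the pre-imprimitivity bimodule, \cite{Muhly-Renault-Williams:Equivalence}*{p.~15}) where second countability was needed. Your elaboration of the pre-representation \((L,\Contc(Z),\iota)\) and the verification of Definition~\ref{def:prerepresentation} is a faithful, more detailed rendering of that same argument.
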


Now we finish the proof of our main theorem.

\begin{proof}[Proof of
  Theorem~\textup{\ref{the:universal_groupoid_Haus}}]
  A representation of~\(\Cst(G,\alpha)\) on~\(\Hilm[F]\) is
  equivalent to a nondegenerate \Star{}representation of the
  \Star{}algebra~\(\Contc(G^1)\) that is bounded with respect to the
  \(I\)\nb-norm; by Corollary~\ref{cor:prerep}, this is equivalent
  to continuity in the inductive limit topology.  When we
  disintegrate such a representation~\(L\) to a
  representation~\((\varphi,U)\) of~\((G,\alpha)\) and
  integrate~\((\varphi,U)\) to a representation of~\(\Contc(G^1)\),
  we get back the original representation~\(L\) by
  Proposition~\ref{pro:back_and_forth_from_L}.

  Now we start with a representation~\((\varphi,U)\)
  of~\((G,\alpha)\) and integrate it to a representation~\(L\)
  of~\(\Contc(G^1)\).  Let \((\varphi',U')\) be the representation
  of~\((G,\alpha)\) obtained by disintegrating~\(L\).  We claim that
  \(\varphi'=\varphi\) and \(U'=U\).  Let \(f_1,f_2\in\Contc(G^1)\),
  \(f_0\in\Contc(G^0)\) and \(\xi\in\Hilm[F]\).  Then \(f_1
  \rg^*(f_0) \otimes \xi = f_1 \otimes \varphi(f_0)\xi\) in
  \(\Lt^2(G^1,\rg,\alpha)\otimes_\varphi \Hilm[F]\).  Hence
  \(T_{\rg^*(f_0)\cdot f_1}^* = \varphi(f_0^*) T_{f_1}^*\) in
  \(\Bound(\Lt^2(G^1,\rg,\alpha)\otimes_\varphi
  \Hilm[F],\Hilm[F])\).  So
  \[
  \varphi(f_0) L(\conj{f_1}f_2)\xi
  = \varphi(f_0) T_{f_1}^* U T_{f_2}\xi
  = T_{\rg(f_0^*)f_1}^* U T_{f_2}\xi
  = L(\rg^*(f_0)\conj{f_1}f_2)\xi.
  \]
  Thus \(\varphi'=\varphi\) by the definition of~\(\varphi'\) in
  Lemma~\ref{lem:disintegrate_varphi}.

  Equation~\eqref{eq:integrated_form_explicit} implies
  \[
  \braket{f_1\otimes \xi_1}{U(f_2\otimes\xi_2)}
  = \braket{\xi_1}{T_{f_1}^* U T_{f_2}\xi_2}
  = \braket{\xi_1}{L(\conj{f_1} f_2)\xi_2}
  \]
  for all \(f_1,f_2\in\Contc(G^1)\) and \(\xi_1,\xi_2\in\Hilm[F]\).
  This shows that the representation~\(L\) uniquely determines~\(U\)
  because the inner products \(\braket{f_1\otimes
    \xi_1}{U(f_2\otimes\xi_2)}\) determine~\(U\).  Both
  \((\varphi,U)\) and~\((\varphi',U')\) integrate to the same
  representation~\(L\) by
  Proposition~\ref{pro:back_and_forth_from_L}.  This implies
  \(U=U'\) because~\(L\) determines~\(U\) uniquely.

  Let \(\Hilm[F]_1\) and~\(\Hilm[F]_2\) be Hilbert \(D\)\nb-modules
  with representations \((\varphi_1,U_1)\) and \((\varphi_2,U_2)\)
  of~\((G,\alpha)\) and let \(L_1\) and~\(L_2\) be the corresponding
  representations of \(\Contc(G^1)\).  Let \(J\colon
  \Hilm[F]_1\into\Hilm[F]_2\) be an isometry.  This intertwines
  \((\varphi_1,U_1)\) and \((\varphi_2,U_2)\) if and only if
  \(J\varphi_1(f) = \varphi_2(f)J\) for all \(f\in\Cont_0(G^0)\) and
  \((\Id_{\Lt^2(G^1,\rg,\alpha)} \otimes J) U_1 = U_2
  (\Id_{\Lt^2(G^1,\s,\tilde\alpha)} \otimes J)\).  Then
  \[
  L_2(f) J
  = T_h^* U_2 T_f J
  = T_h^* U_2 (\Id\otimes J) T_f
  = T_h^* (\Id\otimes J) U_1 T_f
  = J T_h^* U_1 T_f
  = J L_1(f),
  \]
  that is, \(J\) intertwines the representations \(L_1\) and~\(L_2\)
  of~\(\Contc(G^1)\).  Then it also intertwines the unique
  extensions of \(L_1\) and~\(L_2\) to \(\Cst(G,\alpha)\).

  Conversely, assume that~\(J\) intertwines two representations
  \(L_1\) and~\(L_2\) of \(\Contc(G^1)\) on \(\Hilm[F]_1\)
  and~\(\Hilm[F]_2\).  Then~\(J\) also intertwines the
  representations~\(\varphi_i\) of \(\Cont_0(G^0)\) defined in
  Lemma~\ref{lem:disintegrate_varphi} and the maps \(\Contc(G^2)
  \odot \Hilm[F]_i \to \Lt^2(G^1,\rg,\alpha) \otimes_{\varphi_i}
  \Hilm[F]_i\) and \(\Contc(G^2) \odot \Hilm[F]_i \to
  \Lt^2(G^1,\s,\tilde\alpha) \otimes_{\varphi_i} \Hilm[F]_i\) used
  to construct~\(U\); that is, the constructions in Lemmas
  \ref{lem:disintegrate_varphi} and Lemma~\ref{lem:dense_in_HilmF}
  are natural for isometric intertwiners in the formal sense.
  Thus~\(J\) intertwines the
  representations \((\varphi_1,U_1)\) and \((\varphi_2,U_2)\)
  obtained by disintegrating \(L_1\) and~\(L_2\).

  The previous two paragraphs show that the bijections between
  representations of \((G,\alpha)\) and~\(\Cst(G,\alpha)\) have the
  property~\ref{en:universal_groupoid_Haus1} in
  Theorem~\ref{the:universal_groupoid_Haus}.
  Property~\ref{en:universal_groupoid_Haus2} in
  Theorem~\ref{the:universal_groupoid_Haus} is obvious from our
  construction of the integrated form of a representation
  of~\((G,\alpha)\).
  The proof that our universal property
  characterises~\(\Cst(G,\alpha)\) uniquely up to a canonical
  isomorphism is the same as a corresponding argument for
  \(\Cst\)\nb-hulls, see \cite{Meyer:Unbounded}*{Proposition~3.7}.
\end{proof}

\begin{corollary}
  \label{cor:universal_rep}
  There is a \emph{universal
    representation}~\((\varphi^\univ,U^\univ)\) of~\((G,\alpha)\) on
  \(\Cst(G,\alpha)\) such that disintegration of representations
  maps a representation~\(L\) of \(\Cst(G,\alpha)\) on~\(\Hilm[F]\)
  to \((\varphi^\univ,U^\univ) \otimes_L \Hilm[F]\).  More
  precisely, this representation of~\((G,\alpha)\) lives
  on~\(\Cst(G,\alpha) \otimes_L \Hilm[F]\), which we identify
  with~\(\Hilm[F]\) by the canonical unitary \(f\otimes \xi\mapsto
  L(f)\xi\).
\end{corollary}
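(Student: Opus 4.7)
The plan is to extract $(\varphi^\univ,U^\univ)$ by applying Theorem~\ref{the:universal_groupoid_Haus} to the single most canonical representation available, namely the identity representation of $\Cst(G,\alpha)$ acting on itself viewed as a Hilbert $\Cst(G,\alpha)$\nb-module. Concretely, I would take $D=\Cst(G,\alpha)$, $\Hilm[F]=\Cst(G,\alpha)$, and let $L$ be the identity representation; disintegrating this representation via Theorem~\ref{the:universal_groupoid_Haus} produces a representation of $(G,\alpha)$ on $\Cst(G,\alpha)$, which I would \emph{define} to be $(\varphi^\univ,U^\univ)$.

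Next I would invoke naturality property~\ref{en:universal_groupoid_Haus2} of Theorem~\ref{the:universal_groupoid_Haus}. Given an arbitrary representation $L\colon \Cst(G,\alpha)\to \Bound(\Hilm[F])$, view $\Hilm[F]$ as a \cstar{}correspondence $\Cst(G,\alpha)\to D$. Property~\ref{en:universal_groupoid_Haus2} then asserts that the bijection between representations of $\Cst(G,\alpha)$ and of $(G,\alpha)$ is compatible with tensoring by $\Hilm[F]$. Applied to the universal representation, this says that the representation of $(G,\alpha)$ on $\Cst(G,\alpha)\otimes_L \Hilm[F]$ corresponding to $(\varphi^\univ,U^\univ)\otimes_L\Hilm[F]$ integrates to the $\Cst(G,\alpha)$\nb-representation obtained by tensoring the identity representation of $\Cst(G,\alpha)$ on itself along~$\Hilm[F]$.

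The third step is the identification $\Cst(G,\alpha)\otimes_L\Hilm[F]\cong\Hilm[F]$ via the canonical unitary $f\otimes\xi\mapsto L(f)\xi$, which is well defined and surjective because~$L$ is nondegenerate. Under this unitary, the induced representation of $\Cst(G,\alpha)$ on $\Cst(G,\alpha)\otimes_L\Hilm[F]$ is transported to~$L$ itself. Combining these two steps: $(\varphi^\univ,U^\univ)\otimes_L\Hilm[F]$ integrates to $L$, so by the bijection in Theorem~\ref{the:universal_groupoid_Haus} it coincides with the disintegration of~$L$.

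The only point requiring real care is making property~\ref{en:universal_groupoid_Haus2} precise enough to apply here, that is, verifying that tensoring a representation~$(\varphi,U)$ of $(G,\alpha)$ on a module $\Hilm[F]_1$ with a correspondence $\Hilm[E]\colon D_1\to D_2$ yields a well-defined representation on $\Hilm[F]_1\otimes_{D_1}\Hilm[E]$, and that this operation intertwines integration/disintegration with Rieffel induction on the $\Cst$\nb-algebra side; this is essentially the formal content of the correspondence bicategory and follows from the associativity of interior tensor products. I expect no genuine obstacle here, since the construction of $(\varphi,U)\otimes_D\Hilm[E]$ amounts to tensoring~$\varphi$ with $\Hilm[E]$ to get the new $\Cont_0(G^0)$\nb-representation and then using the identification $\Lt^2(G^1,\s,\tilde\alpha)\otimes_\varphi(\Hilm[F]\otimes_D\Hilm[E])\cong(\Lt^2(G^1,\s,\tilde\alpha)\otimes_\varphi\Hilm[F])\otimes_D\Hilm[E]$ to tensor~$U$ with $\Id_{\Hilm[E]}$. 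The uniqueness-up-to-canonical-isomorphism part of Theorem~\ref{the:universal_groupoid_Haus} then promotes the informal statement to the precise corollary.
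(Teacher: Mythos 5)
Your proposal is correct and follows essentially the same route as the paper: define $(\varphi^\univ,U^\univ)$ by disintegrating the identity representation of $\Cst(G,\alpha)$ on itself, then combine naturality property~\ref{en:universal_groupoid_Haus2} with transport along the canonical unitary $\Cst(G,\alpha)\otimes_L\Hilm[F]\congto\Hilm[F]$ (which is where property~\ref{en:universal_groupoid_Haus1} enters, since that unitary intertwines the $\Cst(G,\alpha)$\nb-representations and hence the corresponding representations of $(G,\alpha)$). The paper additionally records an explicit description of $\varphi^\univ$ and $U^\univ$, but that is supplementary to the argument you give.
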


The proof also describes the universal
representation~\((\varphi^\univ,U^\univ)\).

\begin{proof}
  We view the identity map on~\(\Cst(G,\alpha)\) as a representation
  of~\(\Cst(G,\alpha)\) on~\(\Cst(G,\alpha)\) as a Hilbert
  module over itself.  This representation disintegrates to a
  representation~\((\varphi^\univ,U^\univ)\) of~\((G,\alpha)\)
  on~\(\Cst(G,\alpha)\).  By construction, \(\varphi^\univ\colon
  \Cont_0(G^0)\to \Bound(\Cst(G,\alpha)) = \Mult(\Cst(G,\alpha))\)
  is the canonical morphism: a function~\(f_0\) in \(\Cont_0(G^0)\)
  multiplies on the left by~\(\rg^*(f_0)\) and hence on the right
  by~\(\s^*(f_0)\).  The unitary
  \[
  U^\univ\colon
  \Lt^2(G,\s,\tilde\alpha) \otimes_{\varphi^\univ} \Cst(G,\alpha)
  \congto
  \Lt^2(G,\rg,\alpha) \otimes_{\varphi^\univ} \Cst(G,\alpha)
  \]
  is the unique extension of the isomorphism
  \begin{equation}
    \label{eq:U_univ_dense}
    \Contc(G^1\times_{\s,G^0,\rg} G^1) \congto
    \Contc(G^1\times_{\rg,G^0,\rg} G^1)
  \end{equation}
  that composes functions with the canonical homeomorphism
  \[
  G^1\times_{\rg,G^0,\rg} G^1 \congto G^1\times_{\s,G^0,\rg} G^1,
  \qquad
  (g,k)\mapsto (g,g^{-1} k).
  \]
  A variant of Lemma~\ref{lem:dense_in_HilmF} gives linear maps
  \begin{align*}
    \Contc(G^1\times_{\s,G^0,\rg} G^1) &\to
    \Lt^2(G,\s,\tilde\alpha) \otimes_{\varphi^\univ} \Cst(G,\alpha),\\
    \Contc(G^1\times_{\rg,G^0,\rg} G^1) &\to
    \Lt^2(G,\rg,\alpha) \otimes_{\varphi^\univ} \Cst(G,\alpha)
  \end{align*}
  with dense image and shows that the
  isomorphism~\eqref{eq:U_univ_dense} preserves the inner products.
  More precisely, Lemma~\ref{lem:dense_in_HilmF} considers
  \(\Contc(G^2)\odot \Cst(G,\alpha)\).  But we may do the same
  computation without the factor~\(\Cst(G,\alpha)\), getting the
  above, simpler, description of~\(U^\univ\).

  The claim that disintegration is just tensoring a given
  representation of~\(\Cst(G,\alpha)\)
  with the universal representation is implicit in our construction in
  Section~\ref{sec:disintegrate}.  We deduce it from the two extra
  properties of the bijections in
  Theorem~\ref{the:universal_groupoid_Haus}, compare the proof of
  \cite{Meyer:Unbounded}*{Proposition~3.6}.  The canonical unitary
  \(\Cst(G,\alpha) \otimes_L \Hilm[F]\congto \Hilm[F]\),
  \(f\otimes \xi\mapsto L(f)\xi\),
  intertwines the obvious representations of~\(\Cst(G,\alpha)\)
  on both Hilbert modules.  It intertwines the corresponding
  representations of~\((G,\alpha)\)
  by
  \ref{the:universal_groupoid_Haus}.\ref{en:universal_groupoid_Haus1}.
  The representation of~\((G,\alpha)\)
  on~\(\Cst(G,\alpha) \otimes_L \Hilm[F]\)
  is \((\varphi^\univ,U^\univ) \otimes_L \Hilm[F]\)
  by
  \ref{the:universal_groupoid_Haus}.\ref{en:universal_groupoid_Haus2}.
  Hence the disintegration of~\(L\)
  is obtained by transporting the representation
  \((\varphi^\univ,U^\univ) \otimes_L \Hilm[F]\)
  on \(\Cst(G,\alpha) \otimes_L \Hilm[F]\)
  to one on~\(\Hilm[F]\)
  along the canonical unitary
  \(\Cst(G,\alpha) \otimes_L \Hilm[F]\congto \Hilm[F]\).
\end{proof}

\section{Transformation groups and \'etale groupoids}
\label{sec:trafo_group_etale}

We now make our universal property more explicit in two cases,
namely, for transformation groups and for (Hausdorff) \'etale
groupoids.  In both cases, there is a canonical Haar
system~\(\alpha\).  We are going to reprove known
characterisations of \(\Cst(G,\alpha)\) as a crossed product for a
group or inverse semigroup action, respectively.

\subsection{Transformation groups}
Let~\(\Gamma\)
be a locally compact group and let \(X\)
be a left \(\Gamma\)\nb-space.
Let \(G=\Gamma\ltimes X\)
be the transformation groupoid.  Fix a Haar measure~\(\alpha_0\)
on~\(\Gamma\)
and let~\(\alpha\)
be the resulting ``constant'' Haar system on~\(G\).  By definition,
\begin{alignat*}{3}
  G^0 &= X,&\qquad
  \s(\gamma,x) &= x,&\qquad
  d_0(\gamma_1,\gamma_2,x) &= (\gamma_2,x),\\
  G^1 &= \Gamma\times X,&\qquad
  \rg(\gamma,x) &= \gamma\cdot x,&\qquad
  d_1(\gamma_1,\gamma_2,x) &= (\gamma_1 \gamma_2, x),\\
  G^2 &= \Gamma^2\times X,&\qquad
  && d_2(\gamma_1,\gamma_2,x) &= (\gamma_1, \gamma_2 x).
\end{alignat*}
The measure family~\(\tilde\alpha\)
is the constant family
\(\tilde\alpha_x = \tilde\alpha_0\times\delta_x\)
for \(x\in X\),
where~\(\tilde\alpha_0\)
is the right Haar measure on~\(\Gamma\)
associated to~\(\alpha_0\).

\begin{theorem}
  \label{the:rep_trafo_group}
  Let~\((G,\alpha)\) be a transformation groupoid with its
  canonical Haar system.  Let~\(D\) be a \(\Cst\)\nb-algebra
  and~\(\Hilm[F]\) a Hilbert \(D\)\nb-module.  There is a
  bijection between representations of \((G,\alpha)\)
  on~\(\Hilm[F]\) and covariant representations for the action
  of~\(\Gamma\) on~\(\Cont_0(X)\).
  It leaves the representation of~\(\Cont_0(X)\) unchanged and has
  the two naturality properties in
  Theorem~\textup{\ref{the:universal_groupoid_Haus}}.
  Thus \(\Cst(G,\alpha)\) is naturally isomorphic to
  the crossed product \(\Gamma \ltimes \Cont_0(X)\).
\end{theorem}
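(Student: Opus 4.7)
The overall strategy is to combine Theorem~\ref{the:universal_groupoid_Haus} with the universal property of the crossed product $\Gamma\ltimes\Cont_0(X)$: representations of $\Gamma\ltimes\Cont_0(X)$ on a Hilbert $D$-module $\Hilm[F]$ correspond to covariant pairs $(\varphi,V)$ consisting of a nondegenerate $\varphi\colon\Cont_0(X)\to\Bound(\Hilm[F])$ and a strictly continuous unitary homomorphism $V\colon\Gamma\to\U(\Bound(\Hilm[F]))$ satisfying $V_\gamma\varphi(f)V_\gamma^{-1}=\varphi(f\circ\gamma^{-1})$. It thus suffices to construct a natural bijection between representations $(\varphi,U)$ of $(G,\alpha)$ on $\Hilm[F]$ and such covariant pairs, leaving $\varphi$ unchanged.

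Because the Haar system is constant, the correspondence $\Lt^2(G^1,\s,\tilde\alpha)\otimes_\varphi\Hilm[F]$ is canonically isomorphic to the external tensor product $\Lt^2(\Gamma,\tilde\alpha_0)\otimes\Hilm[F]$, with $\Cont_0(\Gamma\times X)$ acting by $(f\cdot\zeta)(\gamma)=\varphi(f(\gamma,\blank))\zeta(\gamma)$. The module $\Lt^2(G^1,\rg,\alpha)\otimes_\varphi\Hilm[F]$ is likewise isomorphic to $\Lt^2(\Gamma,\tilde\alpha_0)\otimes\Hilm[F]$, by pulling back along the inversion homeomorphism of $G^1$ (equivalently, by combining $(\gamma,x)\mapsto(\gamma,\gamma x)$ with the substitution $\gamma\mapsto\gamma^{-1}$), but with a twisted action $(f\cdot\zeta)(\gamma)=\varphi(f(\gamma,\gamma^{-1}\blank))\zeta(\gamma)$. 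Under these identifications, the unitary $U$ of Definition~\ref{def:representation1} becomes a $\Cont_0(\Gamma)$-linear unitary $\tilde U$ of $\Lt^2(\Gamma,\tilde\alpha_0)\otimes\Hilm[F]$, and such a unitary is realised by a pointwise family $\gamma\mapsto V_\gamma\in\U(\Bound(\Hilm[F]))$.

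The intertwining of the two differing $\Cont_0(X)$-actions forces the covariance identity $V_\gamma\varphi(f)V_\gamma^{-1}=\varphi(f\circ\gamma^{-1})$, while the cocycle identity $d_1^*(U)=d_2^*(U)\circ d_0^*(U)$ on $G^2=\Gamma^2\times X$, together with the groupoid multiplication $(\gamma_1,\gamma_2 x)\cdot(\gamma_2,x)=(\gamma_1\gamma_2,x)$, translates into the homomorphism property $V_{\gamma_1\gamma_2}=V_{\gamma_1}V_{\gamma_2}$. Conversely, any covariant pair yields a unitary $U$ of the required form, and naturality with respect to isometric intertwiners and with respect to interior tensoring by correspondences is immediate from the fact that both constructions commute with these operations, so the bijection satisfies the two naturality properties of Theorem~\ref{the:universal_groupoid_Haus}.

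The main technical obstacle is continuity: the family $(V_\gamma)$ produced from $U$ is a priori only loosely described, whereas a covariant representation requires strict continuity of $V$. The cleanest workaround is to pass through integrated forms on both sides. Section~\ref{sec:integrate} integrates $(\varphi,U)$ to a nondegenerate \Star{}representation of $\Contc(G^1)=\Contc(\Gamma,\Cont_0(X))$ that is bounded in the $I$-norm and inductive-limit continuous (Corollary~\ref{cor:prerep}); this is precisely a representation of the convolution \Star{}algebra underlying the crossed product. The classical universal property of $\Gamma\ltimes\Cont_0(X)$ then supplies a strictly continuous covariant pair $(\varphi,V)$, and the uniqueness clause of Theorem~\ref{the:universal_groupoid_Haus} matches the representations of $\Cst(G,\alpha)$ and $\Gamma\ltimes\Cont_0(X)$ bijectively, yielding the natural isomorphism $\Cst(G,\alpha)\cong\Gamma\ltimes\Cont_0(X)$.
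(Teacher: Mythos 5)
Your opening reduction -- identifying both $\Lt^2(G^1,\s,\tilde\alpha)\otimes_\varphi\Hilm[F]$ and $\Lt^2(G^1,\rg,\alpha)\otimes_\varphi\Hilm[F]$ with exterior tensor products $\Lt^2(\Gamma,\cdot)\otimes\Hilm[F]$ via the constant Haar system and the homeomorphism $(\gamma,x)\mapsto(\gamma,\gamma x)$ -- is exactly the paper's. But the pivotal sentence ``such a unitary is realised by a pointwise family $\gamma\mapsto V_\gamma\in\U(\Bound(\Hilm[F]))$'' is a step that would fail: a $\Cont_0(\Gamma)$-linear unitary of $\Lt^2(\Gamma)\otimes\Hilm[F]$ does \emph{not} decompose into a pointwise (or even measurable) family of unitaries of~$\Hilm[F]$ when $\Hilm[F]$ is a Hilbert module, and not even for non-separable Hilbert spaces (see Remark~\ref{rem:no_disintegration_module} and the remark following Corollary~\ref{cor:automatic-continuity}); avoiding precisely this decomposition is the raison d'\^etre of the whole formalism. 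The paper's resolution is different from your workaround: after the identifications, the pair $(\Id_{\C},U')$ is literally a representation of the \emph{group} $(\Gamma,\alpha_0)$ in the sense of Definition~\ref{def:representation}, because the cocycle condition on $G^2=\Gamma^2\times X$ passes to the corresponding condition over $\Gamma^2$. Applying Theorem~\ref{the:universal_groupoid_Haus} to $\Gamma$ and then the classical universal property of $\Cst(\Gamma)$ (continuous unitary representations) produces the strictly continuous $V$, and the $\Cont_0(X)$-intertwining property of $U'$ then \emph{is} the covariance condition. No disintegration into fibres ever occurs.

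Your proposed workaround via integrated forms is a legitimate alternative route to the isomorphism $\Cst(G,\alpha)\cong\Gamma\ltimes\Cont_0(X)$: Corollary~\ref{cor:prerep} gives an $I$-norm-bounded representation of $(\Contc(G^1),*)$, and this \Star{}algebra coincides with the convolution algebra underlying the crossed product. To make it complete you would still need to (i) verify that the enveloping $\Cst$-completions agree, i.e.\ that $I$-norm-bounded nondegenerate representations and $L^1(\Gamma,\Cont_0(X))$-bounded ones are the same class (one inequality is $\norm{f}_I\le\norm{f}_{L^1}$; the converse uses inductive-limit continuity), and (ii) check that the covariant pair extracted from the integrated form has the \emph{same} $\varphi$ as the representation of $(G,\alpha)$ you started with -- this is exactly the clause ``leaves the representation of $\Cont_0(X)$ unchanged'' and requires the computation $\varphi(f_0)L(f_1)=L(\rg^*(f_0)f_1)$ from Lemma~\ref{lem:disintegrate_varphi} matched against the crossed-product conventions. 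Neither point is deep, but as written your argument establishes a bijection of representation classes without pinning down that it is the bijection asserted in the theorem.
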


\begin{proof}
  A representation of~\((G,\alpha)\)
  consists of a representation~\(\varphi\)
  of \(\Cont_0(G^0)=\Cont_0(X)\) on~\(\Hilm[F]\) and a unitary operator
  \[
  U\colon \Lt^2(G^1,\s,\tilde\alpha) \otimes_{\Cont_0(X)} \Hilm[F]
  \congto \Lt^2(G^1,\rg,\alpha) \otimes_{\Cont_0(X)} \Hilm[F]
  \]
  with some properties.  We fix~\(\varphi\) because it appears in
  both types of representations that we are going to identify.
  Since \(G^1 = \Gamma\times X\),
  the Hilbert \(\Cont_0(X)\)-module
  \(\Lt^2(G^1,\s,\tilde\alpha)\)
  is isomorphic to the exterior tensor product
  \(\Lt^2(\Gamma,\tilde\alpha_0)\otimes \Cont_0(X)\);
  this corresponds to the constant field of Hilbert spaces over~\(X\)
  with fibre~\(\Lt^2(\Gamma,\tilde\alpha_0)\).  Hence we may simplify
  \[
  \Lt^2(G^1,\s,\tilde\alpha) \otimes_{\Cont_0(X)} \Hilm[F] \cong
  \Lt^2(\Gamma,\tilde\alpha_0) \otimes \Hilm[F].
  \]
  This exterior tensor product contains \(\Contc(\Gamma,\Hilm[F])\)
  as a dense subspace.  On this subspace, the representation of
  \(\Cont_0(G^1) = \Cont_0(\Gamma\times X)\) acts by
  \[
  f\cdot \xi(\gamma) = \varphi\bigl(x\mapsto f(\gamma,x)\bigr)
  \xi(\gamma)
  \]
  for all \(f\in\Cont_0(\Gamma\times X)\),
  \(\xi\in\Contc(\Gamma,\Hilm[F])\);
  that is, for fixed \(\gamma\in \Gamma\),
  \(f\)
  acts by~\(\varphi\)
  applied to the restricted function \(x\mapsto f(\gamma,x)\).

  Next we need a similar unitary operator
  \[
  \Lt^2(G^1,\rg,\alpha) \otimes_{\Cont_0(X)} \Hilm[F] \congto
  \Lt^2(\Gamma,\alpha_0)\otimes \Hilm[F].
  \]
  We construct it in three steps.  First, the inversion in~\(G\)
  induces a unitary \(\Lt^2(G^1,\rg,\alpha) \cong
  \Lt^2(G^1,\s,\tilde\alpha)\), which we may tensor
  with~\(\Hilm[F]\).  Then the unitary above maps this on to
  \(\Lt^2(\Gamma,\tilde\alpha_0) \otimes \Hilm[F]\).  Finally, the
  inversion in~\(\Gamma\) induces a unitary from
  \(\Lt^2(\Gamma,\tilde\alpha_0)\) to \(\Lt^2(\Gamma,\alpha_0)\).
  Since \((\gamma,x)^{-1} = (\gamma^{-1},\gamma x)\), this chain
  of unitaries may be described by the single homeomorphism
  \[
  \iota\colon G^1 \to \Gamma \times X,\qquad
  (\gamma,x)\mapsto (\gamma,\gamma x).
  \]
  The decomposition above shows that this homeomorphism induces a
  unitary operator
  \[
  \iota_*\colon
  \Lt^2(G^1,\rg,\alpha) \congto
  \Lt^2(\Gamma,\alpha_0)\otimes \Cont_0(X),
  \qquad f\mapsto f\circ\iota^{-1}.
  \]
  It intertwines the left actions of~\(\Cont_0(G^1)\) on
  \(\Lt^2(G^1,\rg,\alpha)\) by pointwise multiplication and on
  \(\Lt^2(\Gamma,\alpha)\otimes \Cont_0(X)\) by \((f \bullet
  \xi)(\gamma,x) = f(\gamma,\gamma x)\cdot \xi(\gamma,x)\).  The
  unitary~\(\iota_*\) induces a unitary operator
  \[
  \Lt^2(G^1,\rg,\alpha) \otimes_{\Cont_0(X)} \Hilm[F] \congto
  \Lt^2(\Gamma,\alpha_0)\otimes \Hilm[F],
  \]
  which intertwines the representations of~\(\Cont_0(G^1)\) on
  \(\Lt^2(G^1,\rg,\alpha) \otimes_{\Cont_0(X)} \Hilm[F]\) by
  pointwise multiplication and on \(\Lt^2(\Gamma,\alpha_0)\otimes
  \Hilm[F]\) by
  \[
  (f\bullet \xi)(\gamma)
  = \varphi\bigl(x\mapsto f(\gamma,\gamma x)\bigr) \xi(\gamma)
  \]
  for \(f\in\Cont_0(\Gamma\times X)\),
  \(\xi\in\Contc(\Gamma,\Hilm[F])\).

  Using the two unitaries above, we identify the unitary
  operator~\(U\)
  in a representation of~\((G,\alpha)\) with a unitary operator
  \[
  U'\colon \Lt^2(\Gamma,\tilde\alpha_0) \otimes \Hilm[F]
  \congto \Lt^2(\Gamma,\alpha_0) \otimes \Hilm[F],
  \]
  which intertwines the representations of~\(\Cont_0(\Gamma\times X)\)
  specified above and makes a certain diagram commute.  Since
  \(\Cont_0(\Gamma\times X) \cong \Cont_0(\Gamma)\otimes \Cont_0(X)\),
  the intertwining condition is equivalent to intertwining conditions
  for the representations of \(\Cont_0(\Gamma)\)
  and~\(\Cont_0(X)\)
  that we get by taking functions~\(f\)
  above that depend only on the first or second variable,
  respectively.

  We may identify
  \(\Lt^2(G^2,v_i,\mu_i)\otimes_{\Cont_0(G^0)} \Hilm[F] \cong
  \Lt^2(\Gamma^2,\mu'_i) \otimes \Hilm[F]\),
  where the measure~\(\mu_i'\)
  is constructed like~\(\mu_i\)
  but for the group~\(\Gamma\).
  The commuting diagram needed for~\(U\)
  to be a representation is equivalent to the corresponding commuting
  diagram for~\(U'\).
  Thus~\(U'\)
  is a representation of~\((\Gamma,\alpha_0)\)
  if~\(U\)
  is a representation of~\((G,\alpha)\).
  Then~\(U'\)
  comes from a continuous representation of~\(\Gamma\)
  on~\(\Hilm[F]\)
  because~\(\Cst(\Gamma)\)
  is also universal with respect to continuous representations
  of~\(\Gamma\).
  The condition that~\(U'\)
  intertwines the two representations of~\(\Cont_0(X)\)
  means that the representation~\(\varphi\)
  of~\(\Cont_0(X)\)
  is covariant with respect to the continuous representation
  of~\(\Gamma\)
  associated to~\(U'\).

  The above construction may be reversed easily.  So there is a
  bijection between representations of~\((G,\alpha)\) and covariant
  representations for the action of~\(\Gamma\) on~\(\Cont_0(X)\).
  There is a bijection between the latter and
  representations of the crossed product \(\Gamma \ltimes
  \Cont_0(X)\), see \cite{Williams:crossed-products}*{Proposition
    2.39}.  Using also Theorem~\ref{the:universal_groupoid_Haus},
  we get a bijection between representations of \(\Cst(G,\alpha)\)
  and \(\Gamma \ltimes \Cont_0(X)\) on~\(\Hilm[F]\).  All
  bijections used above have the naturality properties in
  Theorem~\ref{the:universal_groupoid_Haus}.  As in the proof of
  \cite{Meyer:Unbounded}*{Proposition~3.7}, this implies that
  there is a natural isomorphism \(\Cst(G,\alpha)\cong\Gamma
  \ltimes \Cont_0(X)\) that induces the bijection on
  representations built above.
\end{proof}

\subsection{Étale groupoids}
\label{sec:etale}

Let~\(G\) be a locally compact, Hausdorff and étale groupoid.
Endow~\(G\) with the canonical Haar system~\(\alpha\) where
each~\(\alpha^x\) is the counting measure on the discrete subset
\(G^x\subseteq G^1\).  Then~\(\tilde\alpha\) is the family of
counting measures~\(\tilde\alpha_x\) on the discrete subsets
\(G_x\subseteq G^1\).  Similarly, all the other measures
\(\lambda_i\) and~\(\mu_i\) associated to the maps \(d_i\)
and~\(v_i\) for \(i=0,1,2\) are counting measures.

We quickly recall the relationship between étale groupoids and
inverse semigroup actions on spaces, see
\cites{Exel:Inverse_combinatorial,Paterson:Groupoids}.  A
\emph{bisection} of~\(G\) is an open subset \(a\subseteq G^1\) such
that the restrictions of the source and range maps
\[
s_a\colon a\to \s(a),\qquad \rg_a\colon a\to \rg(a),
\]
are injective or, equivalently, homeomorphisms onto their images.
The bisections in~\(G\) form an inverse semigroup \(\Bis(G)\).  It
acts on~\(G^0\) by the partial homeomorphisms \(\vartheta_a =
\rg_a\circ\s_a^{-1}\colon \s(a)\congto \rg(a)\).  The transformation
groupoid \(\Bis(G)\ltimes G^0\) for this action is naturally
isomorphic to~\(G\) (see also~\cite{Exel:Partial_dynamical}).  Here
we may replace \(\Bis(G)\) by any inverse subsemigroup \(S\subseteq
\Bis(G)\) that is \emph{wide}, that is,
\begin{equation}
  \label{eq:wide_isg}
  \bigcup_{t\in S} t = G^1,\qquad
  \bigcup_{\substack{v\in S\\v \le u,t}} v = u\cap t\qquad
  \text{for all }u,t\in S.
\end{equation}
These conditions say that the groupoid
homomorphism \(S\ltimes G^0 \to \Bis(G)\ltimes G^0 \cong G^1\)
induced by the inclusion \(S\to \Bis(G)\) is bijective.  Then it is
a homeomorphism because it is always a local homeomorphism.  That
is, \(G\cong S\ltimes G^0\) as topological groupoids if and only
if~\(S\) is
wide.  We fix an inverse semigroup action~\(\vartheta\) of an inverse
semigroup~\(S\) on a locally compact space \(X=G^0\) and an
isomorphism \(G\cong S\ltimes_\vartheta X\).  The action of~\(S\)
on~\(X\) induces an action on the \cstar{}algebra \(\Cont_0(X)\) by
partial isomorphisms.  This action has a crossed
product \(S\ltimes\Cont_0(X)\).  It is already known that \(\Cst(G)
\cong S\ltimes\Cont_0(X)\), see
\cite{Exel:Inverse_combinatorial}*{Theorem~9.8}, for instance.  The
same result is also proved in \cites{Paterson:Groupoids,
  Sieben-Quigg:ActionsOfGroupoidsAndISGs,
  Buss-Meyer:Actions_groupoids,
  BussExel:Fell.Bundle.and.Twisted.Groupoids}, sometimes under mild
extra conditions.  Our universal property gives another proof,
assuming~\(G\) to be Hausdorff.  The main point is that
representations of~\(G\) are equivalent to covariant
representations of the \(S\)\nb-action on \(\Cont_0(X)\), defined as
follows:

\begin{definition}
  \label{def:cov-rep}
  Let~\(X\) be a locally compact Hausdorff space with an
  action of an inverse semigroup~\(S\) by partial homeomorphisms
  \(\vartheta_a\colon D_{a^*a}\congto D_{aa^*}\) between open subsets
  \(D_e\subseteq X\) for \(e\in E(S)\).  A \emph{covariant
    representation} of this system on a Hilbert
  \(D\)\nb-module~\(\Hilm[F]\) consists of a nondegenerate representation
  \(\varphi\colon \Cont_0(X)\to \Bound(\Hilm[F])\) and a family of
  unitaries \(U_a\colon \Hilm[F]_{a^*a}\congto \Hilm[F]_{aa^*}\) for \(a\in S\),
  where \(\Hilm[F]_e\defeq \varphi(\Cont_0(D_e))\Hilm[F]\), that satisfy the
  following conditions:
  \begin{enumerate}
  \item \label{en:cov-rep1} \(U_b\) restricts to~\(U_a\) if \(a\le
    b\) in~\(S\);
  \item \label{en:cov-rep2} \(U_a^* = U_{a^*}\) for all \(a\in S\);
  \item \label{en:cov-rep3} \(U_a U_b = U_{ab}\) for all \(a,b\in S\)
    with \(a^*a=bb^*\);
  \item \label{en:cov-rep4} \(U_a\varphi(f)U_a^* =
    \varphi(f\circ\vartheta_{a^*})\) for all \(f\in
    \Cont_0(D_{a^*a})\).
  \end{enumerate}
\end{definition}

If~\(S\) is a group, so that \(1\in S\) is the only idempotent
element, then covariant representations
as defined above are equivalent to representations of the crossed
product \(S\ltimes\Cont_0(X)\).  For inverse semigroups, it seems
that covariant representations have so far only been introduced on
Hilbert \emph{spaces}; the standard reference for this
is~\cite{Sieben:crossed.products}.  Hilbert module representations
require extra care because the Hilbert submodules~\(\Hilm[F]_e\) need not
be complementable.  For instance, let~\(S\) be a semilattice of open
subsets~\(D_e\) of~\(X\) with the intersection as product, acting
on~\(X\) by identity maps \(\vartheta_e\colon D_e\congto D_e\).  The
identity map \(\varphi\colon \Cont_0(X)\congto\Cont_0(X)\) and the
identity maps \(U_e\colon \Cont_0(D_e)\congto
\Cont_0(D_e)\) form a covariant representation of this action on
\(\Hilm[F]=\Cont_0(X)\) viewed as a Hilbert \(\Cont_0(X)\)-module.  The
submodule \(\Hilm[F]_e = \Cont_0(D_e)\) is only complementable in~\(\Hilm[F]\)
if~\(D_e\) is clopen.  Hence we cannot replace the partially defined
maps~\(U_a\) above by partial isometries on~\(\Hilm[F]\).  The following
proposition asserts that our notion of covariant representation,
when specialised to Hilbert spaces, is equivalent to Sieben's
\cite{Sieben:crossed.products}*{Definition~3.4}.

\begin{proposition}
  \label{pro:isg_covariant_rep}
  Let \((X,S,\vartheta)\) be an action of~\(S\) on~\(X\) and
  let~\(\Hils\) be a Hilbert space.  Then a covariant representation
  of~\((X,S,\vartheta)\) on~\(\Hils\) is equivalent to a nondegenerate
  representation \(\varphi\colon \Cont_0(X)\to \Bound(\Hils)\)
  together with a
  map \(S\ni a\mapsto U_a\in \Bound(\Hils)\) with
  \(U_e(\Hils)=\varphi(D_e)\Hils\) for all \(e\in E(S)\) and
  \(U_a^*=U_{a^*}\), \(U_{ab} = U_a U_b\) and
  \(\varphi(f\circ\vartheta_{a^*}) = U_a\varphi(f)U_a^*\) for all \(a,b\in
  S\) and \(f\in \Cont_0(D_{a^*a})\subseteq \Cont_0(X)\).
\end{proposition}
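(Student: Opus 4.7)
The plan is to set up a pair of mutually inverse constructions that identify the two kinds of covariant representations. On a Hilbert space---unlike on a general Hilbert module---every closed subspace is complementable, and this is the sole reason the two formulations agree. In the proposition's setting I write \(\Hils_e \defeq \varphi(\Cont_0(D_e))\Hils\) for brevity. The first observation is that in Sieben's formulation every \(U_a\) is automatically a partial isometry with initial space \(\Hils_{a^*a}\) and final space \(\Hils_{aa^*}\): from \(U_{a^*} = U_a^*\) and \(U_{ab} = U_a U_b\) one deduces \(U_a^* U_a = U_{a^*a}\), and for each idempotent \(e\in E(S)\) the operator \(U_e\) is self-adjoint with \(U_e^2 = U_e\), hence is the orthogonal projection \(P_e\) onto \(\Hils_e\).

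With this in hand the correspondence is immediate. Starting from data as in Definition~\ref{def:cov-rep} (with \(\Hilm[F]=\Hils\)), I extend each unitary \(U_a\colon \Hils_{a^*a}\congto \Hils_{aa^*}\) by zero on the orthogonal complement \(\Hils_{a^*a}^\perp\subseteq \Hils\) to get \(\tilde U_a\in \Bound(\Hils)\). Conversely, starting from Sieben's data I restrict each \(U_a\) to \(\Hils_{a^*a}\). The two constructions are visibly inverse to each other, and the adjoint identity, the covariance identity (Definition~\ref{def:cov-rep}(4)), and the identification \(\tilde U_e(\Hils) = \Hils_e\) for idempotents transfer between the two formulations with no work. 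For the order-compatibility axiom (1), note that \(a\le b\) means \(a = b\cdot a^*a\), so Sieben's multiplicativity forces \(U_a = U_b P_{a^*a}\), whose restriction to \(\Hils_{a^*a}\) is exactly the restriction of \(U_b\).

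The hard step is deriving the \emph{unrestricted} multiplicativity \(\tilde U_a \tilde U_b = \tilde U_{ab}\) for arbitrary \(a,b\in S\) from the \emph{restricted} version in Definition~\ref{def:cov-rep}(3), which only applies when \(a^*a=bb^*\). My reduction is: set \(e\defeq a^*a\cdot bb^*\in E(S)\) and \(a'\defeq ae\), \(b'\defeq eb\). A short inverse-semigroup calculation (using \(aa^*a=a\), \(bb^*b=b\), and commutativity of idempotents) gives \((a')^*a' = e = b'(b')^*\) and \(a'b' = aeb = ab\), so Definition~\ref{def:cov-rep}(3) yields \(\tilde U_{a'}\tilde U_{b'} = \tilde U_{ab}\). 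Since \(a'\le a\) and \(b'\le b\), axiom (1) together with covariance (which identifies the \(U_b\)-preimage of \(\Hils_e\) with \(\Hils_{b^*eb}=\Hils_{(b')^*b'}\)) yields \(\tilde U_{a'} = \tilde U_a P_e\) and \(\tilde U_{b'} = P_e\tilde U_b\). The crucial point is that the projections \(P_{a^*a}\) and \(P_{bb^*}\) commute because they lie in the commutative image of~\(\varphi\), and their product equals \(P_e\); consequently \((I-P_e)\tilde U_b\) takes values in \(\Hils_{bb^*}\ominus \Hils_e\), which is orthogonal to \(\Hils_{a^*a}\) and so killed by \(\tilde U_a\). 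Hence \(\tilde U_a\tilde U_b = \tilde U_a P_e\tilde U_b = \tilde U_{a'}\tilde U_{b'} = \tilde U_{ab}\).

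The main obstacle, and the reason the proposition is confined to Hilbert spaces, is precisely this multiplicativity argument: it uses both the complementability of \(\Hils_{a^*a}\) inside \(\Hils\) (to extend \(U_a\) by zero in the first place) and the commutativity of the projections \(P_e\) (to factor \(P_{a^*a}P_{bb^*}=P_e\)). Both are standard features of Hilbert-space representations of commutative \cstar{}algebras but fail on a general Hilbert module, which is why Definition~\ref{def:cov-rep} is phrased in terms of partially defined unitaries between Hilbert submodules rather than global operators on~\(\Hilm[F]\).
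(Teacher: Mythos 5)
Your proof is correct and follows essentially the same route as the paper: the reduction of unrestricted multiplicativity to the case of matching source and range via \(a'=ae\), \(b'=eb\) with \(e=a^*a\,bb^*\) is literally the paper's factorisation \(U_aU_b=U_{abb^*}U_{a^*ab}\), since \(abb^*=ae\) and \(a^*ab=eb\). The only spot where you are briefer than the paper is the identity \(P_{a^*a}P_{bb^*}=P_e\): commutativity of the projections gives that the product is the projection onto \(\Hils_{a^*a}\cap\Hils_{bb^*}\), and the paper adds the short approximate-unit argument showing \(\Hils_f\cap\Hils_g=\Hils_{fg}\), which you should include (or cite) to make that step airtight.
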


\begin{proof}
  Let \(\varphi\) and \((U_a)_{a\in S}\) be as in the statement.
  Each~\(U_a\) is a partial isometry of~\(\Hils\) with source
  projection \(U_a^* U_a = U_{a^*a}\), the orthogonal projection
  onto \(\Hils_{a^*a}\defeq \varphi(\Cont_0(D_{a^*a}))\Hils\), and range
  projection \(U_a U_a^*\), the orthogonal projection
  onto~\(\Hils_{aa^*}\).  Hence we may
  view~\(U_a\) as an isomorphism \(\Hils_{a^*a}\congto
  \Hils_{aa^*}\).  In this way, \(\varphi\) and the isomorphisms
  \(U_a\colon \Hils_{a^*a}\congto \Hils_{aa^*}\) form a covariant
  representation as in Definition~\ref{def:cov-rep}.
  Conversely, let \(\varphi\colon \Cont_0(X)\to \Bound(\Hils)\) and
  \(U_a\colon \Hils_{a^*a}\congto \Hils_{aa^*}\) for \(a\in S\) form
  a covariant representation of \((X,S,\vartheta)\) as in
  Definition~\ref{def:cov-rep}, where \(\Hils_e \defeq
  \varphi(\Cont_0(D_e))\Hils\).  Extend~\(U_a\) by zero on the
  orthogonal complement of~\(\Hils_{a^*a}\) to view it as a partial
  isometry \(U_a\in \Bound(\Hils)\).  It remains to show that \(U_a
  U_b = U_{a b}\) for all \(a,b\in S\).  We already have this
  relation if \(a^* a = b b^*\).  And we have assumed that~\(U_b\)
  restricts to~\(U_a\) if \(a\le b\).

  Let \(e,f\in E(S)\).  Then \(\Hils_{ef}\subseteq \Hils_e\cap
  \Hils_f\).  Conversely, if \(\xi \in \Hils_e\cap
  \Hils_f\), then \(\varphi(u_i^e)\xi\to \xi\) for an approximate
  unit~\((u_i^e)\) of \(\Cont_0(D_e)\) and \(\varphi(u_i^f)\xi\to
  \xi\) for an approximate unit~\((u_i^f)\) of \(\Cont_0(D_f)\).
  Since \(u_i^e \cdot u_i^f \in \Cont_0(D_{e f})\), this implies
  \(\xi \in \Hils_{e f}\).  Thus \(\Hils_e\cap \Hils_f = \Hils_{e
    f}\).  This is equivalent to \(U_e U_f = U_{e f}\).  Using this,
  the assumption that~\(U_b\) restricts to~\(U_a\) if \(a\le b\)
  implies \(U_a U_e = U_{a e}\) for all \(a\in S\), \(e\in E(S)\).
  Thus
  \[
  U_a U_b
  = U_a U_{a^* a} U_{b b^*} U_b
  = U_a U_{b b^*} U_{a^* a} U_b
  = U_{a b b^*} U_{a^* a b}
  = U_{a b b^* a^* a b}
  = U_{a b}
  \]
  for all \(a,b\in S\) because \(a (a^* a) = a\), \(b (b^* b) = b\)
  and \((a b b^*) (a^* a b) = a b\) with matching range and source
  projections.
\end{proof}

\begin{theorem}
  \label{the:isg_crossed_rep_Hilm}
  There are bijections between covariant representations of
  \((X,S,\vartheta)\)
  and representations of
  \(S\ltimes \Cont_0(X)\)
  on Hilbert modules~\(\Hilm[F]\)
  over arbitrary \cstar{}algebras~\(D\).
  These have the naturality properties in
  Theorem~\textup{\ref{the:universal_groupoid_Haus}}.
\end{theorem}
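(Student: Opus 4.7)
The plan is to prove the theorem by establishing, for any Hilbert $D$-module~$\Hilm[F]$, a natural bijection between representations of $(G,\alpha)$ on~$\Hilm[F]$ in the sense of Definition~\ref{def:representation} and covariant representations of $(X,S,\vartheta)$ on~$\Hilm[F]$ in the sense of Definition~\ref{def:cov-rep}. Combined with Theorem~\ref{the:universal_groupoid_Haus} this produces the asserted bijection with representations of~$\Cst(G,\alpha)$, and parallels the argument of Theorem~\ref{the:rep_trafo_group}: the naturality properties will then identify $\Cst(G,\alpha)$ canonically with the universal \cstar{}algebra $S\ltimes\Cont_0(X)$ for covariant representations, extending Sieben's Hilbert space construction~\cite{Sieben:crossed.products} by formal universal property.

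Given $(\varphi,U)$, I keep $\varphi$ and build the family $(U_a)_{a\in S}$ as follows. Because $G$ is étale and $a$ is a bisection, the source and range restrictions $s_a\colon a\congto D_{a^*a}$ and $r_a\colon a\congto D_{aa^*}$ are homeomorphisms, so the inclusions $\Cont_0(a)\hookrightarrow \Lt^2(G^1,\s,\tilde\alpha)$ and $\Cont_0(a)\hookrightarrow \Lt^2(G^1,\rg,\alpha)$ identify, after tensoring over~$\varphi$, the subspace $\Cont_0(a)\otimes_\varphi \Hilm[F]$ of each side with $\Hilm[F]_{a^*a}$ and $\Hilm[F]_{aa^*}$ respectively. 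Since $U$ is $\Cont_0(G^1)$-linear, it restricts to a unitary $U_a\colon\Hilm[F]_{a^*a}\congto\Hilm[F]_{aa^*}$. Axiom~\ref{en:cov-rep1} is immediate from $\Cont_0(a)\subseteq\Cont_0(b)$ when $a\le b$, and~\ref{en:cov-rep2} follows from the unitarity of~$U$ together with $s_{a^*}=r_a$, $r_{a^*}=s_a$. Axiom~\ref{en:cov-rep4} follows from the fact that $U$ intertwines pointwise multiplication by $f\circ\s$ and by $f\circ\rg$ on $\Cont_0(a)$, which become $\varphi(f)$ and $\varphi(f\circ\vartheta_{a^*})$ on the two identified subspaces via $r_a\circ s_a^{-1}=\vartheta_a$. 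The cocycle condition~\ref{en:cov-rep3} is derived from~\eqref{eq:di_U-condition}: restricting the three unitaries $d_0^*(U),d_1^*(U),d_2^*(U)$ to the open subset $a\times_{\s,\rg}b\subseteq G^2$ of composable pairs with first arrow in $a$ and second in~$b$, the maps $d_0,d_1,d_2$ land in $b$, $ab$, $a$ respectively, so~\eqref{eq:di_U-condition} becomes the identity $U_{ab}=U_a\circ U_b$ on $\Hilm[F]_{b^*b}$ after tracing through the canonical isomorphisms of Figure~\ref{fig:corr_G2_G0}.

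Conversely, given a covariant representation $(\varphi,(U_a))$, I define $U$ on the dense subspace spanned by elementary tensors $\xi\otimes\eta$ with $\supp\xi$ contained in some bisection $a\in S$, by the inverse of the identifications above. Well-definedness on overlaps uses the wide condition~\eqref{eq:wide_isg}: if $\supp\xi\subseteq a\cap b=\bigcup_{v\le a,b}v$, then a partition of unity writes~$\xi$ as a finite sum of functions supported in such sub-bisections~$v$, and axiom~\ref{en:cov-rep1} forces the definitions via $U_a$ and via $U_b$ to agree on each summand. The resulting map is $\Cont_0(G^1)$-linear and preserves inner products on this dense subspace (by~\ref{en:cov-rep2} and the covariance~\ref{en:cov-rep4}), so it extends to the required unitary~$U$. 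Condition~\eqref{eq:di_U-condition} is checked on triples of bisections $(a,b,c)\in S^3$ with $a^*a=bb^*$ and $b^*b=cc^*$, where it reduces to the identity $U_{abc}=(U_aU_b)U_c=U_a(U_bU_c)$ guaranteed by~\ref{en:cov-rep3}.

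The main obstacle will be the cocycle condition in both directions, because unpacking~\eqref{eq:di_U-condition} and how the operators $d_i^*(U)$ interact with the canonical isomorphisms of Figure~\ref{fig:corr_G2_G0} on subspaces labelled by bisections takes some care; the wideness argument~\eqref{eq:wide_isg} for well-definedness on overlaps is secondary but also essential. Once the bijection with its naturality is established, applying Theorem~\ref{the:universal_groupoid_Haus} and the formal argument of~\cite{Meyer:Unbounded}*{Proposition~3.7}, as in the proof of Theorem~\ref{the:rep_trafo_group}, yields the canonical isomorphism $\Cst(G,\alpha)\cong S\ltimes\Cont_0(X)$ and finishes the theorem.
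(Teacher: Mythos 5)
Your proposal proves the wrong statement. Theorem~\ref{the:isg_crossed_rep_Hilm} makes no mention of the groupoid \(G=S\ltimes X\) or of representations of \((G,\alpha)\): it asserts a bijection between covariant representations of \((X,S,\vartheta)\) in the sense of Definition~\ref{def:cov-rep} and representations of the \cstar{}algebra \(S\ltimes\Cont_0(X)\) on Hilbert modules. What you outline --- restricting \(U\) to bisections, analysing \eqref{eq:di_U-condition} on the subsets \(a\times_{\s,\rg}b\subseteq G^2\), and using the wideness condition~\eqref{eq:wide_isg} for well-definedness on overlaps --- is the content and proof strategy of the \emph{next} theorem, Theorem~\ref{the:etale_covariant_rep}, which in the paper \emph{uses} Theorem~\ref{the:isg_crossed_rep_Hilm} as one of its two ingredients. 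Your concluding step, that the naturality properties ``identify \(\Cst(G,\alpha)\) canonically with the universal \cstar{}algebra \(S\ltimes\Cont_0(X)\) for covariant representations,'' is circular: it presupposes that \(S\ltimes\Cont_0(X)\) represents the functor of covariant representations on Hilbert modules over arbitrary coefficient algebras, and that is precisely what this theorem claims. A priori the crossed product only has this universal property for Hilbert \emph{space} representations, via Proposition~\ref{pro:isg_covariant_rep} and Sieben's definition.

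The paper's actual proof never touches the groupoid; it works directly with the presentation of \(S\ltimes\Cont_0(X)\) by generators \(f_a\delta_a\) and relations. A covariant representation integrates to \(\varrho(f_a\delta_a)\defeq\varphi(f_a)U_a\); conversely, a representation \(\varrho\) disintegrates to \(\varphi(f)\defeq\varrho(f\delta_1)\) and \(U_a(\varrho(f\delta_{a^*a})\xi)\defeq\varrho((f\circ\vartheta_{a^*})\delta_a)\xi\). The delicate point --- which your outline does not address at all --- is that the submodules \(\Hilm[F]_e\) need not be complementable, so the \(U_a\) cannot be extended to partial isometries on all of~\(\Hilm[F]\) as in the Hilbert space case; one must instead verify by hand, using the relation \(((f_1\circ\vartheta_{a^*})\delta_a)^*\cdot(f_2\circ\vartheta_{a^*})\delta_a=(\conj f_1\cdot f_2)\delta_{a^*a}\) in the crossed product, that the formula above gives a well-defined isometry of \(\Hilm[F]_{a^*a}\) onto \(\Hilm[F]_{aa^*}\). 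Your sketch may well lead to a sound proof of Theorem~\ref{the:etale_covariant_rep}, but it does not establish the statement at hand.
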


\begin{proof}
  For Hilbert space representations, this follows from
  Proposition~\ref{pro:isg_covariant_rep} and the definition of the
  crossed product, compare~\cite{Sieben:crossed.products}.  We prove
  it for general~\(\Hilm[F]\).  It is no loss of generality to
  assume that~\(S\) has a unit element \(1\in S\) because we may
  always add a formal unit to~\(S\) and extend the action to the
  unitisation without changing the crossed product.  The crossed
  product \(S\ltimes\Cont_0(X)\) is the universal \cstar{}algebra
  generated by expressions of the form~\(f_a\delta_a\) with \(a\in
  S\) and \(f_a\in \Cont_0(D_{aa^*})\), subject to the relations
  that \(f_a\mapsto f_a\delta_a\) is a linear map
  \(\Cont_0(D_{aa^*})\to S\ltimes\Cont_0(X)\), that
  \(f\delta_a=f\delta_b\) if \(a\leq b\) in \(S\) and \(f\in
  \Cont_0(D_{aa^*})\), plus the following algebraic relations:
  \[
  (f_a\delta_a)\cdot (f_b\delta_b)
  =\bigl(((f_a\circ\vartheta_a)\cdot f_b)\circ
  \vartheta_{a^*}\bigr)\delta_{ab},\qquad
  (f_a\delta_a)^*
  = (\conj f_a\circ \vartheta_{a})\delta_{a^*}
  \]
  for all \(a,b\in S\), \(f_a\in \Cont_0(D_{aa^*})\) and \(f_b\in
  D_{bb^*}\).  A covariant representation \((\varphi,(U_a)_{a\in
    S})\) of \((X,S,\vartheta)\) on a Hilbert module~\(\Hilm[F]\) integrates
  to a representation \(U\ltimes\varphi\colon S\ltimes\Cont_0(X)\to
  \Bound(\Hilm[F])\) by \(U\ltimes\varphi(f_a\delta_a)\defeq
  \varphi(f_a)U_a\).  The covariance conditions imply that this is a
  well defined, nondegenerate \Star{}homomorphism.  Conversely, given
  a representation \(\varrho\colon S\ltimes\Cont_0(X)\to \Bound(\Hilm[F])\),
  we define \(\varphi\colon \Cont_0(X)\to \Bound(\Hilm[F])\) by
  \(\varphi(f)\defeq \varrho(f\delta_1)\).  Then~\(\varphi\) is a nondegenerate
  \Star{}homomorphism.  Given \(a\in S\), we define \(U_a\colon
  \Hilm[F]_{a^*a}\to \Hilm[F]_{aa^*}\) by \(U_a(\varrho(f\delta_{a^*a})\xi)\defeq
  \varrho((f\circ\vartheta_{a^*})\delta_a)\xi\) for all \(f\in
  \Cont_0(D_{a^*a})\) and \(\xi\in \Hilm[F]\).  By definition,
  \(\Hilm[F]_{a^*a}\) consists of elements of the form
  \(\varrho(f\delta_{a^*a})\xi=\varphi(f)\xi\) with \(f\in
  \Cont_0(D_{a^*a})\) and \(\xi\in \Hilm[F]\).  Writing~\(f\) as a product
  of two elements of \(\Cont_0(D_{a^*a})\) and using the definition
  of~\(\varphi\), we get \(\varrho((f\circ\vartheta_{a^*})\delta_a)\xi\in
  \Hilm[F]_{aa^*}\).  The relation
  \[
  ((f_1\circ\vartheta_{a^*})\delta_a)^*\cdot (f_2\circ\vartheta_{a^*})\delta_a
  = (\conj f_1\cdot f_2)\delta_{a^*a}
  \]
  holds in \(S\ltimes\Cont_0(X)\) for all \(f_1,f_2\in
  \Cont_0(D_{a^*a})\).  Hence the map~\(U_a\) is a well defined
  isometry \(\Hilm[F]_{a^*a}\to\Hilm[F]_{aa^*}\):
  \begin{multline*}
    \braket{\varrho((f_1\circ\vartheta_{a^*})\delta_a)\xi_1}
    {\varrho((f_2\circ\vartheta_{a^*})\delta_a)\xi_2}
    = \braket{\xi_1}{\varrho(((f_1\circ\vartheta_{a^*})\delta_a)^*\cdot
      (f_2\circ\vartheta_{a^*})\delta_a)\xi_2}\\
    = \braket{\xi_1}{\varrho((\conj f_1\cdot f_2)\delta_{a^*a})\xi_2}
    = \braket{\varrho(f_1\delta_{a^*a})\xi_1}{\varrho(f_2\delta_{a^*a})\xi_2}.
  \end{multline*}
  By definition, the image of~\(U_a\) is
  \[
  \varrho(\Cont_0(D_{aa^*})\delta_a)\Hilm[F] \subseteq \Hilm[F]_{aa^*}
  = \varphi(\Cont_0(D_{aa^*}))\Hilm[F]
  = \varrho(\Cont_0(D_{aa^*})\delta_{aa^*})\Hilm[F].
  \]
  Indeed, this inclusion
  is an equality so that~\(U_a\) is unitary; the other inclusion
  follows because \(\Cont_0(D_{aa^*})\delta_{aa^*} =
  (\Cont_0(D_{aa^*})\delta_a)\cdot (\Cont_0(D_{a^*a})\delta_{a^*})\)
  in \(S\ltimes\Cont_0(X)\).  By construction, \(U_a\) satisfies
  \(U_a\varphi(f)U_{a^*} = \varphi(f\circ\vartheta_{a^*})\) for all
  \(f\in \Cont_0(D_{a^*a})\).  Clearly, \(U_a^*=U_{a^*}\) and
  \(U_a\) restricts to~\(U_b\) if \(b\leq a\).  The remaining
  multiplicativity property \(U_a U_b = U_{a b}\) for \(a^* a = b
  b^*\) is also easily checked.  Therefore, \(\varphi\) and the
  partial unitaries~\(U_a\) for \(a\in S\) form a covariant
  representation of \((X,S,\vartheta)\).  By construction,
  \(U\ltimes\varphi=\varrho\).  Thus \((\varphi,U)\mapsto
  U\ltimes\varphi\) implements the desired bijection between
  covariant representations and representations of the crossed
  product \(S\ltimes\Cont_0(X)\).
  We leave it to the reader to check that these bijections have the
  naturality properties in Theorem~\ref{the:universal_groupoid_Haus}.
\end{proof}

\begin{theorem}
  \label{the:etale_covariant_rep}
  Let~\(S\) be an inverse semigroup and let~\(X\) be a locally
  compact space with an action~\(\vartheta\) of~\(S\) by partial
  homeomorphisms.  Let \(G\defeq S\ltimes X\).  Assume that this
  étale, locally compact groupoid is Hausdorff.  Let~\(\Hilm[F]\) be a
  Hilbert \(D\)\nb-module.  Representations of~\(G\) on~\(\Hilm[F]\)
  correspond bijectively to covariant representations of
  \((X,S,\vartheta)\) on~\(\Hilm[F]\), which correspond bijectively to
  representations of \(S \ltimes_\vartheta \Cont_0(X)\) on~\(\Hilm[F]\).
  These bijections have the naturality properties in
  Theorem~\textup{\ref{the:universal_groupoid_Haus}}.  So they come
  from a unique isomorphism \(\Cst(G) \cong S
  \ltimes_\vartheta \Cont_0(X)\).
\end{theorem}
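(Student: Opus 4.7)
The plan is to compose existing bijections: Theorem~\ref{the:universal_groupoid_Haus} identifies representations of $\Cst(G,\alpha)$ with representations of $(G,\alpha)$, while Theorem~\ref{the:isg_crossed_rep_Hilm} identifies covariant representations of $(X,S,\vartheta)$ with representations of $S \ltimes_\vartheta \Cont_0(X)$. It therefore suffices to exhibit a natural bijection between representations of $(G,\alpha)$ and covariant representations of $(X,S,\vartheta)$. The canonical isomorphism $\Cst(G) \cong S \ltimes_\vartheta \Cont_0(X)$ then follows by the same abstract argument as in Theorem~\ref{the:rep_trafo_group}, invoking \cite{Meyer:Unbounded}*{Proposition~3.7}.

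To build this bijection I exploit the bisection structure of~$G = S \ltimes X$. For each $a \in S$, its image $\tilde a \subseteq G^1$ is an open bisection, with $\s$ and $\rg$ restricting to homeomorphisms $\tilde a \congto D_{a^*a}$ and $\tilde a \congto D_{aa^*}$, respectively. Since~$G$ is Hausdorff and étale, extension by zero gives embeddings $\Contc(\tilde a) \hookrightarrow \Contc(G^1)$; the linear span of these subspaces is dense in $\Contc(G^1)$ in the inductive-limit topology, and because the Haar system consists of counting measures, each $\Contc(\tilde a)$ sits isometrically in both $\Lt^2(G^1,\s,\tilde\alpha)$ and $\Lt^2(G^1,\rg,\alpha)$. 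Pulling back along $\s$ (resp.\ $\rg$) identifies these copies with $\Cont_0(D_{a^*a})$ (resp.\ $\Cont_0(D_{aa^*})$) as Hilbert $\Cont_0(X)$-modules.

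Given a representation $(\varphi, U)$ of $(G,\alpha)$, the $\Cont_0(G^1)$-linearity of~$U$ forces it to send the subspace of $\Lt^2(G^1,\s,\tilde\alpha) \otimes_\varphi \Hilm[F]$ supported on~$\tilde a$ onto the corresponding subspace of $\Lt^2(G^1,\rg,\alpha) \otimes_\varphi \Hilm[F]$. Combined with the identifications above and the nondegeneracy of~$\varphi$, this restriction defines a unitary $U_a \colon \Hilm[F]_{a^*a} \congto \Hilm[F]_{aa^*}$ where $\Hilm[F]_e \defeq \varphi(\Cont_0(D_e))\Hilm[F]$. Conditions \ref{en:cov-rep1}, \ref{en:cov-rep2} and~\ref{en:cov-rep4} of Definition~\ref{def:cov-rep} follow respectively from the inclusion $\tilde a \subseteq \tilde b$ when $a \le b$, from the compatibility of~$U$ with the groupoid inversion (which swaps $\tilde a$ and~$\widetilde{a^*}$), and from applying the bimodule property of~$U$ to pullbacks $\rg^*(f)$ of functions $f \in \Cont_0(D_{aa^*})$.

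The main obstacle is multiplicativity $U_a U_b = U_{ab}$ whenever $a^*a = bb^*$, which must come from the coherence equation~\eqref{eq:di_U-condition}. The strategy is to restrict the three isomorphisms $d_i^*(U)$ to the $(\tilde a, \tilde b)$-component of the relevant subspaces of the composable pair space~$G^2$, and to read off the identity $d_1^*(U) = d_2^*(U) \circ d_0^*(U)$ on that component as exactly the required equation for $U_a U_b = U_{ab}$. The inverse direction assembles a candidate~$U$ from the family $(U_a)_{a \in S}$ using the decomposition of $\Contc(G^1)$ by bisections together with a partition of unity subordinate to the open cover $\{\tilde a\}_{a \in S}$: condition~\ref{en:cov-rep1} ensures the assembly is independent of choices, and~\ref{en:cov-rep3} reproduces the cocycle~\eqref{eq:di_U-condition}. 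Naturality with respect to isometric intertwiners and interior tensor products is immediate from the constructions.
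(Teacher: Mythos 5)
Your proposal follows essentially the same route as the paper's proof: restricting $U$ to the open bisections of $G=S\ltimes X$ to extract the partial unitaries $U_a$, reading off multiplicativity from the restriction of $d_1^*(U)=d_2^*(U)\circ d_0^*(U)$ to the open subsets $a\times_{\s,\rg}b\subseteq G^2$ (the paper's ``trisections''), reassembling $U$ from $(U_a)_{a\in S}$ via a partition of unity subordinate to the cover by bisections, and then composing with Theorems~\ref{the:universal_groupoid_Haus} and~\ref{the:isg_crossed_rep_Hilm}. The paper carries out the inner-product verification for the reassembled unitary in more detail (using that the $U_a$ agree on $\Hilm[F]_{\s(a\cap b)}$ because $S$ is wide), but the argument is the same.
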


\begin{proof}
  First we construct a covariant representation of \((X,S,\vartheta)\)
  on~\(\Hilm[F]\) from a representation \((U,\varphi)\) of \(S\ltimes X\)
  on~\(\Hilm[F]\).  Here \(\varphi\colon \Cont_0(G^0)\to \Bound(\Hilm[F])\) is a
  nondegenerate representation and~\(U\) is an isomorphism of
  \(\Cont_0(G^1)\)-\(D\)-correspondences
  \[
  U\colon \Lt^2(G^1,\s,\tilde\alpha)\otimes_\varphi\Hilm[F]\congto
  \Lt^2(G^1,\rg,\alpha)\otimes_\varphi\Hilm[F]
  \]
  with \(d_2^*(U) d_0^*(U) = d_1^*(U)\).  For an open subset
  \(e\subseteq G^0\), let \(\Hilm[F]_e\defeq \varphi(\Cont_0(e))\cdot
  \Hilm[F]\).  Any \(a\in S\) gives a bisection of~\(G^1\), namely, the
  set of all germs of pairs~\((a,x)\) with \(x\in D_{a^* a}\).  By
  abuse of notation, we also denote this bisection by~\(a\).  The
  isomorphism~\(U\) restricts to an isomorphism of
  \(\Cont_0(a)\)-\(D\)-correspondences
  \[
  U|_a\colon \Cont_0(a)\cdot
  \Lt^2(G^1,\s,\tilde\alpha)\otimes_\varphi\Hilm[F]\congto \Cont_0(a)\cdot
  \Lt^2(G^1,\rg,\alpha)\otimes_\varphi\Hilm[F].
  \]
  There are canonical isomorphisms of
  \(\Cont_0(a)\)-\(\Cont_0(G^0)\)-correspondences
  \[
  \Cont_0(a)\cdot \Lt^2(G^1,\s,\tilde\alpha)\cong \Cont_0(\s(a)),
  \qquad
  \Cont_0(a)\cdot \Lt^2(G^1,\rg,\alpha)\cong \Cont_0(\rg(a)).
  \]
  The first isomorphism sends a function \(\xi\in \Cont_0(a)\cdot
  \Contc(G^1)=\Contc(a)\) to \(\xi\circ \s_a^{-1}\in
  \Contc(\s(a))\), and similarly for the second.  Here we view the
  ideal \(\Cont_0(\s(a))\) in~\(\Cont_0(G^0)\) as a
  \(\Cont_0(a)\)-\(\Cont_0(G^0)\)-correspondence in the canonical
  way, using the homeomorphism~\(s_a\).  This gives canonical
  isomorphisms of \(\Cont_0(a)\)-\(D\)-correspondences
  \begin{align*}
    \Cont_0(a)\cdot \Lt^2(G^1,\s,\tilde\alpha)\otimes_\varphi\Hilm[F]
    &\cong \Cont_0(\s(a))\otimes_\varphi\Hilm[F]
    \cong \varphi(\Cont_0(\s(a)))\Hilm[F]
    = \Hilm[F]_{\s(a)},\\
    \Cont_0(a)\cdot \Lt^2(G^1,\rg,\alpha)\otimes_\varphi\Hilm[F]
    &\cong \Cont_0(\rg(a))\otimes_\varphi\Hilm[F]
    \cong \varphi(\Cont_0(\rg(a)))\Hilm[F]
    = \Hilm[F]_{\rg(a)}.
  \end{align*}
  Thus~\(U|_a\) becomes an isomorphism of
  \(\Cont_0(a)\)-\(D\)-correspondences
  \[
  U_a\colon \Hilm[F]_{\s(a)}\congto \Hilm[F]_{\rg(a)},
  \]
  where we view \(\Hilm[F]_{\s(a)}\) and~\(\Hilm[F]_{\rg(a)}\) as
  \(\Cont_0(a)\)-\(D\)-correspondences using the homeomorphisms
  \(\s_a\) and~\(\rg_a\).  That is,
  \[
  U_a(\varphi(f\circ s_a^{-1})\xi)
  = \varphi(f\circ \rg_a^{-1}) U_a(\xi)\qquad
  \text{for all }f\in \Cont_0(a),\ \xi\in \Hilm[F]_{\s(a)}.
  \]
  We reinterpret this as a covariance condition.  The inverse
  semigroup~\(S\) acts on the \cstar{}algebra \(\Cont_0(G^0)\) by
  the isomorphisms
  \[
  \gamma_a\colon \Cont_0(\s(a))\congto \Cont_0(\rg(a)),\qquad
  f\mapsto f\circ \vartheta_{a^*} = f\circ\s_a\circ\rg_a^{-1}.
  \]
  Thus \(U_a \varphi(f) U_a^* = \varphi(\gamma_a(f))\) for all
  \(f\in \Cont_0(\s(a))\) as in
  Definition~\ref{def:cov-rep}.\ref{en:cov-rep4}.  The conditions in
  \ref{en:cov-rep1} and~\ref{en:cov-rep2} in
  Definition~\ref{def:cov-rep} are also clear from our construction.
  We claim that the condition \(d_2^*(U)\circ d_0^*(U)=d_1^*(U)\)
  for~\(U\) to be a representation implies
  \begin{equation}
    \label{eq:rep-relations}
    U_a\circ U_b=U_{a b}\qquad
    \text{for all }a,b\in S\text{ with } a^*a=bb^*.
  \end{equation}
  Fix \(a,b\in S\) with \(a^*a=bb^*\) as above.  Let \(t\defeq
  a\times_{\s,\rg}b\subseteq G^2\) be the set of all pairs
  \((g,h)\in G^1\times G^1\) with \(g\in a\), \(h\in b\) and
  \(\s(g)=\rg(h)\).  This is an open subset of~\(G^2\), and all
  three vertex maps \(v_i\colon G^2\to G^0\), \(i=0,1,2\), are
  injective on~\(t\) because \(a\) and~\(b\) are bisections
  of~\(G\).  We call subsets of~\(G^2\) with these properties
  \emph{trisections}.  Since \(a^*a=bb^*\), we have
  \[
  v_0(t)=\rg(a)=\rg(ab),\qquad
  v_1(t)=\s(a)=\rg(b),\qquad
  v_2(t)=\s(b)=\s(ab).
  \]
  The face maps \(d_i\colon G^2\to G^1\) for \(i=0,1,2\), are also
  injective on~\(t\), and
  \[
  d_0(t)=a,\qquad
  d_1(t)=ab,\qquad
  d_2(t)=b.
  \]
  The pullback \(d_i^*(U)\colon
  \Lt^2(v_j)\otimes_{\Cont_0(G^0)}\Hilm[F]\congto
  \Lt^2(v_k)\otimes_{\Cont_0(G^0)}\Hilm[F]\) for \(i\in \{0,1,2\}\) and
  appropriate \(j,k\in \{0,1,2\}\) depending on~\(i\) is defined as
  the map
  \[
  \Id\otimes U\colon
  \Lt^2(d_i)\otimes_{\Cont_0(G^1)}\Lt^2(\s)\otimes_{\Cont_0(G^0)}\Hilm[F]\congto
  \Lt^2(d_i)\otimes_{\Cont_0(G^1)}\Lt^2(\rg)\otimes_{\Cont_0(G^0)}\Hilm[F]
  \]
  composed with the canonical isomorphisms
  \[
  \Lt^2(d_i)\otimes_{\Cont_0(G^1)}\Lt^2(\s)\cong \Lt^2(v_j),\qquad
  \Lt^2(d_i)\otimes_{\Cont_0(G^1)}\Lt^2(\rg)\cong \Lt^2(v_k).
  \]
  Therefore, the restriction of~\(d_i^*(U)\) to \(t\subseteq G^2\)
  corresponds to the restriction of \(\Id\otimes U\) to~\(t\).  As
  before, this gives an isomorphism
  \begin{multline*}
    \Cont_0(d_i(t))\cdot\Lt^2(\s)\otimes_{\Cont_0(G^0)}\Hilm[F]
    \cong \Cont_0(t)\cdot\Lt^2(d_i)\otimes_{\Cont_0(G^1)}\Lt^2(\s)
    \otimes_{\Cont_0(G^0)}\Hilm[F]
    \\\congto \Cont_0(t)\cdot
    \Lt^2(d_i)\otimes_{\Cont_0(G^1)}\Lt^2(\rg)\otimes_{\Cont_0(G^0)}\Hilm[F]
    \cong \Cont_0(d_i(t))\cdot \Lt^2(\rg)\otimes_{\Cont_0(G^0)}\Hilm[F].
  \end{multline*}
  It coincides with the restriction of~\(U\)
  to~\(d_i(t)\).
  The equality \(d_2^*(U)\circ d_0^*(U)=d_1^*(U)\)
  implies an equality for all these restrictions; in particular, it
  implies \(d_2^*(U)|_t\circ d_0^*(U)|_t = d_1^*(U)|_t\).
  This is equivalent to \(U_a\circ U_b = U_{a b}\)
  by the above identifications.  Thus~\(\varphi\)
  and the maps~\(U_a\)
  for \(a\in S\)
  form a representation of \((X,S,\vartheta)\)
  if \((\varphi,U)\) is a representation of~\(G\).

  Next we reverse this construction, showing that any covariant
  representation \(\varphi,(U_a)_{a\in S}\) comes from a unique
  representation~\((\varphi,U)\).  We continue to identify elements
  of~\(S\) with the corresponding bisections in~\(G\), which are
  open subsets of~\(G\).  First, we claim that the restrictions of
  \(U_a\) and~\(U_b\) to \(\Hilm[F]_{\s(a\cap b)}\) are equal for all
  \(a,b\in S\).  Definition~\ref{def:cov-rep}.\ref{en:cov-rep1}
  implies that \(U_a\) and~\(U_b\) coincide on \(\Hilm[F]_{\s(c)}\) for
  each \(c\in S\) with \(c \le a,b\).  Since~\(S\) is a wide inverse
  semigroup in~\(\Bis(G)\), \(a\cap b\) is the union of the
  bisections \(c\in S\) with \(c\le a,b\).  Thus \(\Hilm[F]_{\s(a\cap b)}
  = \sum_{c\le a,b} \Hilm[F]_{\s(c)}\), and we get the claim.

  Extending a function on \(a\subseteq G^1\)
  by~\(0\) and summing, we map \(\bigoplus_{a\in S} \Contc(a)\) to
  \(\Contc(G^1)\).  A partition of unity argument shows that this
  map to \(\Contc(G^1)\) is surjective.  We define similar maps
  \begin{align*}
    \tau_\s\colon \bigoplus_{a\in S} \Contc(a)\odot \Hilm[F]_{\s(a)} &\to
    \Lt^2(G^1,\s,\tilde\alpha)\otimes_{\Cont_0(G^0)}\Hilm[F],\\
    \tau_\rg\colon \bigoplus_{a\in S} \Contc(a)\odot \Hilm[F]_{\rg(a)} &\to
    \Lt^2(G^1,\rg,\alpha)\otimes_{\Cont_0(G^0)}\Hilm[F].
  \end{align*}
  We claim that both have dense range.  If we replaced
  \(\Hilm[F]_{\s(a)}\) and~\(\Hilm[F]_{\rg(a)}\) by~\(\Hilm[F]\), this would follow from
  the density of \(\Contc(G^1)\odot \Hilm[F]\) in the right hand sides.
  Since \(\Contc(a) = \Contc(a)\cdot\Contc(a)\), we may rewrite the
  image of \(f\otimes \xi\) for \(f\in\Contc(a)\), \(\xi\in\Hilm[F]\) as
  \(f_1\cdot f_2 \otimes \xi \equiv f_1\otimes f_2\cdot\xi\) with
  \(f_1,f_2\in \Contc(a)\) and hence \(f_2\cdot\xi\in \Hilm[F]_{\s(a)}\)
  when we work in
  \(\Lt^2(G^1,\s,\tilde\alpha)\otimes_{\Cont_0(G^0)}\Hilm[F]\) and
  \(f_2\cdot\xi\in \Hilm[F]_{\rg(a)}\) when we work in
  \(\Lt^2(G^1,\rg,\alpha)\otimes_{\Cont_0(G^0)}\Hilm[F]\).  The
  maps~\((U_a)_{a\in S}\) give an isomorphism
  \[
  U\colon \bigoplus_{a\in S} \Contc(a)\odot \Hilm[F]_{\s(a)} \congto
  \bigoplus_{a\in S} \Contc(a)\odot \Hilm[F]_{\rg(a)},\qquad
  (f_a \otimes \xi_a)_{a\in S} \mapsto
  (f_a \otimes U_a(\xi_a))_{a\in S}.
  \]
  We claim that \(\braket{\tau_\rg\circ U(x)}{\tau_\rg\circ U(y)} =
  \braket{\tau_\s(x)}{\tau_\s(y)}\) for all \(x,y\in \bigoplus_{a\in
    S} \Contc(a)\odot \Hilm[F]_{\s(a)}\).  Hence there is a unique unitary
  operator
  \[
  \bar{U}\colon \Lt^2(G^1,\s,\tilde\alpha)\otimes_{\Cont_0(G^0)}\Hilm[F] \congto
  \Lt^2(G^1,\rg,\alpha)\otimes_{\Cont_0(G^0)}\Hilm[F]
  \]
  with \(\bar{U}\circ \tau_\s = \tau_\rg\circ U\).  It suffices to
  prove the claim if \(x=f_1\otimes \xi_1\) and \(y=f_2\otimes
  \xi_2\) with \(f_1\in \Contc(a)\), \(\xi_1\in\Hilm[F]_{\s(a)}\),
  \(f_2\in \Contc(b)\), \(\xi_2\in\Hilm[F]_{\s(b)}\) for some \(a,b\in
  S\).  So we must prove that
  \[
  \braket{\xi_1}{\braket{f_1}{f_2}_{\s} \xi_2}
  = \braket{U_a(\xi_1)}{\braket{f_1}{f_2}_{\rg} U_b(\xi_2)}.
  \]
  The function \(\braket{f_1}{f_2}_{\s}\) is simply the function
  \(\s(x)\mapsto f_1(x)\cdot \conj{f_2(x)}\) for \(x\in a\cap b\),
  extended by~\(0\) outside \(\s(a\cap b)\), and
  \(\braket{f_1}{f_2}_{\rg} = \braket{f_1}{f_2}_{\s} \circ
  \vartheta_a^*\).  We may rewrite \(\braket{f_1}{f_2}_\s =
  \conj{f_3}\cdot f_4\) with \(f_3,f_4\in \Contc(\s(a\cap b))\).
  Hence \(\braket{\xi_1}{\braket{f_1}{f_2}_{\s} \xi_2} = \braket{f_3
    \xi_1}{f_4 \xi_2}\) and
  \begin{multline*}
    \braket{U_a(\xi_1)}{\braket{f_1}{f_2}_{\rg} U_b(\xi_2)}
    = \braket{U_a(\xi_1)}
    {(\braket{f_1}{f_2}_{\s} \circ \vartheta_b^*) U_b(\xi_2)}
    = \braket{(f_3\circ\vartheta_a^*) U_a(\xi_1)}
    {(f_4\circ\vartheta_b^*) U_b(\xi_2)}
    \\= \braket{U_a(f_3 \xi_1)}{U_b(f_4 \xi_2)}
    = \braket{U_a(f_3 \xi_1)}{U_a(f_4 \xi_2)}
    = \braket{f_3 \xi_1}{f_4 \xi_2}.
  \end{multline*}
  Here we use that \(\vartheta_a\) and~\(\vartheta_b\) agree on \(\s(a\cap
  b)\) and hence on the supports of \(f_3,f_4\), and that \(U_a\)
  and~\(U_b\) agree on \(\Hilm[F]_{\s(a\cap b)}\) and are unitary.  The
  computation above proves our claim that \(\braket{\tau_\rg\circ
    U(x)}{\tau_\rg\circ U(y)} = \braket{\tau_\s(x)}{\tau_\s(y)}\)
  for all \(x,y\in \bigoplus_{a\in S} \Contc(a)\odot \Hilm[F]_{\s(a)}\).
  This finishes the construction of the unitary~\(\bar{U}\).

  The unitary~\(\bar{U}\) acts
  by~\(U_a\) on \(\Contc(a)\cdot \Lt^2(G^1,\s,\tilde\alpha)
  \otimes_{\Cont_0(X)} \Hilm[F]\) because the latter is the closure of the
  \(\tau_{\s}\)\nb-image of \(\Contc(a)\otimes \Hilm[F]_{\s(a)}\).  The
  covariance condition~\ref{en:cov-rep4} in
  Definition~\ref{def:cov-rep} for the unitaries~\(U_a\) says
  that~\(\bar{U}\) intertwines the left actions of \(\Cont_0(G^1)\),
  that is, it is an isomorphism of correspondences.  Since the
  trisections described above cover~\(G^2\), the equality
  \(d_2^*(U)\circ d_0^*(U)=d_1^*(U)\) follows
  from~\ref{eq:rep-relations} by reversing the computation above.
  Thus~\(\bar{U}\) is a representation of~\(G\).  This finishes the
  proof of the bijection between representations of~\(G\)
  and covariant representations of \((X,S,\vartheta)\).

  Finally, we check that our bijection between representations
  of~\(G\) and covariant representations of
  \((\Cont_0(X),S,\vartheta)\) also satisfies the naturality properties
  in Theorem~\ref{the:universal_groupoid_Haus}.
  Let \(V\colon \Hilm[F]\hookrightarrow\F'\) be a Hilbert module isometry.
  Let \(\Hilm[F]\) and~\(\F'\) carry representations \((\varphi,U)\) and
  \((\varphi',U')\) of~\(G\).  If~\(V\) intertwines these
  representations, then it maps \(\Hilm[F]_{a^* a}\) into~\(\F'_{a^* a}\)
  for all \(a\in S\), and the restricted isometries \(\Hilm[F]_{a^* a}
  \hookrightarrow \F'_{a^* a}\) and \(\Hilm[F]_{a a^*} \hookrightarrow
  \F'_{a a^*}\) intertwine the unitaries \(U_a\colon \Hilm[F]_{a^* a}
  \congto \Hilm[F]_{a a^*}\) and \(U'_a\colon \F'_{a^* a} \to \F'_{a
    a^*}\).  Thus~\(V\) intertwines the covariant
  representations of~\((X,S,\vartheta)\) associated to~\((\varphi,U)\).
  Conversely, if~\(V\) intertwines \(\varphi\) and~\((U_a)_{a\in
    S}\) and the corresponding family \(\varphi'\)
  and~\((U'_a)_{a\in S}\), then it must intertwine \(U\) and~\(U'\)
  because we may reconstruct~\(U\) from~\((U_a)_{a\in S}\) as above.
  Thus our bijection has the first naturality property in
  Theorem~\ref{the:universal_groupoid_Haus}.  The second naturality
  property is also routine to check.  Since the bijection between
  representations of \(\Cst(G)\) and \(S\ltimes_\vartheta\Cont_0(X)\)
  has these two naturality properties, it is induced by an
  isomorphism \(\Cst(G)\cong S\ltimes_\vartheta\Cont_0(X)\).
\end{proof}

\begin{bibdiv}
  \begin{biblist}
\bib{Androulidakis-Skandalis:Holonomy}{article}{
  author={Androulidakis, Iakovos},
  author={Skandalis, Georges},
  title={The holonomy groupoid of a singular foliation},
  journal={J. Reine Angew. Math.},
  volume={626},
  date={2009},
  pages={1--37},
  issn={0075-4102},
  review={\MR {2492988}},
  doi={10.1515/CRELLE.2009.001},
}

\bib{BussExel:Fell.Bundle.and.Twisted.Groupoids}{article}{
  author={Buss, Alcides},
  author={Exel, Ruy},
  title={Fell bundles over inverse semigroups and twisted \'etale groupoids},
  journal={J. Operator Theory},
  volume={67},
  date={2012},
  number={1},
  pages={153--205},
  issn={0379-4024},
  review={\MR {2881538}},
  eprint={http://www.theta.ro/jot/archive/2012-067-001/2012-067-001-007.html},
}

\bib{Buss-Meyer:Actions_groupoids}{article}{
  author={Buss, Alcides},
  author={Meyer, Ralf},
  title={Inverse semigroup actions on groupoids},
  journal={Rocky Mountain J. Math.},
  issn={0035-7596},
  date={2017},
  volume={47},
  number={1},
  pages={53--159},
  doi={10.1216/RMJ-2017-47-1-53},
  review={\MR {3619758}},
}

\bib{Dixmier:Cstar-algebres}{book}{
  author={Dixmier, Jacques},
  title={Les $\textup C^*$\nobreakdash -alg\`ebres et leurs repr\'esentations},
  series={Deuxi\`eme \'edition. Cahiers Scientifiques, Fasc. XXIX},
  publisher={Gauthier-Villars \'Editeur, Paris},
  date={1969},
  pages={xv+390},
  review={\MR {0246136}},
}

\bib{Exel:Inverse_combinatorial}{article}{
  author={Exel, Ruy},
  title={Inverse semigroups and combinatorial $C^*$\nobreakdash -algebras},
  journal={Bull. Braz. Math. Soc. (N.S.)},
  volume={39},
  date={2008},
  number={2},
  pages={191--313},
  issn={1678-7544},
  review={\MR {2419901}},
  doi={10.1007/s00574-008-0080-7},
}

\bib{Exel:Partial_dynamical}{book}{
  author={Exel, Ruy},
  title={Partial dynamical systems, Fell bundles and applications},
  series={Mathematical Surveys and Monographs},
  volume={224},
  date={2017},
  pages={321},
  isbn={978-1-4704-3785-5},
  isbn={978-1-4704-4236-1},
  publisher={Amer. Math. Soc.},
  place={Providence, RI},
}

\bib{Holkar:Thesis}{thesis}{
  author={Holkar, Rohit Dilip},
  title={Topological construction of \(\textup {C}^*\)\nobreakdash -correspondences for groupoid \(\textup {C}^*\)\nobreakdash -algebras},
  type={phdthesis},
  institution={Georg-August-Universit\"at G\"ottingen},
  date={2014},
  eprint={http://hdl.handle.net/11858/00-1735-0000-0023-960F-3},
}

\bib{Kadison-Ringrose:Fundamentals_2}{book}{
  author={Kadison, Richard V.},
  author={Ringrose, John R.},
  title={Fundamentals of the theory of operator algebras. Vol. II},
  series={Graduate Studies in Mathematics},
  volume={16},
  note={Advanced theory; Corrected reprint of the 1986 original},
  publisher={Amer. Math. Soc.},
  place={Providence, RI},
  date={1997},
  pages={i--xxii and 399--1074},
  isbn={0-8218-0820-6},
  review={\MR {1468230}},
}

\bib{Khoshkam-Skandalis:Crossed_inverse_semigroup}{article}{
  author={Khoshkam, Mahmood},
  author={Skandalis, Georges},
  title={Crossed products of $C^*$\nobreakdash -algebras by groupoids and inverse semigroups},
  journal={J. Operator Theory},
  volume={51},
  date={2004},
  number={2},
  pages={255--279},
  issn={0379-4024},
  review={\MR {2074181}},
  eprint={http://www.theta.ro/jot/archive/2004-051-002/2004-051-002-003.html},
}

\bib{Lance:Hilbert_modules}{book}{
  author={Lance, E. {Ch}ristopher},
  title={Hilbert $C^*$\nobreakdash -modules},
  series={London Mathematical Society Lecture Note Series},
  volume={210},
  publisher={Cambridge University Press},
  place={Cambridge},
  date={1995},
  pages={x+130},
  isbn={0-521-47910-X},
  review={\MR {1325694}},
  doi={10.1017/CBO9780511526206},
}

\bib{Meyer:Unbounded}{article}{
  author={Meyer, Ralf},
  title={Representations by unbounded operators: \(\textup C^*\)\nobreakdash -hulls, local-global principle, and induction},
  journal={Doc. Math.},
  date={2017},
  volume={22},
  pages={1375--1466},
  issn={1431-0635},
  eprint={https://www.math.uni-bielefeld.de/documenta/vol-22/44.html},
}

\bib{Muhly-Tomforde:Quivers}{article}{
  author={Muhly, Paul S.},
  author={Tomforde, Mark},
  title={Topological quivers},
  journal={Internat. J. Math.},
  volume={16},
  date={2005},
  number={7},
  pages={693--755},
  issn={0129-167X},
  review={\MR {2158956}},
  doi={10.1142/S0129167X05003077},
}

\bib{Muhly-Renault-Williams:Equivalence}{article}{
  author={Muhly, Paul S.},
  author={Renault, Jean N.},
  author={Williams, Dana P.},
  title={Equivalence and isomorphism for groupoid \(C^*\)\nobreakdash -algebras},
  journal={J. Operator Theory},
  volume={17},
  date={1987},
  number={1},
  pages={3--22},
  issn={0379-4024},
  review={\MR {873460}},
  eprint={http://www.theta.ro/jot/archive/1987-017-001/1987-017-001-001.html},
}

\bib{Paterson:Groupoids}{book}{
  author={Paterson, Alan L. T.},
  title={Groupoids, inverse semigroups, and their operator algebras},
  series={Progress in Mathematics},
  volume={170},
  publisher={Birkh\"auser Boston Inc.},
  place={Boston, MA},
  date={1999},
  pages={xvi+274},
  isbn={0-8176-4051-7},
  review={\MR {1724106}},
  doi={10.1007/978-1-4612-1774-9},
}

\bib{Sieben-Quigg:ActionsOfGroupoidsAndISGs}{article}{
  author={Quigg, John},
  author={Sieben, N\'andor},
  title={$C^*$\nobreakdash -actions of $r$\nobreakdash -discrete groupoids and inverse semigroups},
  journal={J. Austral. Math. Soc. Ser. A},
  volume={66},
  date={1999},
  number={2},
  pages={143--167},
  issn={0263-6115},
  review={\MR {1671944}},
  doi={10.1017/S1446788700039288},
}

\bib{Ramsay:Topologies}{article}{
  author={Ramsay, Arlan},
  title={Topologies on measured groupoids},
  journal={J. Funct. Anal.},
  volume={47},
  date={1982},
  number={3},
  pages={314--343},
  issn={0022-1236},
  review={\MR {665021}},
  doi={10.1016/0022-1236(82)90110-0},
}

\bib{Renault:Groupoid_Cstar}{book}{
  author={Renault, Jean},
  title={A groupoid approach to $\textup C^*$\nobreakdash -algebras},
  series={Lecture Notes in Mathematics},
  volume={793},
  publisher={Springer},
  place={Berlin},
  date={1980},
  pages={ii+160},
  isbn={3-540-09977-8},
  review={\MR {584266}},
  doi={10.1007/BFb0091072},
}

\bib{Renault:Representations}{article}{
  author={Renault, Jean},
  title={Repr\'esentation des produits crois\'es d'alg\`ebres de groupo\"\i des},
  journal={J. Operator Theory},
  volume={18},
  date={1987},
  number={1},
  pages={67--97},
  issn={0379-4024},
  review={\MR {912813}},
  eprint={http://www.theta.ro/jot/archive/1987-018-001/1987-018-001-005.html},
}

\bib{Rieffel:Morita}{article}{
  author={Rieffel, Marc A.},
  title={Morita equivalence for $C^*$\nobreakdash -algebras and $W^*$\nobreakdash -algebras},
  journal={J. Pure Appl. Algebra},
  volume={5},
  date={1974},
  pages={51--96},
  issn={0022-4049},
  review={\MR {0367670}},
  doi={10.1016/0022-4049(74)90003-6},
}

\bib{Rosendal:Automatic}{article}{
  author={Rosendal, Christian},
  title={Automatic continuity of group homomorphisms},
  journal={Bull. Symbolic Logic},
  volume={15},
  year={2009},
  number={2},
  pages={184--214},
  issn={1079-8986},
  review={\MR {2535429}},
  doi={10.2178/bsl/1243948486},
}

\bib{Sieben:crossed.products}{article}{
  author={Sieben, N\'andor},
  title={$C^*$\nobreakdash -crossed products by partial actions and actions of inverse semigroups},
  journal={J. Austral. Math. Soc. Ser. A},
  volume={63},
  date={1997},
  number={1},
  pages={32--46},
  issn={0263-6115},
  review={\MR {1456588}},
  doi={10.1017/S1446788700000306},
}

\bib{Weaver:book}{book}{
  author={Weaver, Nik},
  title={Mathematical quantization},
  series={Studies in Advanced Mathematics},
  publisher={Chapman \& Hall/CRC, Boca Raton, FL},
  year={2001},
  pages={xii+278},
  isbn={1-58488-001-5},
  review={\MR {1847992}},
  doi={10.1201/9781420036237},
}

\bib{Williams:crossed-products}{book}{
  author={Williams, Dana P.},
  title={Crossed products of $C^*$\nobreakdash -algebras},
  series={Mathematical Surveys and Monographs},
  volume={134},
  publisher={Amer. Math. Soc.},
  place={Providence, RI},
  date={2007},
  pages={xvi+528},
  isbn={978-0-8218-4242-3; 0-8218-4242-0},
  review={\MR {2288954}},
  doi={10.1090/surv/134},
}
  \end{biblist}
\end{bibdiv}

\end{document}